\title{Ergodic pairs for degenerate pseudo Pucci's fully nonlinear operators} 
\author{ F. Demengel}
\date{}
\newtheorem{theorem}{Theorem}[section]
\newtheorem{proposition}{Proposition}
\newtheorem{remark}{Remark}
\newtheorem{lemma}{Lemma}
\def\grad{\nabla}
\begin{document}
\maketitle

\begin{abstract}
We study the ergodic problem for fully nonlinear elliptic  operators  $F( \nabla u, D^2 u)$ which may be degenerate when at least one of the components of the gradient  vanishes.  We extend  here the results in \cite{LL}, \cite{CDLP}, \cite{BDL2}, \cite{LP}.

\end{abstract}
\section{Introduction} 
This article deals with the existence of solutions to the ergodic problem associated to the "pseudo Pucci's" operators. 

 The history of the ergodic problem begins with the seminal paper of Lasry and Lions in 1989, \cite{LL} which considers the Laplacian case. More precisely $\Omega$ being an open  bounded $ \mathcal{C}^2$ domain in $ \mathbb R^N$, for $\beta \in ]1, 2]$ and $f$   being continuous in $\Omega$ and bounded,  $(u, c)$ is a solution of the ergodic problem  if 
  $$ \left\{ \begin{array}{lc}
  -\Delta u + | \nabla u |^\beta = f+ c& {\rm  in}  \ \Omega\\
   u = +\infty & {\rm on } \ \partial \Omega\end{array}\right.$$
   
   The results in \cite{LL} are extended to the  case of the $p$-Laplace operator by Leonori  and Porretta in \cite{LP}. In \cite{CDLP} the authors consider the case where the Laplacian  is replaced by $-{\rm div} ( A(x) \nabla u)$ where $A$   is positively definite and regular enough.   Recently  in  \cite {BDL2}, we considered  the case where the leading term is Fully Non Linear  elliptic, singular or degenerate, on the model of 
   $-| \nabla u |^\alpha F( D^2 u)$, where $\alpha >-1$ and $F$ is fully non linear elliptic and positively homogeneous of degree $1$. 
   
   \medskip
   
    In the present  paper, we will assume that 
   
   -There \ exist \ some \ constants \ $a< A$,  so\  that for \ any $M\in  S, N\in S, N \geq 0,$
    \begin{equation}\label{fnl}
  a tr(N) \leq F( M+ N)-F( M) \leq A tr N,
          \end{equation} 
      where $S$ is the space of symmetric matrices on $\mathbb R^N$.      We will also assume that 
      
      \begin{equation}\label{posh}
      F ( tM) =  t F( M), \ \forall t>0, \ M\in S.
      \end{equation} 
 We define  the operator ( with an obvious abuse of notations) 
   \begin{equation}
   \label{defF}
    F( \nabla u, D^2 u )=  F ( \Theta_\alpha (\nabla u ) D^2 u  \Theta_\alpha (\nabla u)), \  {\rm  where} \   \Theta_\alpha ( p) := {\rm Diag} ( |p_i|^{ \alpha\over 2})
   \end{equation} 
  $\alpha \geq 0$,  and $F$ satisfies \eqref{fnl}, \eqref{posh},   and we are interested  in the following  : 
  
     Let $\beta \in ]\alpha+1, \alpha+2]$,  find 
     $(u, c)$  which is a solution of the "ergodic problem " 
  $$ \left\{ \begin{array}{lc}
  -F( \nabla u, D^2 u ) + | \nabla u |^\beta = f+ c& {\rm in}  \ \Omega\\
   u = +\infty & {\rm on } \ \partial \Omega.\end{array}\right.$$
       Note that this equation presents a new type of degeneracy with respect to the equations in \cite{BDL2}, since the  leading term degenerates on every point where at least one derivative $\partial_i u$ is zero. 
     When $F( X) = tr X$, the operator is nothing else than the anisotropic $p$-Laplacian  for $p= \alpha+2$  ( also  called pseudo $p$-Laplacian or "orthotropic Laplacian"). Let us recall that  the    equation of the "anisotropic $p$-Laplacian",  is 
      \begin{equation}\label{plap}
      -\sum_i \partial_i ( | \partial_i u |^{p-2} \partial_i u) = f. 
      \end{equation}
     This equation can easily be solvable, for convenient $f$,  by standard methods in the calculus of variations. But  the regularity results are much more difficult to obtain. Lipschitz regularity is proved in the singular case in \cite{FFM} while the case $p>2$ is treated in \cite{BC},  \cite{BBJ} for a more degenerate equation  including the pseudo $p$Laplacian case. In \cite{D1}, \cite{BD2}  we considered  viscosity solutions for the  fully non linear extension of the pseudo$p$-Laplacian, say the  case where  in \eqref{plap}, the left hand side is replaced by  
       $-F ( \Theta_\alpha (\nabla u ) D^2 u  \Theta_\alpha (\nabla u))$, and $\alpha >0$. More general anisotropic  fully non linear degeneracy is treated in \cite{D2}.  In the variational case, one important   and recent  result  can be found in \cite{BB2}. 
       
        In  \cite{BD2} the Lipschitz interior regularity of the solutions is obtained as a corollary of the following classical estimate between $u$ sub-solution  and  $v$ super-solution of the  equation with some eventually different right hand side, say  :  If $\Omega = B(0,1)$, for all $r>0$, $r<1$, there exists $c_r$ so that for all $(x,y) \in B(0, r)^2$
        $$ u(x)-v(y) \leq \sup ( u-v) + c_r | x-y|.$$
        This Lipschitz estimate is extended  in the present paper to the equations presenting an Hamiltonian of the form $b(x) | \nabla u |^\beta$ with $\beta \in [\alpha+1, \alpha+2]$ when the sub-and super-solutions are bounded. This is done in Section 2. 
      This estimate does not permit to prove existence's results for the ergodic problem : This  existence is generally obtained by passing to the limit in an equation with boundary conditions  coming to $+\infty$, and then requires H\"older's  or Lipschitz estimates for  globally unbounded  solutions, ( though locally uniformly bounded).  However, as in \cite{LL}, \cite{CDLP}, \cite{BDL2}, the presence of an Hamiltonian  "superlinear" with a good sign,  permits to get an interior Lipschitz estimates for the \underline{solutions}, which does not require  that  the solution be bounded, but  that the zero order term   be so. This is done in Section \ref{secLipuni}. 
        
            One of the results of this paper is  : 
         
           \begin{theorem}\label{sympaetpas} Suppose that $f$ is  bounded and locally Lipschitz continuous  in $\Omega$, and that $F$ satisfies \eqref{fnl}, \eqref{posh} and  \eqref{defF}, and $\alpha \geq 0$. Consider the
Dirichlet problems 
             \begin{equation}\label{eq3}
             \left\{ \begin{array}{cl}
                     -F(\nabla u, D^2 u) + |\nabla u |^\beta   = f & \hbox{ in } \ \Omega \\
                     u=0 & \hbox{ on } \ \partial \Omega ,
                     \end{array}\right.
                     \end{equation}
and,  for $\lambda >0$, 
\begin{equation}\label{eq45}
\left\{ \begin{array}{cl}
                     - F(\nabla u , D^2 u) + |\nabla u |^\beta   + \lambda |u|^\alpha u  = f & \hbox{ in } \ \Omega \\
                     u=0 &  \hbox{ on } \ \partial \Omega .
                     \end{array}\right.
                     \end{equation}
The following alternative holds : 
\begin{enumerate}
\item Suppose that there exists a bounded sub-solution of \eqref{eq3}.
 Then the solution $u_\lambda$ of  \eqref{eq45}                      
satisfies: $(u_\lambda)$ is bounded and  uniformly converging up to a sequence $\lambda_n\to 0$ to  a  solution of (\ref{eq3}).
\item Suppose that there is  no solution for the Dirichlet problem \eqref{eq3}. Suppose in addition that $\alpha \geq 2$. 
 Then, $(u_\lambda)$  satisfies, up to a sequence $\lambda_n\to 0$ and locally uniformly in $\Omega$,
\begin{enumerate}
\item\label{a}  $u_\lambda\rightarrow -\infty$; 
\item\label{b} there exists a constant $c_\Omega\geq 0$ such that $\lambda |u_\lambda|^\alpha u_\lambda\to -c_\Omega;$
\item \label{c}  $c_\Omega $ is an ergodic constant  and  $v_\lambda=u_\lambda+|u_\lambda|_\infty$ converges  to  a solution of the ergodic problem    
\begin{equation}\label{ergodic}
\left\{ \begin{array}{cl}
                     - F(\nabla v, D^2 v) + |\nabla v |^\beta   = f +  c_\Omega  & \hbox{ in} \ \Omega \\
                     v=+\infty &  \hbox{ on} \ \partial \Omega 
                     \end{array}\right.
\end{equation}
whose minimum is zero.
\end{enumerate}
\end{enumerate}
\end{theorem}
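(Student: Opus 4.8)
The plan is to combine three ingredients — maximum--principle bounds on the zero--order term $\lambda|u_\lambda|^\alpha u_\lambda$, the uniform interior Lipschitz estimate of Section~\ref{secLipuni}, and the stability of viscosity sub/supersolutions under locally uniform convergence — supplemented by a few explicit barriers. \emph{Step~1 (uniform a priori estimates).} Testing \eqref{eq45} from above (resp.\ below) with a constant at an interior maximum (resp.\ minimum) $x$ of $u_\lambda$: since $\Theta_\alpha(0)=0$ and $F(0)=0$, both the leading term and $|\nabla u_\lambda|^\beta$ drop out, leaving $\lambda|u_\lambda|^\alpha u_\lambda(x)\le f(x)$ (resp.\ $\ge f(x)$); as $t\mapsto|t|^\alpha t$ is increasing and $u_\lambda=0$ on $\partial\Omega$, this gives $\|\lambda|u_\lambda|^\alpha u_\lambda\|_{L^\infty(\Omega)}\le\|f\|_\infty$ uniformly in $\lambda$. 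Hence $u_\lambda$ solves $-F(\nabla u_\lambda,D^2u_\lambda)+|\nabla u_\lambda|^\beta=g_\lambda$ with $\|g_\lambda\|_\infty\le2\|f\|_\infty$, and the estimate of Section~\ref{secLipuni} yields, for every open $\omega$ with $\bar\omega\subset\Omega$, a bound $\|\nabla u_\lambda\|_{L^\infty(\omega)}\le C(\omega)$ independent of $\lambda$. Fixing $R$ with $\Omega\subset\{|x_1|<R\}$, the function $\psi(x)=C(e^{R}-e^{x_1})$ is, by \eqref{fnl} and for $C$ large depending only on $\|f\|_\infty,R,a,\alpha,\beta$, a strict supersolution of $-F(\nabla w,D^2w)+|\nabla w|^\beta=2\|f\|_\infty$ with $\psi\ge0$ on $\partial\Omega$, so comparison with the (possibly degenerate) subsolution $u_\lambda$ gives the \emph{uniform one--sided bound} $u_\lambda\le\psi\le Ce^{R}$ on $\Omega$. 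Finally, barriers built from the $\mathcal C^2$--regularity of $\partial\Omega$ (as in \cite{LL,CDLP,BDL2}) give, on any subfamily along which $u_\lambda$ is uniformly bounded, equicontinuity up to $\partial\Omega$, and confine the interior minima of $u_\lambda$ to a fixed compact subset of $\Omega$.

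\emph{Step~2 (the basic dichotomy and part~1).} If $\liminf_{\lambda\to0}|u_\lambda|_\infty<\infty$, then along a subsequence $(u_\lambda)$ is uniformly bounded, so $g_\lambda\to f$ uniformly; by Step~1 and Ascoli one extracts $u_\lambda\to u$ uniformly on $\bar\Omega$, and stability of viscosity solutions shows $u$ solves \eqref{eq3}. This proves the contrapositive of the hypothesis of case~2, and reduces case~1 to showing $(u_\lambda)$ uniformly bounded. Under the hypothesis of part~1, let $\underline u$ be a bounded subsolution of \eqref{eq3}; replacing it by $\underline u-\max(0,\sup_\Omega\underline u)$ — still a subsolution of \eqref{eq3} (no zero--order term), now $\le0$, hence a subsolution of \eqref{eq45} — the comparison principle for \eqref{eq45} (valid thanks to the monotone term $\lambda|u|^\alpha u$ and the coercive Hamiltonian $|\nabla u|^\beta$) gives $u_\lambda\ge\underline u\ge\inf_\Omega\underline u$; together with $u_\lambda\le Ce^R$ from Step~1, $(u_\lambda)$ is uniformly bounded and the preceding paragraph yields part~1.

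\emph{Step~3 (part~2).} Assume no solution of \eqref{eq3} exists; by Step~2, $|u_\lambda|_\infty\to\infty$, while $\sup_\Omega u_\lambda\le Ce^R$, so $\inf_\Omega u_\lambda=-|u_\lambda|_\infty=:-\mu_\lambda\to-\infty$. The interior--minimum test of Step~1 gives $\lambda\mu_\lambda^{\alpha+1}\le\|f\|_\infty$, so along a subsequence $\lambda\mu_\lambda^{\alpha+1}\to c_\Omega\in[0,\|f\|_\infty]$. Put $v_\lambda=u_\lambda+\mu_\lambda\ge0$; it has the same gradient as $u_\lambda$, hence is uniformly locally Lipschitz, and its minimum $0$ is attained on a fixed compact (Step~1), so along a further subsequence $v_\lambda\to v$ locally uniformly with $v\ge0$ and $\min v=0$; then $u_\lambda=v_\lambda-\mu_\lambda\to-\infty$ locally uniformly, which is conclusion~(a). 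On any $\omega$ with $\bar\omega\subset\Omega$, for $\lambda$ small $u_\lambda<0$ on $\omega$ and $\lambda|u_\lambda|^\alpha u_\lambda=-\lambda\mu_\lambda^{\alpha+1}\bigl(1-v_\lambda/\mu_\lambda\bigr)^{\alpha+1}\to-c_\Omega$ locally uniformly, since $v_\lambda/\mu_\lambda\to0$; this is~(b). Finally $v_\lambda$ solves $-F(\nabla v_\lambda,D^2v_\lambda)+|\nabla v_\lambda|^\beta=f-\lambda|u_\lambda|^\alpha u_\lambda\to f+c_\Omega$ locally uniformly, so by stability $v$ solves $-F(\nabla v,D^2v)+|\nabla v|^\beta=f+c_\Omega$ in $\Omega$; since $v_\lambda=\mu_\lambda\to\infty$ on $\partial\Omega$ and a boundary blow--up subsolution bounds $v_\lambda$ from below near $\partial\Omega$ by a function diverging at $\partial\Omega$, one gets $v=+\infty$ on $\partial\Omega$; with $\min v=0$ this is~(c) and exhibits $c_\Omega$ as an ergodic constant.

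\emph{Expected main obstacle.} The delicate point is the boundary blow--up barrier in Step~3: one needs a subsolution of \eqref{ergodic} near $\partial\Omega$ that tends to $+\infty$, and because $F(\Theta_\alpha(\nabla\cdot)D^2\cdot\,\Theta_\alpha(\nabla\cdot))$ degenerates at every point where a single partial derivative vanishes — in particular it may degenerate at boundary points where the inner normal is axis--parallel — the one--dimensional normal ansatz used in \cite{LL,CDLP,BDL2} has to be corrected, and this is where I expect the extra hypothesis $\alpha\ge2$ (together with $\beta\in\,]\alpha+1,\alpha+2]$) to be essential. A secondary technical point is to make precise the comparison statements invoked above — for \eqref{eq45} and for the barrier comparisons of the zero--order--free equation, using the coercivity of $|\nabla u|^\beta$ — which rest on Section~\ref{secLipuni} and \cite{D1,BD2,D2}.
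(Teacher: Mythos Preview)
Your overall strategy coincides with the paper's: a priori bounds on $\lambda|u_\lambda|^\alpha u_\lambda$ via the maximum principle, interior Lipschitz compactness, and a boundary barrier forcing blow-up of the limit $v$. Two corrections are needed.

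\textbf{You have mislocated the role of $\alpha\ge2$.} It is \emph{not} required for the boundary blow-up barrier. The function
\[
\phi(x)=\frac{\sigma}{(d(x)+s)^{\gamma}}-\frac{\sigma}{(\delta_0+s)^{\gamma}},\qquad \gamma=\frac{2+\alpha-\beta}{\beta-\alpha-1},\quad \sigma=\gamma^{-1}\Bigl(\tfrac{(\gamma+1)a}{2}\Bigr)^{\frac{1}{\beta-\alpha-1}},
\]
is a subsolution near $\partial\Omega$ for every $\alpha\ge0$ (see the computations in Theorem~\ref{exilambda}); the anisotropic degeneracy you worry about does not bite, because $|\nabla d|=1$ forces $F(\nabla d,\nabla d\otimes\nabla d)\ge a\sum_i|\partial_i d|^{\alpha+2}\ge aN^{-\alpha/2}>0$, even when the inner normal is axis-parallel. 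Comparing $v_\lambda$ with $\phi$ on $\Omega\setminus\overline{\Omega}_{\delta_0}$ (via Theorem~\ref{thcompar}, using the increasing zero-order term) simultaneously gives the localisation of the minimum $x_\lambda$ \emph{and} the blow-up $v\to+\infty$ after letting $s\to0$; these are one construction, not two. Where $\alpha\ge2$ \emph{is} indispensable is exactly the estimate you invoke already in Step~1: Proposition~\ref{lipunif} is proved only under $\alpha\ge2$, since the mean-value bound on $\bigl||\partial_{i,x}\phi|^{\alpha/2}-|\partial_{i,y}\phi|^{\alpha/2}\bigr|$ requires $\alpha/2\ge1$.

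\textbf{Consequently, your Step~1 does not cover part~1}, which is stated for all $\alpha\ge0$. The fix (which is what the paper does) is to postpone the Lipschitz estimate: in case~1, first establish the two-sided bound $\underline u-|\underline u|_\infty\le u_\lambda\le\psi$ (your exponential barrier $\psi$ is fine; the paper instead compares $u_\lambda^+$ with $|f|_\infty^{1/(1+\alpha)}V$, where $V$ solves $-F(\nabla V,D^2V)=2$, $V|_{\partial\Omega}=0$), and \emph{then} apply the bounded-solution Lipschitz estimate of Theorem~\ref{theolip}, whose constant depends on $\osc u_\lambda$ but which is valid for all $\alpha>0$. Reserve Proposition~\ref{lipunif} for part~2, where $\alpha\ge2$ is assumed and $(u_\lambda)$ is genuinely unbounded.
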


     Note that, even when a  sub-soution to \eqref{eq3} exists, there  exists  an ergodic pair,    this will be proved in   Theorem \ref{exiergo},  section \ref{sexiergo}.

    We will also remark, without giving the details of the proofs,   that the ergodic constant can be characterized by an inf-formula analogous to the one which defines the principal eigenvalues for fully nonlinear operators. Following \cite{BPT}, \cite{P}, \cite{BDL2}, we define
          $$\mu^\star =  \inf \{ \mu\, :\,  \exists\,  \varphi \in  \mathcal {C} ( \overline{ \Omega}), - F(\nabla \varphi,  D^2 \varphi) + | \nabla \varphi |^\beta \leq f+ \mu \}\, .$$

  For the following theorem we introduce some new assumption : 
   \begin{equation}\label{C(x)}
   C(x) = \left((\gamma+1)F(\grad d, \grad d(x)\otimes\grad d(x))\right)^{\frac{1}{ \beta-\alpha-1}}\gamma^{-1} \ { \rm is} \ \mathcal {C}^2  \ {\rm in \ a \ neighborhood\ of } \ \partial \Omega.\end{equation}
    In particular (\ref{C(x)}) is satisfied when the boundary is $ \mathcal{ C}^3$ and $F$ is $ \mathcal {C}^2$. But it is automatically satisfied in the case where $F$ is one of the Pucci's operators. 
     We  will prove in  Theorem \ref{exiergo} that  under the assumption \eqref{C(x)}, any ergodic function  is equivalent near the boundary to $C(x) d^{-\gamma}$ and this allows to prove the uniqueness of the ergodic  constant in Therorem \ref{ABCDE} :

 \begin{theorem} \label{ABCDE}
Suppose that $f$ is  bounded and locally Lipschitz continuous  in $\Omega$, and that $F$ satisfies  \eqref{fnl}, \eqref{posh}  \eqref{defF},  and \eqref{C(x)}. Suppose that $\alpha \geq 2$. Let $c_\Omega$ be an ergodic constant for problem \eqref{ergodic}; then:
           \begin{enumerate}
            \item  $c_\Omega$ is unique; 
                       \item   $c_\Omega = \mu^\star$; 
 \item the map $\Omega\mapsto c_\Omega$ is  nondecreasing with respect to the  domain, and continuous;
\item  if  either $\alpha = 0$   or  $\alpha \neq 0$ and $\sup_\Omega f+ c_\Omega  <0$,  then  $\mu^\star$ is not achieved. 
Moreover,  if $ \Omega' \subset \subset \Omega$, then  $c_{\Omega' }< c_{ \Omega}$.  
\end{enumerate}
              \end{theorem}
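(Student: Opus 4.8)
The plan is to establish the four items roughly in the order they are stated, using as the main engine the sharp boundary asymptotics from Theorem \ref{exiergo}: any ergodic function $v$ behaves like $C(x)d(x)^{-\gamma}$ near $\partial\Omega$, where $\gamma=\frac{\alpha+2-\beta}{\beta-\alpha-1}$ (the exponent forced by balancing $|\nabla v|^\beta$ against the leading term). For item (1), uniqueness of $c_\Omega$, I would take two ergodic pairs $(v_1,c_1)$, $(v_2,c_2)$ with $c_1\le c_2$ and run the classical argument: for $t\in(0,1)$ consider $tv_1$ (or an additive/multiplicative perturbation compatible with the homogeneities \eqref{fnl}–\eqref{posh}) and compare it with $v_2$. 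Because both functions blow up like $C(x)d^{-\gamma}$ with the \emph{same} constant $C(x)$, the difference $v_2-tv_1$ stays bounded above near $\partial\Omega$ as $t\uparrow 1$, so the interior Lipschitz estimate of Section \ref{secLipuni} plus the comparison principle force $c_2\le c_1$; combined with $c_1\le c_2$ this gives equality. The role of $\alpha\ge 2$ is to guarantee, as in Theorem \ref{sympaetpas}(2), that the zero-order machinery and the strong comparison are available.

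For item (2), $c_\Omega=\mu^\star$: the inequality $\mu^\star\le c_\Omega$ is immediate since an ergodic function $v$ (truncated or localized away from $\partial\Omega$, then used as an approximate subsolution) is an admissible competitor in the infimum defining $\mu^\star$, up to an $\varepsilon$. For the reverse inequality $c_\Omega\le\mu^\star$, take any $\mu$ admissible in the definition of $\mu^\star$ with subsolution $\varphi\in\mathcal C(\overline\Omega)$; then $\varphi$ is bounded, so comparing $\varphi$ against the solution $v_\lambda$ of the penalized problem \eqref{eq45} (shifted by its minimum) and letting $\lambda\to 0$ along the sequence from Theorem \ref{sympaetpas}, the limit identity $\lambda|u_\lambda|^\alpha u_\lambda\to -c_\Omega$ yields $c_\Omega\le\mu$; taking the infimum over $\mu$ gives the claim.

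Item (3): monotonicity in $\Omega$ follows from $c_\Omega=\mu^\star$ and the fact that the class of admissible test functions only shrinks when $\Omega$ grows (a subsolution on the larger domain restricts to one on the smaller), so $\mu^\star$ is nondecreasing; continuity with respect to the domain is obtained by approximating $\Omega$ from inside and outside by smooth domains, using the boundary asymptotics from \eqref{C(x)} to control the ergodic functions near $\partial\Omega$ uniformly, and the interior Lipschitz estimate to pass to the limit. For item (4), non-attainment of $\mu^\star$: if $\mu^\star$ were achieved by some $\varphi$ with $-F(\nabla\varphi,D^2\varphi)+|\nabla\varphi|^\beta\le f+\mu^\star$ and $\varphi$ bounded, then under the sign/structure hypothesis ($\alpha=0$, or $\alpha\neq0$ with $\sup_\Omega f+c_\Omega<0$) one can perturb $\varphi$ downward by a small multiple of a first-eigenfunction-type barrier to produce a \emph{strict} bounded subsolution, contradicting minimality of $\mu^\star$ via the characterization; the strict monotonicity $c_{\Omega'}<c_\Omega$ for $\Omega'\subset\subset\Omega$ then follows because on $\Omega$ one has genuine room to strictly decrease the constant while keeping a subsolution, using that an ergodic function on $\Omega$ is finite on $\overline{\Omega'}$ and hence a strict-inequality competitor there.

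The main obstacle I anticipate is item (1): making the comparison $v_2-tv_1$ rigorous right up to the boundary. The interior comparison principle is not enough by itself — one must know that $v_2-tv_1$ does not escape to $+\infty$ along $\partial\Omega$, and this is exactly where the precise first-order asymptotics $v_i\sim C(x)d^{-\gamma}$ with an \emph{identical} leading coefficient $C(x)$ (guaranteed by \eqref{C(x)} and Theorem \ref{exiergo}) must be combined carefully with the next-order behavior, since a naive bound only gives $v_2-tv_1\sim(1-t)C(x)d^{-\gamma}\to+\infty$ pointwise as $d\to0$ for fixed $t<1$. The resolution is to let $t\to1$ and $d\to0$ simultaneously at a controlled rate, or equivalently to absorb the blow-up into a correction term built from $C(x)d^{-\gamma}$ itself; handling this borderline interplay, together with the degeneracy of $F(\nabla u,D^2u)$ wherever some $\partial_i u$ vanishes, is the delicate point, and it is precisely why the hypothesis $\alpha\ge2$ and the regularity \eqref{C(x)} are imposed.
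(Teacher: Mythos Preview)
Your outline has the right architecture, but there are three concrete gaps where the argument as written would not close.

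\textbf{Item (1), uniqueness.} You correctly identify that the sharp asymptotics $v_i\sim C(x)d^{-\gamma}$ with the \emph{same} $C(x)$ is the key, but you then manufacture a difficulty that is not there. The quantity to look at is $\theta v_1-v_2$ (sub-solution minus super-solution), not $v_2-\theta v_1$. For fixed $\theta<1$ one has $\theta v_1-v_2\sim(\theta-1)C(x)d^{-\gamma}\to-\infty$ as $d\to0$, so the supremum is attained at an interior point $\bar x$; there the standard viscosity touching argument gives $f(\bar x)+c_2\le\theta^{1+\alpha}(f(\bar x)+c_1)$, and letting $\theta\uparrow1$ yields $c_2\le c_1$. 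No simultaneous limit $\theta\to1$, $d\to0$ is needed; your ``main obstacle'' is an artifact of having the sign reversed.

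\textbf{Item (2), $c_\Omega=\mu^\star$.} Your argument for $\mu^\star\le c_\Omega$ does not work: an ergodic function is \emph{not} in $\mathcal C(\overline\Omega)$, and truncating it destroys the subsolution property at the truncation level. The paper's route is the opposite pairing. For $\mu^\star\ge c_\Omega$: if some bounded $\varphi$ satisfied $-F(\nabla\varphi,D^2\varphi)+|\nabla\varphi|^\beta\le f+\mu$ with $\mu<c_\Omega$, then $\varphi-u$ (bounded minus blow-up) attains an interior maximum, and the touching argument gives $f+\mu\ge f+c_\Omega$, a contradiction. For $c_\Omega\ge\mu^\star$: for any $\mu<\mu^\star$ there is no bounded subsolution with right-hand side $f+\mu$, hence by Theorem~\ref{sympaetpas}(2) an ergodic constant $c_{f+\mu}\ge0$ exists for that data; by the uniqueness just proved, $c_\Omega=\mu+c_{f+\mu}\ge\mu$, and taking the sup over $\mu<\mu^\star$ gives $c_\Omega\ge\mu^\star$.

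\textbf{Item (4), non-attainment and strict monotonicity.} Your ``perturb $\varphi$ by an eigenfunction-type barrier to get a strict subsolution'' is not the mechanism here: the operator is degenerate, there is no zero-order term, and no such barrier is readily available. The intended tool is Theorem~\ref{2.4} (Section~\ref{compzero}), the comparison principle \emph{without} zero-order terms, which holds precisely under the hypothesis $\alpha=0$ or $\sup_\Omega f+c_\Omega<0$. If $\varphi\in\mathcal C(\overline\Omega)$ achieved $\mu^\star=c_\Omega$, then $\varphi+M$ is still a bounded subsolution for every $M$, while an ergodic function $u$ is a supersolution with $u=+\infty$ on $\partial\Omega$; applying Theorem~\ref{2.4} on $\Omega_\delta$ gives $u\ge\varphi+M$ for all $M$, which is absurd. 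The strict inequality $c_{\Omega'}<c_\Omega$ follows by the same device, comparing $u_\Omega+M$ (bounded on $\overline{\Omega'}$) with $u_{\Omega'}$ (infinite on $\partial\Omega'$). You never invoke Section~\ref{compzero}, which is exactly the result built for this step.
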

              
              Theorems \ref{sympaetpas} and \ref{ABCDE} are obtained by means of several  intermediate  results, most of which are of independent 
interest. As we alaready mentioned it, 
 a first fundamental tool is an interior Lipschitz estimate for solutions of equation \eqref{eq45} that does not depend  on the $L^\infty$ norm 
of the solution but only on the norm of the zero order term. This is done in section \ref{secLipuni}. 

The uniqueness in Theorem \ref{ABCDE} is obtained, by using the results in Section \ref{compzero} : In this part,  we give a comparison theorem for sub- and super- solutions of equation \eqref{eq3}, in which zero order terms are lacking. 
The change of equation that allows to prove the 
comparison principle of Theorem \ref{2.4} is  standard, it has already been employed in \cite{LP} and \cite{BDL2}.
  
\medskip
Let us finally remark that  the question of uniqueness (up to constants) of the ergodic function is open.  We recall that the usual proof for linear operators, see \cite{LL,P},  relies on the strong comparison principle, which does not hold for degenerate operators. Let us also recall that for $p$-laplacian operators the uniqueness of the ergodic function is obtained in \cite{LP} for $p\geq 2$, in \cite{BDL2} for $p\leq 2$,   and in both cases  
 under the condition $\sup_\Omega f + c_\Omega < 0$. The same result is obtained in \cite{BDL2}. In all these degenerate cases, the $\mathcal{C}^1$ regularity is a crucial step, see  \cite{BDL3} for  the  equations  considered in \cite{BDL2}. In the present context of operators which degenerate  as soon as one derivative $\partial_i u$ is zero, the $ \mathcal {C}^1$ regularity  is known only in the case $N= 2$, \cite{BBr},  and more precisely only for equation\eqref{plap} and for $f = 0$, $p=\alpha+2$.
 The method  that the authors employ in \cite{BBr}  is very specific to the variational setting , since it relies essentially on very sharp Moser's iterations. 
  In a more recent paper,  \cite{LR}, the authors make precise the modulus of continuity of the gradient  and are able to generalize the $\mathcal{C}^1$ regularity to the more anisotropic equation 
    $$\partial_1 (  | \partial_1 u |^{ p_1-2} \partial_1 u) + \partial_2 (  | \partial_2 u |^{ p_2-2} \partial_1 u) =0$$
     when $p_1\geq 2, \ p_1\leq  p_2 < p_1+2$.

        \begin{remark} 
      The threshold value $\alpha = 2$ ( or $p = \alpha +2 = 4$)   appears in a lot of papers  treating of these anisotropic equations, let us cite in a non exhaustive manner \cite{BBJ}, \cite{UU}, The restriction $\alpha \leq 2$ or in some cases $\alpha \geq 2$  being  sometimes   relaxed  later.  
       We  are convinced that the results restricted by this condition here, hold  true without it, the fact that we cannot obtain them here is a lacking of the method employed. 
        \end{remark} 
  
    \begin{remark}
    In the equations considered  here, we will use the euclidian norm for the gradient term $| \nabla u |^\beta$, but  other norms should lead to  analogous results, with  obvious changes. 
    \end{remark}

\section{Existence results for the Dirichlet problem}

{\bf Notations} 
\begin{itemize}
\item In all the paper $| p|$ denote the euclidian norm of $p \in \mathbb R^N$. 
\item  $\Omega$ denotes a bounded $\mathcal{C}^2$ domain in $\mathbb R^N$.
 We use $d(x)$ to denote a  $\mathcal{C}^2$ positive function in $\Omega$ with coincides with the distance function from the boundary in a neighborhood of 
$\partial \Omega$.

\item For $\delta>0$, we  set $\Omega_\delta = \{ x\in \Omega\, :\, d(x)>\delta \}$.

\item We denote by $\mathcal{M}^+, \mathcal{M}^-$ the Pucci's operators with ellipticity constants $a, A$, namely, for all $M\in \mathcal {S}$,
$$
\begin{array}{c}
\mathcal{M}^+(M)= A\,  tr(M^+)-a\,  tr(M^-)\\[1ex]
\mathcal{M}^-(M)= a\,  tr(M^+)-A\,  tr(M^-)
\end{array}
$$
and we often use that, as a consequence of \eqref{fnl}, for all $M, N\in \mathcal {S}$ one has
$$
\mathcal{M}^-(N) \leq F(M+N)-F(M)\leq \mathcal{M}^+(N).
$$
\item We denote by $\overline{J}^{2,+} u( \bar x)$ ( resp. $\overline{J}^{2,-} v( \bar y)$) the upper closed semi-jets for a sub-solution $u$ at $\bar x$, ( resp.   the lower  semi-jets of a super-solution $v$ at $\bar y$), \cite{I}, \cite{usr}. 
\end{itemize}
 In some parts of the paper we will need the following properties of $F$  which are an easy consequence of the assumptions  \eqref{fnl},  and \eqref{defF} : 
 
  $(P_1)$ There exists $c$ so that for any $(p,q)\in \mathbb R^N$, and $M \in S$
  one has 
  $$ |F(p, M)-F( q, M)| \leq c ( |p|^{ \alpha\over 2} + |q|^{\alpha \over 2} )  \sum_1^N \left\vert |p_i|^{ \alpha\over 2}- |q_i |^{ \alpha \over 2}\right\vert    \left\vert M\right\vert .$$
   $ (P_2)$  There exists $c$ so that  for any $(p,q)\in \mathbb  R^N$, and $M \in S$,  diagonal 
   $$ |F(p, M)-F( q, M)| \leq c  \sum_1^N \left\vert  |p_i|^{ \alpha}- |q_i |^{ \alpha}\right\vert \  \left\vert M\right\vert.$$

The main result  of this section is the following : 

 \begin{theorem}\label{exidir}
  Suppose that $\alpha >0$, $\beta \leq \alpha+2$ and $F$ satisfies \eqref{fnl}, \eqref{posh}, \eqref{defF},   that $f$ and $b$ are  bounded. Suppose that $\lambda >0$. Then there exists a unique $u$  which satisfies 

 $$\left\{ \begin{array}{lc}
  -F(\nabla u, D^2 u) +b(x)  |\nabla u|^\beta + \lambda |u|^\alpha u  = f & {\rm in} \ \Omega\\
  u=0 & {\rm on} \ \partial \Omega
  \end{array}\right.$$
   Furthermore 
   $u $ is Lipschitz continuous, with some Lipschitz bound depending on $|u|_\infty, |f|_\infty, |b|_\infty$ in the case $\beta < \alpha+2$, and requires that $b$ be Lipschitz in the case $\beta = \alpha+2$.
   \end{theorem}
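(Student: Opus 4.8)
The plan is to prove Theorem~\ref{exidir} by a combination of a priori estimates, approximation, and a fixed point / continuity argument, following the standard strategy for such degenerate fully nonlinear Dirichlet problems. First I would establish \emph{uniqueness} via a comparison principle: if $u_1,u_2$ are respectively a sub- and a super-solution, then since $\lambda |u|^\alpha u$ is strictly monotone in $u$, the zero-order term provides the properness needed to run the classical doubling-of-variables argument of Ishii--Lions; the only subtlety is handling the degeneracy of $F(\nabla u, D^2u)$ when some $\partial_i u$ vanishes and the Hamiltonian $b(x)|\nabla u|^\beta$, but this is exactly the kind of comparison that has been treated in \cite{BD2}, \cite{D1}, and one uses the properties $(P_1)$, $(P_2)$ to absorb the first-order discrepancies. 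For the Hamiltonian term, when $\beta=\alpha+2$ one needs $b$ Lipschitz to control $b(x)|\xi|^{\alpha+2}-b(y)|\xi|^{\alpha+2}$ against the quadratic penalization term $|x-y|^2/\varepsilon$, whereas for $\beta<\alpha+2$ boundedness of $b$ suffices because the growth of the Hamiltonian is strictly subquadratic in the relevant scaling.

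Next, for \emph{existence}, I would regularize the operator to remove the degeneracy: replace $\Theta_\alpha(p)$ by $\Theta_\alpha^\delta(p) = \mathrm{Diag}((|p_i|^2+\delta^2)^{\alpha/4})$ (or add $\delta I$ to the diffusion matrix) to obtain a uniformly elliptic approximating problem $-F_\delta(\nabla u,D^2u) + b(x)|\nabla u|^\beta + \lambda|u|^\alpha u = f$. For each $\delta>0$ the regularized problem has a solution by standard viscosity-solution theory (Perron's method together with the comparison principle, constructing barriers from the smooth boundary and the boundedness of $f$, $b$, $\lambda$): the constant functions $\pm(\|f\|_\infty/\lambda)^{1/(\alpha+1)}$ serve as global sub/super-solutions giving an $L^\infty$ bound $|u_\delta|_\infty \le (\|f\|_\infty/\lambda)^{1/(\alpha+1)}$ independent of $\delta$. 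Then I would invoke the interior and up-to-the-boundary Lipschitz estimate — the one announced in Section~2 of the paper (the extension of the estimate from \cite{BD2} to equations with Hamiltonian $b(x)|\nabla u|^\beta$, $\beta\in[\alpha+1,\alpha+2]$, for bounded sub/super-solutions) — to get a Lipschitz bound on $u_\delta$ depending only on $|u_\delta|_\infty$, $|f|_\infty$, $|b|_\infty$ (and $\Lip(b)$ if $\beta=\alpha+2$), hence uniform in $\delta$.

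With uniform $L^\infty$ and Lipschitz bounds, Ascoli--Arzelà gives a subsequence $u_\delta \to u$ locally uniformly, and indeed uniformly up to the boundary using the boundary barriers; the stability of viscosity solutions under uniform convergence of solutions and local uniform convergence of the operators $F_\delta \to F$ shows that $u$ solves the original Dirichlet problem, with $u=0$ on $\partial\Omega$ preserved by the uniform convergence. The claimed Lipschitz regularity of $u$ is then inherited in the limit. I expect the \textbf{main obstacle} to be the Lipschitz estimate in the critical case $\beta=\alpha+2$: there the Hamiltonian scales exactly like the second-order term, so the usual argument of choosing a test function $\phi(|x-y|)$ and playing the good negative terms coming from $-F$ against the Hamiltonian is delicate, and this is precisely where the Lipschitz continuity of $b$ must enter (to prevent $b(x)|\nabla u|^{\alpha+2}-b(y)|\nabla v|^{\alpha+2}$ from destroying the estimate); one must also be careful that the degeneracy set $\{\partial_i u=0\}$ does not spoil the ellipticity needed to generate the good terms, which forces the argument through the properties $(P_1)$--$(P_2)$ rather than a naive uniform-ellipticity bound. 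A secondary technical point is verifying the comparison principle in the presence of the $|\nabla u|^\beta$ term with only bounded $b$ when $\beta<\alpha+2$, but this follows the now-standard pattern and should not present real difficulty.
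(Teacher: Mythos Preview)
Your overall strategy is sound and would work, but it differs from the paper's route in a way worth noting. The paper does \emph{not} regularize the operator: it works directly with the degenerate $F(\nabla u,D^2u)$, first establishing the Lipschitz estimate (Theorem~\ref{theolip}) and the comparison principle (Theorem~\ref{thcompar}) for the degenerate operator itself, then running Perron's method. The key concrete ingredient you leave implicit is the boundary barrier: the paper builds explicit sub/super-solutions vanishing on $\partial\Omega$ of the form $\pm\log(1+Cd(x))$, extended by a constant in the interior, and checks by direct computation that for $C$ large (depending on $|f|_\infty$, $|b|_\infty$, $\kappa$ with $\lambda\log(1+\kappa)^{1+\alpha}>|f|_\infty$) these are sub/super-solutions near the boundary. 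For the critical case $\beta=\alpha+2$ the paper uses a scaling trick you do not mention: the barrier computation only works when $|b|_\infty$ is small, but if $u$ solves the equation with datum $\epsilon^{1+\alpha}f$ then $u/\epsilon$ solves the equation with $b$ replaced by $\epsilon b$, so one reduces to small $|b|_\infty$.

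Your regularization step is thus extra machinery that the paper avoids; it is not wrong, but it is not free either. The regularized operator $F_\delta$ is not uniformly elliptic in the usual sense (the ellipticity still scales like $|p|^\alpha$ for large $p$, and the Hamiltonian is super-quadratic when $\alpha>0$), so your appeal to ``standard viscosity-solution theory'' for the $\delta$-problem in fact requires essentially the same Lipschitz estimate and comparison principle that the paper proves for $\delta=0$; once you have those, the regularization buys nothing. Your identification of the main obstacle (the borderline $\beta=\alpha+2$ case and the role of the regularity of $b$) is correct and matches the paper's analysis.
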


    It is classical that this existence's result is obtained by exhibiting  convenient sub-  and super-solutions, proving a Lipschitz estimate between them,   a comparison result,  and finally applying Perron's method adapted to the present context.   
     We will not give the details of all  the proofs, since the ideas here are a mixing of the arguments in \cite{BD1}, \cite {BDL1}. We enounce these results  :

 \begin{theorem}\label{theolip} Suppose that $F$ satisfies  \eqref{fnl}, \eqref{posh}, \eqref{defF}, that $\alpha>0$, $\alpha+1 \leq \beta \leq \alpha+2$,  that $b$ is continuous, and H\"older's continuous when $\beta = \alpha+2$. Suppose that  $u$ is a USC bounded by above  viscosity subsolution of 
      $$- F(\grad u,D^2 u)  + b(x)|\nabla u|^\beta \leq  g\quad \hbox{ in } B_1$$
and $v$ is a  LSC bounded by below   viscosity supersolution of 
$$- F(\nabla v, D^2 v)  + b(x)|\nabla v|^\beta \geq  f\quad \hbox{ in } B_1\, ,$$         
with $f$ and $g$ continuous and  bounded. 
Then,  for all $r<1$, there exists $c_r$ such that for all $(x,y)\in B_r^2$
        $$u(x)-v(y) \leq \sup_{B_1} (u-v)+ c_r |x-y|.$$
        \end{theorem}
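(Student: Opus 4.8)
The plan is to use the classical doubling-of-variables technique of Ishii–Lions, adapted to the degenerate anisotropic setting. Fix $r<1$ and pick $r<R<1$. For $(x,y)\in B_R^2$ we want to show $u(x)-v(y)\le \sup_{B_1}(u-v)+M|x-y|$ for a suitable large constant $M$. Arguing by contradiction, we assume the maximum over $\overline{B_R}^2$ of the function
$$
\Phi(x,y)=u(x)-v(y)-M|x-y|-L\omega(|x-y|)-\frac{\Lambda}{R-r}\big(|x-x_0|^2+|y-x_0|^2\big)
$$
is strictly positive for some choice of auxiliary parameters, where $\omega$ is a concave modulus (typically $\omega(s)=s-s^{\sigma}$ with $\sigma>1$ close to $1$, or $\omega(s)=s^{1/2}$-type corrections depending on $\alpha$) whose role is to force ellipticity to kick in, and the quadratic localizing term with weight $\Lambda$ confined to a reference point $x_0\in B_r$ keeps the maximum point $(\bar x,\bar y)$ inside $B_R$ and away from the boundary. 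The positivity assumption forces $\bar x\neq\bar y$, so the test function is smooth at the maximum point and we may apply the theorem on sums (Crandall–Ishii) to produce matrices $X,Y\in\mathcal S$ with $(q_x,X)\in\overline J^{2,+}u(\bar x)$, $(q_y,Y)\in\overline J^{2,-}v(\bar y)$, where $q_x=q_y+O(\Lambda/(R-r))$ and $q:=M\frac{\bar x-\bar y}{|\bar x-\bar y|}+L\omega'(|\bar x-\bar y|)\frac{\bar x-\bar y}{|\bar x-\bar y|}$ is the common "large" part of the gradient, of size comparable to $M$.

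The heart of the estimate is to combine the two viscosity inequalities:
$$
F\big(\Theta_\alpha(q_x)X\Theta_\alpha(q_x)\big)-F\big(\Theta_\alpha(q_y)Y\Theta_\alpha(q_y)\big)\ \ge\ b(\bar x)|q_x|^\beta-b(\bar y)|q_y|^\beta+g(\bar x)-f(\bar y).
$$
For the left side I would first replace $\Theta_\alpha(q_y)Y\Theta_\alpha(q_y)$ by $\Theta_\alpha(q_x)Y\Theta_\alpha(q_x)$, paying an error controlled by $(P_1)$ (or $(P_2)$ after a diagonalization) of order $|q|^{\alpha}\,|x-y|\,|Y|$ times lower-order gradient differences — here one uses $|q_x-q_y|\le C\Lambda/(R-r)$ and the Hölder continuity of $s\mapsto s^{\alpha/2}$. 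Then the standard matrix inequality from the theorem on sums, tested against the rank-one-type direction $\bar x-\bar y$, yields that $X-Y$ has a large negative eigenvalue of size $\sim L|\omega''(|\bar x-\bar y|)|$ in the direction of $\bar x-\bar y$, while $X,Y$ are bounded above by $\sim (M+L)/|\bar x-\bar y|+\Lambda/(R-r)$. Conjugating by $\Theta_\alpha(q_x)$, whose entry in the $\bar x-\bar y$ direction is bounded below by a positive power of $M$ (this is where one must be careful: $\Theta_\alpha(q_x)$ could degenerate if some component $q_i$ vanishes, but the direction $\bar x-\bar y$ is essentially the direction of $q$ itself, so $|q_i|$ summed is comparable to $|q|$ — one exploits that the dangerous direction coincides with the gradient direction, exactly as in the interior Lipschitz arguments of \cite{BD2}, \cite{BD1}). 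Feeding this into \eqref{fnl} via the Pucci bounds $\mathcal M^-,\mathcal M^+$, the good large negative eigenvalue beats all the positive contributions provided $L$ is chosen large compared to $M$, $\Lambda$, $|b|_\infty$, $|f|_\infty$, $|g|_\infty$; the Hamiltonian difference $b(\bar x)|q_x|^\beta-b(\bar y)|q_y|^\beta$ is absorbed because $\beta\le\alpha+2$ keeps its growth in $M$ at most quadratic (the critical case $\beta=\alpha+2$ is precisely why Hölder continuity of $b$ is needed, so that the "bad" term $|b(\bar x)-b(\bar y)||q|^\beta\lesssim |x-y|^{\gamma}|q|^{\alpha+2}$ can be reabsorbed using the concave modulus $\omega$). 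This produces a contradiction, proving the claim with $c_r$ depending on $r$, the ellipticity constants, $|b|$ (and its Hölder seminorm if $\beta=\alpha+2$), $|f|_\infty$, $|g|_\infty$ and the $L^\infty$ bounds of $u,v$ that enter through the constant $\Lambda/(R-r)$ calibrated so the sup of $\Phi$ is attained inside.

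The main obstacle is the anisotropic degeneracy of $\Theta_\alpha$: unlike the radial case $|\nabla u|^\alpha F(D^2u)$, here $\Theta_\alpha(q_x)$ can be rank-deficient even when $|q_x|$ is large. The resolution — and the step I expect to require the most care — is to show that the relevant second-difference quadratic form, once conjugated, still has a strictly negative eigenvalue of the right size: one must check that the direction $e:=(\bar x-\bar y)/|\bar x-\bar y|$ along which $X-Y\le -cL|\omega''|\, e\otimes e$ is not annihilated by $\Theta_\alpha(q_x)$, which holds because $q$ is (up to the small perturbation) a positive multiple of $e$, so $\Theta_\alpha(q_x)e$ has Euclidean length bounded below by $c\,|q|^{\alpha/2}\ge c\,M^{\alpha/2}$ after noting $\sum_i|q_i|^{\alpha/2}\ge N^{-\alpha/2}|q|^{\alpha/2}$. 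Handling the off-diagonal coupling this introduces, together with the error terms from $(P_1)$–$(P_2)$ and from $q_x\ne q_y$, is the technical crux; everything else is a bookkeeping of the now-standard Ishii–Lions scheme.
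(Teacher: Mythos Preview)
Your overall architecture is the right one---doubling of variables, a concave modulus, Ishii's lemma, and the observation that the ``bad'' direction coincides with the gradient direction so that $\Theta_\alpha$ does not kill it. But there is a genuine structural gap: you are attempting the Lipschitz estimate in one shot, whereas the paper (and the underlying computations from \cite{BD2} on which it relies) proceeds in \emph{two} steps. First one proves a H\"older estimate (Lemma~\ref{lem1}) for every exponent $\gamma<1$, using the test function $M|x-y|^\gamma + L(|x-x_o|^2+|y-x_o|^2)$; only then does one run the argument with the almost-Lipschitz modulus $\omega(s)=s-\frac{s^{1+\tau}}{2(1+\tau)}$. The reason the preliminary H\"older step is not optional is that the anisotropic estimate $F(q^x,X)\le F(q^y,-Y)-c_1 M^{1+\alpha}|\bar x-\bar y|^{\tau-1}$ requires sharp control on the localization error: one needs $L|\bar x-x_o|^2,\ L|\bar y-x_o|^2\le c_r|\bar x-\bar y|^{\gamma}$ (this is estimate \eqref{hh} in the paper), hence $|\bar x-x_o|,|\bar y-x_o|=O(|\bar x-\bar y|^{\gamma/2})$. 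Without this, the perturbations $L(\bar x-x_o)$ and $-L(\bar y-x_o)$ in $q^x,q^y$ are merely $O(1)$, and after passing through the $\alpha/2$-H\"older map $p\mapsto\Theta_\alpha(p)$ and multiplying by the matrix bound $|X|,|Y|\lesssim M/|\bar x-\bar y|$, the resulting error terms are not dominated by the good negative term $M^{1+\alpha}|\bar x-\bar y|^{\tau-1}$. Your sentence ``paying an error controlled by $(P_1)$ of order $|q|^{\alpha}|x-y||Y|$ times lower-order gradient differences'' is exactly the place where this bites: the ``lower-order gradient differences'' are only lower order \emph{after} you know \eqref{hh}, and \eqref{hh} comes from the H\"older lemma, not from the Lipschitz ansatz itself.

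Two smaller points. First, the calibration is inverted: in the paper $L$ is fixed by the localization requirement ($L\sim(\sup u-\inf v)/(1-r)^2$) and it is $M$ that is taken large (equivalently $\delta$ small), not ``$L$ large compared to $M$''. Second, your non-degeneracy argument for $\Theta_\alpha(q_x)e$ is morally right but the inequality you quote is not the one that does the job; what you actually need is $|\Theta_\alpha(q)e|^2=\sum_i|q_i|^{\alpha}e_i^2\gtrsim M^{\alpha}\sum_i|e_i|^{\alpha+2}\ge M^{\alpha}N^{-\alpha/2}$ when $q\approx Me$, together with the fact (again requiring \eqref{hh}) that the localization perturbation does not spoil this lower bound componentwise.
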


    In order to prove Theorem \ref{theolip} we first need the following H\"older's estimate:
       
\begin{lemma}\label{lem1} Under the hypothesis of Theorem \ref{theolip},  for any    $\gamma \in (0,1)$,  and for all $r\in ]0,1[$,  there exists
$c_{r, \gamma}>0$ such that for  all $(x,y)\in B_r^2$
\begin{equation}\label{holdest}
u(x)-v(y) \leq \sup_{B_1} (u-v)+ c_{r, \gamma}|x-y|^\gamma .
        \end{equation} 
\end{lemma}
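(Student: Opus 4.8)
The plan is to prove the H\"older estimate \eqref{holdest} by the classical doubling-of-variables technique adapted to degenerate fully nonlinear operators, as in \cite{BD1}, \cite{BD2}. Fix $r\in]0,1[$ and $\gamma\in(0,1)$; choose an intermediate radius $r<r'<1$. The idea is to show that for a suitable constant $L$ (to be fixed large depending on $r,r',\gamma,a,A,\alpha,\beta,|f|_\infty,|g|_\infty,|b|_\infty$, and the modulus of continuity of $b$ when $\beta=\alpha+2$) and a suitable localizing factor, the auxiliary function
\[
\Phi(x,y)= u(x)-v(y)-\sup_{B_1}(u-v)-L|x-y|^\gamma - M\bigl(\omega(x)+\omega(y)\bigr)
\]
is nonpositive on $\overline{B_{r'}}\times\overline{B_{r'}}$, where $\omega$ is a smooth nonnegative function vanishing on $\overline{B_r}$ and blowing up (or large enough) near $\partial B_{r'}$, with $M$ chosen so that the penalization forces any interior maximum of $\Phi$ to lie in $B_{r'}\times B_{r'}$ and in fact with both arguments in a region where the localization terms are harmless. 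If $\Phi\le 0$ there, then in particular on $B_r^2$ we recover \eqref{holdest}.

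Assume for contradiction that $\Phi$ attains a positive maximum at some $(\bar x,\bar y)$ with $\bar x\neq\bar y$ (the case $\bar x=\bar y$ is immediate since then $u(\bar x)-v(\bar x)\le\sup(u-v)$ up to the localization, which one arranges to be negative or zero there). The penalization guarantees $\bar x,\bar y$ are interior. Apply the Theorem on sums (Jensen–Ishii lemma, \cite{I}, \cite{usr}) to obtain matrices $X\in\overline{J}^{2,+}u(\bar x)$, $Y\in\overline{J}^{2,-}v(\bar y)$ with the usual matrix inequality controlled by $D^2(L|x-y|^\gamma)$ plus the contributions of $M\omega$. The gradient of the test function at $\bar x$ is $p_x:=L\gamma|\bar x-\bar y|^{\gamma-2}(\bar x-\bar y)+M\nabla\omega(\bar x)$, and similarly $p_y$ at $\bar y$; these are close to each other and both of size comparable to $L|\bar x-\bar y|^{\gamma-1}$, which is large when $|\bar x-\bar y|$ is small. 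Write the two viscosity inequalities
\[
-F(p_x,X)+b(\bar x)|p_x|^\beta\le g(\bar x),\qquad -F(p_y,Y)+b(\bar y)|p_y|^\beta\ge f(\bar y),
\]
and subtract. Using \eqref{fnl} together with the properties $(P_1)$–$(P_2)$ to compare $F(p_x,X)$ with $F(p_y,X)$, and the matrix inequality to bound $F(p_y,X)-F(p_y,Y)$ from above by a quantity of the form $C\,L\gamma(1-\gamma)|\bar x-\bar y|^{\gamma-2}$ times a factor carrying the degeneracy weights $\Theta_\alpha(p_y)$ (which are comparable to $|\bar x-\bar y|^{\alpha(\gamma-1)/2}$ on the relevant coordinates), one obtains a chain of inequalities whose dominant negative term behaves like $-c\,L|\bar x-\bar y|^{\gamma(\alpha+1)-(\alpha+2)}$ coming from the Hessian term, competing against the Hamiltonian term $b(\bar x)|p_x|^\beta-b(\bar y)|p_y|^\beta$ of order $|b|_\infty L^\beta|\bar x-\bar y|^{\beta(\gamma-1)}$ plus the lower-order bounded data. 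Because $\beta\le\alpha+2$, choosing $\gamma$ close to $1$ and $L$ large makes the Hessian term dominate (when $\beta<\alpha+2$ the exponents work for any $\gamma<1$; when $\beta=\alpha+2$ one uses H\"older continuity of $b$ to absorb the difference $b(\bar x)-b(\bar y)$, which is where that hypothesis enters), yielding a contradiction with the positivity of the maximum.

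The main obstacle, and the point requiring the most care, is the control of the difference $F(p_x,X)-F(p_y,Y)$ in the genuinely degenerate regime: the weights $\Theta_\alpha(p)={\rm Diag}(|p_i|^{\alpha/2})$ can vanish along individual coordinate directions when some component $(p_x)_i$ or $(p_y)_i$ is zero or tiny, so one cannot simply factor out a single scalar power of $|\bar x-\bar y|$. The remedy is to split the sum over coordinates $i$ into those for which $|(\bar x-\bar y)_i|$ is comparable to $|\bar x-\bar y|$ and those for which it is much smaller, use $(P_1)$–$(P_2)$ on the former and exploit that the test-function Hessian $D^2(L|x-y|^\gamma)$ has its large eigenvalue (of size $L\gamma(1-\gamma)|\bar x-\bar y|^{\gamma-2}$) in the direction $\bar x-\bar y$ while being of moderate size $L\gamma|\bar x-\bar y|^{\gamma-2}$ transversally, matching exactly against the weights; this is the anisotropic analogue of the estimates in \cite{BD1}, \cite{BD2} and is carried out there in detail, so here one only needs to check the Hamiltonian term $b(x)|\nabla u|^\beta$ does not spoil the balance, which holds precisely under $\beta\le\alpha+2$. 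Finally, the extra penalization terms $M\omega$ contribute gradients and Hessians that are bounded on the region where the maximum can occur, hence are absorbed into the lower-order terms by taking $L$ large relative to $M$, completing the argument.
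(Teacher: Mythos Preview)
Your outline follows the same route as the paper: doubling of variables with a localizing penalty, Ishii's lemma, invocation of the second-order anisotropic estimate from \cite{BD2} for the main term, and then a check that the Hamiltonian contribution is lower order. The only cosmetic difference is that the paper uses the moving-center quadratic penalty $L(|x-x_o|^2+|y-x_o|^2)$ (and then lets $x_o$ range over $B_r$) rather than your fixed barrier $M\omega$; both localizations work.

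There is, however, a genuine slip in your power counting that makes the balance as written fail. The weights satisfy $\Theta_\alpha(p_x)\sim L^{\alpha/2}|\bar x-\bar y|^{\alpha(\gamma-1)/2}$ on the dominant coordinate, so after conjugating the Hessian the second-order term is of order
\[
L^{1+\alpha}\,|\bar x-\bar y|^{(\gamma-1)(\alpha+1)-1},
\]
not $L\,|\bar x-\bar y|^{\gamma(\alpha+1)-(\alpha+2)}$ as you wrote: you retained the factor $|\bar x-\bar y|^{\alpha(\gamma-1)}$ coming from the weights but dropped the accompanying $L^\alpha$. With only $L^1$ against the Hamiltonian's $L^\beta$ (and $\beta>1$), ``taking $L$ large'' goes the wrong way. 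Once the exponent is corrected to $L^{1+\alpha}$, everything lines up: since $L|\bar x-\bar y|^\gamma\le\sup u-\inf v$, the ratio (Hamiltonian)/(Hessian) scales like $L^{(\beta-\alpha-2)/\gamma}$, which tends to $0$ for $\beta<\alpha+2$ and stays bounded for $\beta=\alpha+2$ (where the H\"older continuity of $b$ supplies the extra factor $|\bar x-\bar y|^{\gamma_b}$). The actual contradiction is not with ``positivity of the maximum'' but with the boundedness of $f,g$: the Hessian term forces $g(\bar x)-f(\bar y)\ge c\,L^{(2+\alpha)/\gamma}$, which is absurd for $L$ large. This is exactly the $c\,\delta^{-(2+\alpha)}$ the paper obtains after its choice $M\sim\delta^{-\gamma}$, $|\bar x-\bar y|\le\delta$.
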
        
\begin{proof}[Proof of Lemma \ref{lem1}]
We borrow ideas from \cite{IL}, \cite{BCI},  \cite{BDcocv}, \cite{BD2}. 
Fix $x_o \in B_r$, and define 
     $$\phi(x, y) = u(x) - v(y) -\sup_{B_1} ( u-v) -M|x-y|^\gamma -L (|x-x_o|^2+ |y-x_o|^2)$$
with $L= {16 (\sup u-\inf v)\over (1-r)^2}$  and $M= {4(\sup u-\inf v)\over \delta^\gamma}$,  
$\delta$ will be chosen later small enough depending only on the data and on universal constants.  
We want to prove that $\phi (x,y) \leq 0$ in $B_1$,  which will imply the result, 
taking first $x= x_o$ and making $x_o$ vary. 
   
We argue by contradiction and suppose that $\sup_{B_1}  \phi(x, y)>0$. 
By the previous assumptions on $M$ and $L$  the supremum is achieved on $(\bar x, \bar y)$  which  belongs to  
$B_{1+r\over 2} ^2$ and  is such that $0<|\bar x-\bar y| \leq \delta$. 
   
By Ishii's Lemma \cite{I}, \cite{usr},  there   exist  $X $  and $Y$ in $S$  such that 
$ (q^x, X ) \in \overline{J}^{2,+} u(\bar x), (q^y, -Y ) \in  \overline{J}^{2,-}v(\bar y)$
with 
           $$ q^x = \gamma M |\bar x-\bar y|^{\gamma-2} (\bar x-\bar y) +2 L (\bar x-x_o),$$
           $$q^y = \gamma M |\bar x-\bar y|^{\gamma-2} (\bar x-\bar y) -2L (\bar y-x_o), $$
        with 
            $$\left( \begin{array}{cc}
             X &0\\
             0& Y\end{array} \right) \leq 2 \left( \begin{array}{cc}
             B&-B\\
             -B&B\end{array}\right)$$
              and $B = D^2 (  |\cdot |^\gamma)$.  
Hence$$- F(q^x,X)  + b(\bar x)|q^x|^\beta \leq  g(\bar x),\quad           - F(q^y,-Y)  + b(\bar y)|q^y|^\beta \geq  f(\bar y).
$$
 Using the  computations  in \cite{BD2} one gets the existence of $c_1$ so that 
 $$ F(  q^x, X) \leq F(  q^y, -Y) -c_1 M^{1+ \alpha} |\bar x-\bar y |^{ ( \gamma-1) ( \alpha+1) -1}.$$
  So to conclude in the present case it is sufficient to obtain that for $\delta$ small, 
 $|b(x) |q_x|^\beta -b( y) |q_y|^\beta | $ is small with respect to $ M^{1+ \alpha} |\bar x-\bar y |^{ ( \gamma-1) ( \alpha+1) -1}$. This is obtained using 
 
  1) If $\beta < \alpha+2$
  \begin{eqnarray*}
   |b(x)-b(y) | | q_x |^\beta &\leq &2|b|_\infty M^\beta | \bar x-\bar y|^{( \gamma-1) \beta} \\
   &\leq&2|b|_\infty  | \bar x-\bar y |^{ 2+ \alpha-\beta} ( M^{1+ \alpha} |\bar x-\bar y |^{ ( \gamma-1) ( \alpha+1) -1})\\
   &<< &M^{1+ \alpha} |\bar x-\bar y |^{ ( \gamma-1) ( \alpha+1) -1}.
   \end{eqnarray*}
   
   2) If $\beta = \alpha+2$ we just use the continuity of $b$ 
  $$|b(x) -b( y)| | q_x|^\beta \leq o(1) M^{1+ \alpha} |\bar x-\bar y |^{ ( \gamma-1) ( \alpha+1) -1}.$$
On the other hand, by the mean value's Theorem, and for some universal constant $c$
  $$ |b( y)( | q_x |^\beta -|q_y|^\beta )| \leq  c |b|_\infty |q_x+q_y | M^{ \beta-1}|\bar x-\bar y |^{ ( \gamma-1) (\beta-1)}\leq 8cL |b|_\infty M^{ \beta-1}|\bar x-\bar y |^{ ( \gamma-1) (\beta-1)}$$
    which is also small with respect to $M^{1+ \alpha} |\bar x-\bar y |^{ ( \gamma-1) ( \alpha+1) -1}$.    We 
    can then conclude to a contradiction, since one has

               \begin{eqnarray*}
              -g(\bar x)  &\leq &  F(q^x,X) -b(\bar x) |q^x|^\beta  \\
              &\leq &    F(q^y,Y)   - c M^{1+ \alpha}  |\bar x-\bar y|^{\gamma-2+ ( \gamma-1) \alpha}- b ( \bar y) |q^y|^\beta  \\
              &\leq &-f(\bar y) - c \delta^{-\gamma(1+ \alpha)}  |\bar x-\bar y|^{\gamma(\alpha+1)-(2+\alpha)}\\
              &\leq & -f( \bar y) -c \delta^{-(2+\alpha)}.
              \end{eqnarray*}
This is a contradiction with the fact that $f$ and $g$ are bounded, as soon as $\delta$ is small enough.  
\end{proof}
\begin{proof} of Theorem \ref{theolip} 

For fixed $\tau\in (0,{\inf (1, \alpha) \over 2})$,  $\tau < \alpha+2-\beta$ when $\alpha+2-\beta>0$ and $\tau < \gamma_b$ where  $\gamma_b$ is some H\"older's exponent for $b$, when  $\beta = \alpha+2$.  Let  $s_o=(1+ \tau)^{\frac{1}{\tau}}$, and  define
for $s\in (0,s_o)$, $$
 \omega(s)  = s-{s^{1+\tau}\over 2(1+ \tau)},
$$ which we extend continuously after $s_o$ by a constant. 
Note that $\omega(s) $ is  $\mathcal{C}^2$ on $s>0$,  $s< s_o$,  satisfies 
 $\omega^\prime >{1\over 2} $, $\omega^{\prime \prime} <0$  on $]0,1[$, and 
 $s>\omega (s) \geq {s\over 2}$.

As before in the H\"older case, with $L = \frac{ 16( \sup u-\inf v)}{(1-r)^2}$ and $M = \frac{4( \sup u-\inf v)}{\delta}$,   we define  
$$\phi(x,y)= u(x) - v(y) -\sup_{B_1} (u-v) -M\omega(|x-y|) -L (|x-x_o|^2+ |y-x_o|^2).$$  
 Classically, as before, we suppose that there exists a maximum point $(\bar x, \bar y)$ such that $\phi(\bar x,\bar y)>0$, then by the assumptions on $M$, and $L$,  $\bar x, \bar y$ belong to $B( x_o, {1+r \over 2})$, hence they are interior points. 
This implies, using \eqref{holdest} in Lemma \ref{lem1} with $\gamma <1$ such that  $\frac{\gamma}{2} >  {\tau\over \inf (1,\alpha)}$ that,  for some constant  $c_r$,

\begin{equation}\label{hh} L | \bar y-x_o|^2, L|\bar x-x_o|^2 \leq c_r  |\bar x-\bar y |^{\gamma}.\end{equation}
and then  one has 
$| \bar y-x_o|, |\bar x-x_o | \leq \left({c_r  \over L}\right)^{1\over 2}  |\bar x-\bar y |^{\gamma\over 2}$.

Furthermore, 
 there   exist  $X $  and $Y $ in $S$  such that 
          $ (q^x, X ) \in \overline{J}^{2,+} u(\bar x)$, $(q^y, -Y ) \in \overline{J}^{2,-}v(\bar y)$ with
 $$ q^x= M \omega^\prime (|\bar x-\bar y|) {\bar x-\bar y\over |\bar x-\bar y|}  + L (\bar x-x_o), \ q^y= M \omega^\prime (|\bar x-\bar y|) {\bar x-\bar y\over |\bar x-\bar y|}  - L (\bar y-x_o).$$
  and 
     $$\left( \begin{array}{cc}
             X &0\\
             0& Y\end{array} \right) \leq 2 \left( \begin{array}{cc}
             B&-B\\
             -B&B\end{array}\right)$$
              and $B(x)  = D^2 (  \omega (| x|))$.  
  Following  the computations in \cite{BD2} ( for this we need among other things  \eqref{hh}),  one gets the existence of $c_1$ so that 
  $$ F( q^x, X)\leq F( q^y, -Y) \leq -c_1M^{1+ \alpha}  | \bar y-\bar x |^{\tau-1}$$
   So to conclude 
 we need to prove that 
  $|b(\bar x) |q_x|^\beta -b( \bar y) |q_y|^\beta | $ is small with respect to $ M^{1+ \alpha} | \bar y-\bar x |^{\tau-1}$. 
   This is obtained as in the H\"older's case by using  (the constant $c$  can vary from one line to another)

 1) If $\beta < \alpha +2$ 
   $$ |b(\bar x)-b( \bar y) | |q_x |^\beta \leq  2|b|_\infty M^\beta << M^{1+\alpha } | \bar x-\bar y |^{ \tau-1} $$
    by the assumption $\tau < 2+\alpha-\beta$. 
    
    2) If $ \beta = \alpha+2$
    $$ |b(\bar x)-b( \bar y) | |q_x |^\beta \leq c | \bar x-\bar y |^{\gamma_b} M^{2+\alpha} \leq c | \bar x-\bar y |^{ \gamma_b-\tau} ( M^{1+ \alpha} | \bar y-\bar x |^{\tau-1}).$$
     
   We finally use 
   $$ \left\vert b( \bar y) \right\vert  \left\vert  | q^y|^\beta -|q^x |^\beta \right\vert  \leq c|b|_\infty | q^y+ q^x| M^{\beta-1} \leq c M^{ \beta-1} . $$
    So the expected result holds by the choice of $M$ respectively to $\delta$. 
     Once more as in the proof of lemma \ref{lem1} one can conclude to
 a contradiction.

\end{proof}
It is clear that Theorem \ref{theolip}   can be extended to the case where $\Omega$ replaces $B(0, 1)$ and $\Omega^\prime \subset \subset \Omega$ replaces $B(0, r)$. Furthermore
adapting the method  in \cite{BDL1} that  we have the  following  Lipschitz estimate \underline{up to the boundary}  : 
 
  If $u$ is a sub-solution of 
  $$- F(  \nabla u ,D^2 u) + b(x) | \nabla u |^\beta \leq f, $$
   and $v$ is a super-solution of 
   $$- F( \nabla v , D^2 v) + b(x) | \nabla v |^\beta \geq g, $$
    and $u \leq 0$ , $v \geq 0$ on $\partial \Omega$, then there exists $c$ so that for any $(x,y) \in \overline{ \Omega}^2$
     $$u(x)-v(y) \leq \sup ( u-v) + c | x-y|. $$

 \begin{theorem} \label{thcompar}
 Suppose that $\Omega$ is a $\mathcal{ C}^2 $ bounded domain in $\mathbb  R^N$. Suppose that $\alpha>0$, $\alpha+1 \leq \beta\leq \alpha+2$, that  $F$ satisfies \eqref{fnl}, \eqref{posh} and \eqref{defF}.  Let $b$ be H\"older continuous. Let $\gamma$ be  a non decreasing  continuous function such that $\gamma(0)=0$.   Suppose that $u$ is a sub-solution of 
 $$-F( \nabla u, D^2 u) + b(x) | \nabla u |^\beta +\gamma( u) \leq g$$
  and 
 $$-F( \nabla v, D^2 v) + b(x) | \nabla v |^\beta + \gamma (v)  \leq f$$
  with $g \leq f$,  both of them being continuous and bounded. 
  Then if $ g< f$  in $\Omega$ or $\gamma$ is increasing, 
  if $u\leq v$ on $\partial \Omega$, $u \leq v$ in $\Omega$.
   \end{theorem}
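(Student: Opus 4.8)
The plan is to reduce the comparison statement to a situation where one can apply the doubling-of-variables technique together with the interior Lipschitz estimate of Theorem \ref{theolip} and Lemma \ref{lem1}. The first step is a standard change of unknowns to handle the boundary behaviour: since $u$ and $v$ are bounded and $u\le v$ on $\partial\Omega$, I would argue by contradiction and suppose $\sup_\Omega(u-v)=:m>0$, and I would first show that this supremum is attained at an interior point. For this I would use that, near $\partial\Omega$, the term $\gamma(u)$ is harmless and that a small perturbation of $v$ of the form $v+\ep\psi$, where $\psi$ is built from the distance function $d(x)$, is still a supersolution with strictly better right-hand side; this pushes the maximum of $u-(v+\ep\psi)$ into a compact subset $\Omega'\subset\subset\Omega$, uniformly in $\ep$. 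Here one uses that $F$ has the scaling \eqref{posh} and the ellipticity \eqref{fnl} together with the explicit form \eqref{defF} of $\Theta_\alpha$, so that $F(\nabla(v+\ep\psi),D^2(v+\ep\psi))$ differs from $F(\nabla v,D^2 v)$ by a controlled amount; and that $\gamma$ is nondecreasing so $\gamma(v+\ep\psi)\ge\gamma(v)$ where $\psi\ge 0$.

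The second step is the doubling of variables. On the compact set $\Omega'$ I would set
$$
\phi(x,y)=u(x)-v(y)-\frac{C}{\ep}|x-y|^2
$$
(or, more efficiently, use the penalization $\Phi_j(x,y)=u(x)-v(y)-j\,\omega(|x-y|)$ with the concave modulus $\omega$ from the proof of Theorem \ref{theolip}), let $(\bar x_\ep,\bar y_\ep)$ be a maximizer, and use the interior gradient bound: by Theorem \ref{theolip} applied on balls around the limit point, the quantities $q^x=\frac{2C}{\ep}(\bar x_\ep-\bar y_\ep)$ stay bounded, so the penalized gradients are bounded independently of $\ep$. Then Ishii's lemma produces matrices $X,Y$ with the usual matrix inequality, and one writes the two viscosity inequalities
$$
-F(q^x,X)+b(\bar x_\ep)|q^x|^\beta+\gamma(u(\bar x_\ep))\le g(\bar x_\ep),\qquad
-F(q^y,-Y)+b(\bar y_\ep)|q^y|^\beta+\gamma(v(\bar y_\ep))\ge f(\bar y_\ep).
$$
Subtracting, the second-order terms $F(q^x,X)-F(q^y,-Y)$ are controlled as in \cite{BD2} and in the proofs above (this is where one needs the gradients to be bounded away from the degeneracy to make the $\Theta_\alpha$ factors comparable, using $(P_1)$ and $(P_2)$); the Hamiltonian difference $b(\bar x_\ep)|q^x|^\beta-b(\bar y_\ep)|q^y|^\beta$ is handled exactly as in Lemma \ref{lem1} and Theorem \ref{theolip}, using Hölder continuity of $b$ when $\beta=\alpha+2$; and $\gamma(u(\bar x_\ep))-\gamma(v(\bar y_\ep))\to\gamma(m)-\gamma(0)$ if $\gamma$ is increasing, or $g-f<0$ strictly otherwise. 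Passing to the limit $\ep\to 0$ gives $0\le g(\bar x)-f(\bar x)-(\gamma(m)-\gamma(0))<0$ (since at the interior maximum $u(\bar x)-v(\bar x)=m>0$ forces $u(\bar x)>v(\bar x)$ and hence, if $\gamma$ is strictly increasing, $\gamma(u(\bar x))>\gamma(v(\bar x))$; if instead $g<f$ this is immediate), a contradiction. Hence $m\le 0$.

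The main obstacle I expect is exactly the control of the second-order terms under the double degeneracy of \eqref{defF}: the operator degenerates as soon as a single component $\partial_i$ of the gradient vanishes, so the matrix inequality from Ishii's lemma must be combined carefully with the structure of $\Theta_\alpha(q^x)$ versus $\Theta_\alpha(q^y)$, and one must know that the penalized gradient $q^x$ has \emph{all} components bounded below away from zero — which is not automatic. The way around this is the one already used in the proofs of Lemma \ref{lem1} and Theorem \ref{theolip}: work with the concave modulus $\omega$ so that $|q^x|\sim M\omega'(|\bar x-\bar y|)$ is the dominant term, treat the components where $q_i$ is small by the Pucci bounds $\mathcal M^\pm$ directly, and absorb the error into the large negative term $-c_1 M^{1+\alpha}|\bar x-\bar y|^{\tau-1}$. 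Once the interior maximum is secured and the gradients are pinned down, the rest is the same computation as above, and the conclusion $u\le v$ on $\overline\Omega$ follows.
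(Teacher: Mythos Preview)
Your plan works in spirit but misses the central simplification the paper exploits, and as a result you invent a difficulty that is not there. In the paper's proof one simply doubles variables with the \emph{pure} quadratic penalty $\psi_j(x,y)=u(x)-v(y)-\tfrac{j}{2}|x-y|^2$: since $u\le v$ on $\partial\Omega$ and the supremum is positive, the maximum $(x_j,y_j)$ lies in a fixed $\Omega'\subset\subset\Omega$ for $j$ large, so no boundary barrier is needed. The crucial point is that with this penalty $q^x=q^y=j(x_j-y_j)=:p_j$ exactly, hence $\Theta_\alpha(q^x)=\Theta_\alpha(q^y)$, and Ishii's matrix inequality gives $X_j+Y_j\le 0$, so ellipticity of $F$ yields $F(p_j,X_j)\le F(p_j,-Y_j)$ with no further work. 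The ``main obstacle'' you describe --- controlling $\Theta_\alpha(q^x)$ versus $\Theta_\alpha(q^y)$ when some component $q_i$ is small --- simply does not occur. The role of Theorem~\ref{theolip} is much more modest than you suggest: it is invoked only to bound $j|x_j-y_j|$, so that $|p_j|$ stays bounded and $b(x_j)|p_j|^\beta-b(y_j)|p_j|^\beta\to 0$ by continuity of $b$. Subtracting the two viscosity inequalities and passing to the limit then gives $g(\bar x)-\gamma(u(\bar x))\ge f(\bar x)-\gamma(v(\bar x))$, which is the contradiction.

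Two further remarks. First, your boundary perturbation $v+\ep\psi$ is not only unnecessary but also delicate for this operator: to check that $v+\ep\psi$ remains a supersolution you would need to compare $F(\nabla\phi,D^2\phi)$ with $F(\nabla\phi-\ep\nabla\psi,D^2\phi-\ep D^2\psi)$ at test functions, and the anisotropic $\Theta_\alpha$ makes this comparison nontrivial precisely where some $\partial_i\phi$ is small --- you have glossed over a real issue. Second, your limit $\gamma(u(\bar x_\ep))-\gamma(v(\bar y_\ep))\to\gamma(m)-\gamma(0)$ is incorrect; the correct limit is $\gamma(u(\bar x))-\gamma(v(\bar x))$ with $u(\bar x)=v(\bar x)+m$, and strict monotonicity of $\gamma$ (or $g<f$) then gives the contradiction.
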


    \begin{proof} of Theorem  \ref{thcompar}
    
     We use classically the doubling of variables. Suppose that $u> v$ somewhere, then consider 
     $$\psi_j(x, y ) = u(x)-v(y) -{j \over 2} |x-y|^2. $$
     Then for $j$ large enough the supremum of $\psi_j$ is  positive  and achieved on a  pair $(x_j, y_j) \in \Omega^2$, both of them converging to some maximum point $\bar x$ for $u-v$. Since $(x_j, y_j)$ converges to $( \bar x, \bar x)$, both of them  belong, for $j$ large enough, to some $\Omega^\prime \subset \subset \Omega$, independant on $j$.
 Furthermore, using  the Lipschitz estimate proved in Theorem \ref{theolip} : 
    $$  \sup ( u-v) \leq u(  x_j) -v(  y_j) - { j \over 2} | x_j- y_j|^2 \leq  \sup (u-v) + c_{ \Omega^\prime} |x_j-y_j| - { j \over 2} | x_j- y_j|^2,  $$
     from this one derives that 
     $j|x_j-y_j|$ is bounded. 
     
     Using Ishii's lemma , \cite{I},  \cite{usr},  there exist $X_j$ and $Y_j$ in $S$ such that 
             $(j(x_j-y_j), X_j) \in \overline{J}^{2,+} u(x_j)$, 
              $(j(x_j-y_j), -Y_j) \in \overline{J}^{2,-} v(y_j)$ and $X_j, Y_j$ satisfy 
               $$ -3j \left( \begin{array}{cc} 
                I &0\\0& I \end{array}\right) \leq \left( \begin{array}{cc} 
                X_j &0\\0& Y_j \end{array}\right) \leq 3j\left( \begin{array}{cc} 
                I &-I\\-I& I \end{array}\right). $$
We obtain,   denoting  the modulus of continuity of  $b$ by $\omega ( b , \delta)$ : 
             \begin{eqnarray*}
              g(x_j)- \gamma ( u(x_j))  &\geq & - F(j(x_j-y_j),X_j) +  b(x_j) | j(x_j-y_j)|^\beta\\
              &\geq & -F(j (x_j-y_j), -Y_j)  + b(y_j)   | j(x_j-y_j)|^\beta +c \omega (b, |x_j-y_j|) \\
                        &\geq &f(y_j) + o(1)  - \gamma (v(y_j)).\\
             \end{eqnarray*}
By passing to the limit,  one gets  on the point $\bar x$ limit of a subsequence of $x_j$ 
$$ g(\bar x) -\gamma ( u(\bar x))\geq f(\bar x)-\gamma(v(\bar x))$$
and in both cases we obtain a contradiction. 

\end{proof}

    \begin{proof} of Theorem \ref{exidir}
     We just give the hints to emphasize the difference with the operators and the results in \cite{BDL1}. We begin by  exhibit a sub- and a super-solution  which are zero on the boundary. 
     Suppose first that $\beta < \alpha+2$. Let us choose some constant $\kappa$  so that 
     $ \lambda \log(1+ \kappa) ^{1+ \alpha} > |f|_\infty$.  Let us  suppose $d <{ \kappa\over  C}$,  where $C$ will be chosen large enough depending on $|f|_\infty, |b|_\infty$ and on universal constants. 
      We can  assume that in $d < { \kappa\over C}$ the distance to the boundary is $ \mathcal{ C}^2$ and satisfies 
      $| \nabla d | = 1$. 
       Let us consider in $Cd < \kappa$ the function 
       $$\varphi(x) = \log ( 1+ Cd(x)).$$
        Then we have 
        \begin{eqnarray*}
         -F ( \nabla \varphi, D^2 \varphi) + b(x) | \nabla \varphi |^\beta &\geq& C^{2+\alpha} \left(a {\sum_1^N | \partial_i d |^{2+\alpha} \over( 1+ Cd)^{2+ \alpha}}- AC^{-1}  {|D^2 d|_\infty \sum_1^N |\partial_i d |^\alpha \over (1+ Cd)^{ \alpha+1}}\right) \\
         &-&C^\beta {|b|_\infty (\sum_1^N | \partial_i d |^2)^{ \beta \over 2}    \over (1+C d)^\beta} .
         \end{eqnarray*}
         Using the inequalities 
         
          $$\sum_i |\partial_i d |^{ \alpha+2} \leq (\sum_i |\partial_i d |^2)^{ \alpha+2 \over 2}, $$
           and 
           $$\sum_i |\partial_i d |^{ \beta} \leq (\sum_i |\partial_i d |^2)^{ \beta \over 2}, \ {\rm if} \ \beta>2, \ \sum_i |\partial_i d |^{ \beta}\leq (\sum \partial_i d^2 )^{ \beta \over 2} N^{1-{ \beta\over 2}}\ {\rm if\  not}, $$
            and analogous inequalities for $\sum | \partial_i d |^\alpha$,  one gets that for $Cd < \kappa$ there exist constants $\kappa_1, \kappa_2, \kappa_3,$  depending only on $a$, $A$ and on universal constants, so that 
          \begin{eqnarray*}
           -F ( \nabla \varphi, D^2 \varphi) &+& b(x) | \nabla \varphi |^\beta \\
           &\geq& \kappa_1 {C^{2+ \alpha} \over (1+ Cd)^{\alpha+2}}\left(1-C^{ \beta-\alpha-2} \kappa_2( 1+ Cd)^{ \beta-\alpha-2}- AC^{-1}  \kappa_3 (1+ Cd)^2\right) \\
           &\geq &{\kappa_1\over 2}   {C^{2+ \alpha} \over (1+\kappa )^{\alpha+2}}.
           \end{eqnarray*}
           as soon as $C$ is large enough,  more precisely  such that 
           $$C^{ \beta-\alpha-2} \kappa_2+AC^{-1}  \kappa_3 (1+ \kappa )^2<{1\over 2}$$
             and then assuming also $C$ so that 
           $  \kappa_1  {C^{2+ \alpha} \over (1+\kappa )^{\alpha+2}}>2 |f|_\infty$, we get that  $\varphi$ is a super-solution in $Cd < \kappa$. Extending it by $\log (1+ \kappa)$ in $Cd > \kappa$ and using the fact that the infimum of two super-solutions is a super-solution, we have the result. 
            To get a sub-solution take $-\varphi$ and adapt the constant. 
            
             Note that in the case $\beta = \alpha+2$ the previous  conclusion still holds if $|b|_\infty$ is small enough depending on universal constants. Note now that if $u$ is a supersolution of the equation 
             $$ -F(\nabla u, D^2 u) +b(x)  |\nabla u|^{ \alpha +2}  = \epsilon^{1+ \alpha}  f$$
              Then $u_\epsilon := { u \over \epsilon}$ satisfies 
              $$-F( \nabla u_\epsilon, D^2 u_\epsilon) + \epsilon b | \nabla u_\epsilon |^{ \alpha+2} = f$$
               and then a solution for the second problem gives one for the first one. 
               
                The existence and uniqueness  is then a direct consequence of the existence of these sub- and super-solutions and  of Perrron's method adpated to the context. We do not give the details.

                 \end{proof}

                \begin{remark}\label{nonhom}
                
                 In the sequel we will use a variant of this existence's result , that is to say, the boundary condition will  be $R$ in place of $0$. 
                 This can be done by taking for the super-solution $R+ \log ( 1+ Cd)$ in $Cd < \kappa$ extended by $R+ \log (1+ \kappa)$ in $Cd > \kappa$ and for the sub-solution  by taking for $k$ large enough $R-k \log (1+ Cd)$ in $Cd < \kappa$ extended by $R-k\log (1+ \kappa)$ in $Cd > \kappa$. 
                  \end{remark}

   \section{ Uniform Lipschitz estimates  when $\alpha \geq 2$ for unbounded solutions} \label{secLipuni}

    In this subsection we prove the following Lipschitz estimates : 
     \begin{proposition}\label{lipunif}  Let $F$ satisfy  \eqref{fnl},  \eqref{posh}  and \eqref{defF}, that   $\lambda\geq 0, $ $\alpha \geq 2$ and $\beta>\alpha+1$, let $u$ and $v$ be respectively a bounded  by above sub-solution and a bounded from below   super-solution of equation \eqref{eq45} in $B$, with $f$ Lipschitz continuous in $B$. Then, for any positive $p\geq \frac{(2+\alpha-\beta)^+}{\beta-\alpha-1}$, there exists a positive constant $M$, depending only on $p, \alpha, \beta, a, A, N, \|f-\lambda |u|^\alpha u\|_\infty$ and on the Lipschitz constant of $f$, such that, for all $x, y\in B$ one has
  $$
 u(x)-v(y)\leq \sup_B(u-v)^+ + M \frac{|x-y|}{(1-|y|)^{\frac{\beta}{\beta-\alpha-1}}} \left[ 1+\left( \frac{|x-y|}{(1-|x|)}\right)^p\right]
 $$
 \end{proposition}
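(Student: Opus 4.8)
The plan is to argue by contradiction via doubling of variables, using the very weight of the statement as the penalization. Write $g:=f-\lambda|u|^\alpha u$, so that $\|g\|_\infty=\|f-\lambda|u|^\alpha u\|_\infty$, and set
$$\psi(x,y)=\frac{|x-y|}{(1-|y|)^{\frac{\beta}{\beta-\alpha-1}}}\left[1+\left(\frac{|x-y|}{1-|x|}\right)^{p}\right],\qquad \Phi(x,y)=u(x)-v(y)-\sup_B(u-v)^+-M\,\psi(x,y),$$
where the constant $M$ is to be fixed at the end in terms of $p,\alpha,\beta,a,A,N,\|g\|_\infty$ and $\Lip(f)$. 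Since $u$ is bounded above, $v$ is bounded below and $\psi(x,y)\to+\infty$ whenever $x$ or $y$ tends to $\partial B$, if $\sup_B\Phi>0$ then, after adding if necessary a localizing term $L(|x-x_0|^2+|y-x_0|^2)$ and letting $L\to0$ as in the proof of Theorem~\ref{theolip}, the supremum is attained at an interior pair $(\bar x,\bar y)$; moreover $\bar x\neq\bar y$, since $\Phi(\bar x,\bar x)\le0$.

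Next I would apply Ishii's Lemma at $(\bar x,\bar y)$ to obtain $q^x=M\nabla_x\psi(\bar x,\bar y)$, $q^y=-M\nabla_y\psi(\bar x,\bar y)$ and $X,Y\in S$ with $(q^x,X)\in\overline{J}^{2,+}u(\bar x)$, $(q^y,-Y)\in\overline{J}^{2,-}v(\bar y)$, and with the block matrix $\mathrm{diag}(X,Y)$ bounded above by $2M$ times the Hessian $D^2\psi(\bar x,\bar y)$ in the matrix sense. Writing $w(\bar x,\bar y)=(1-|\bar y|)^{-\frac{\beta}{\beta-\alpha-1}}[1+(|\bar x-\bar y|/(1-|\bar x|))^{p}]$ for the amplitude of $\psi$, the leading part of $\nabla_x\psi$ is $w(\bar x,\bar y)(\bar x-\bar y)/|\bar x-\bar y|$ up to terms controlled by the derivatives of $w$, so $|q^x|$ and $|q^y|$ are comparable to $M\,w(\bar x,\bar y)$ and $|q^x-q^y|$ is controlled by $M$ times the first derivatives of $w$. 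Feeding the matrix inequality into the computations of \cite{BD2} then yields a negative second-order contribution
$$F(q^x,X)-F(q^y,-Y)\le -c_1\,M^{1+\alpha}\,w(\bar x,\bar y)^{1+\alpha}\,|\bar x-\bar y|^{-1}\,\Xi(\bar x,\bar y),$$
with $\Xi$ a bounded-below structural factor; here the hypothesis $\alpha\ge2$ is used exactly as in \cite{BD2}, to control the $x$-dependence of $F$ through $(P_1)$ by means of $\bigl||p_i|^{\alpha/2}-|q_i|^{\alpha/2}\bigr|\le c(|p_i|+|q_i|)^{\alpha/2-1}|p_i-q_i|$, so that that error stays subordinate to the $M^{1+\alpha}$ gain.

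Then, from the viscosity inequalities $-F(q^x,X)+|q^x|^\beta\le g(\bar x)$ and $-F(q^y,-Y)+|q^y|^\beta\ge f(\bar y)-\lambda|v(\bar y)|^\alpha v(\bar y)$, subtracting and inserting the second-order gain I would obtain
$$c_1M^{1+\alpha}w(\bar x,\bar y)^{1+\alpha}|\bar x-\bar y|^{-1}\Xi+\bigl(|q^x|^\beta-|q^y|^\beta\bigr)\le f(\bar x)-f(\bar y)-\lambda\bigl(|u(\bar x)|^\alpha u(\bar x)-|v(\bar y)|^\alpha v(\bar y)\bigr).$$
At the maximum point $u(\bar x)-v(\bar y)>\sup_B(u-v)^+\ge0$, hence $u(\bar x)>v(\bar y)$; since $\lambda\ge0$ and $s\mapsto|s|^\alpha s$ is nondecreasing, the zero-order difference on the right is $\ge0$ and can be discarded, leaving only $f(\bar x)-f(\bar y)\le\Lip(f)\,|\bar x-\bar y|$ — which is exactly why $M$ depends on $\Lip(f)$ but only on the $L^\infty$ norm of the zero-order term. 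It remains to bound $|q^x|^\beta-|q^y|^\beta$ below, the $|q^x|^\beta$ part being of order $(M\,w(\bar x,\bar y))^\beta$ while $|q^y|^\beta$ and the cross term $|q^x-q^y|$ contribute only lower-order corrections, and to check that $c_1M^{1+\alpha}w^{1+\alpha}|\bar x-\bar y|^{-1}\Xi+(Mw)^\beta$ exceeds $\Lip(f)|\bar x-\bar y|$ plus all error terms once $M$ is large, which gives the contradiction and hence $\sup_B\Phi\le0$, i.e. the claimed estimate.

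The superlinearity $\beta>\alpha+1$ is what makes $(Mw)^\beta$ outweigh every power $\le M^{1+\alpha}$ of $M$, so that a single large $M$ suffices; the lower bound $p\ge(2+\alpha-\beta)^+/(\beta-\alpha-1)$ is needed so that the factor $(|x-y|/(1-|x|))^p$ is large enough to absorb the mismatch between the $(1-|x|)^{-1}$-type blow-up of $|\nabla u|$ forced by the coercive term and the one encoded in the boundary weight $(1-|y|)^{-\beta/(\beta-\alpha-1)}$. The main obstacle, and the bulk of the work, is the bookkeeping of the three competing scales $|\bar x-\bar y|$, $1-|\bar x|$, $1-|\bar y|$: one must treat separately the genuinely interior regime where $|\bar x-\bar y|$ is small compared with $1-|\bar x|$, in which the bracket is essentially constant and the argument reduces to a localized version of Theorem~\ref{theolip}, from the regime where $|\bar x-\bar y|$ is comparable with or larger than $1-|\bar x|$, in which the $(\cdot)^p$ term drives the estimate and $\|g\|_\infty$ is used to close the inequality, and verify in both cases that the errors coming from the $x$-dependence of $F$ (via $(P_1)$) and from the second derivatives of the boundary weight remain of lower order than the gain. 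Matching the exponents so that one choice of $M$ is valid throughout $B$ is the delicate point; everything else is the now-standard doubling machinery.
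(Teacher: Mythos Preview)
Your outline has a genuine gap at the very step that distinguishes this proposition from Theorem~\ref{theolip}: the second-order contribution you claim,
\[
F(q^x,X)-F(q^y,-Y)\le -c_1\,M^{1+\alpha}w^{1+\alpha}|\bar x-\bar y|^{-1}\Xi,
\]
does not follow from the computations in \cite{BD2}. Those computations yield a negative eigenvalue only because the penalization there is \emph{strictly concave} in $|x-y|$ (either $|x-y|^\gamma$ with $\gamma<1$, or $\omega(|x-y|)$ with $\omega''<0$). Your penalization $\psi$ is $|x-y|(L+\xi^p)$, which is \emph{convex} in $|x-y|$ for fixed $x,y$-weights; there is no negative curvature to harvest. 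Consequently the matrix inequality only gives an upper, not a lower, bound on $F(q^x,X)-F(q^y,-Y)$. Moreover, your fallback term $|q^x|^\beta-|q^y|^\beta$ is of lower order than $|q^x|^\beta$, since $q^x$ and $q^y$ differ only by derivatives of the boundary weight; it cannot serve as the dominant positive term. So as written the contradiction never closes.

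The paper's proof avoids this by a completely different mechanism, the parameter trick from \cite{CDLP} that uses hypothesis~\eqref{posh} in an essential way: one conjugates Ishii's matrix inequality by $\mathrm{diag}(\sqrt{1+t}\,\Theta_\alpha(\nabla_x\phi),\Theta_\alpha(\nabla_y\phi))$, obtains from the sub/super-solution inequalities and the homogeneity $F(tX)=tF(X)$ an estimate of the form
\[
t\,|\nabla_x\phi|^\beta \;\le\; \Gamma_1\,t^2 + t\,\Gamma_2 + (\gamma_1+\gamma_2)\Gamma_4 + \Gamma_3 + t\,(g)^+ + f(x)-f(y),
\]
and then \emph{optimizes over} $t$ (choosing $t_0\sim|\nabla_x\phi|^\beta/\Gamma_1$) to get $|\nabla_x\phi|^{2\beta}\le C\,\Gamma_1(\ldots)$. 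The gain thus comes entirely from the coercive Hamiltonian $|\nabla_x\phi|^\beta$ through this optimization, not from any second-order negativity. The condition $\alpha\ge2$ enters only in bounding $\Gamma_4=\sum_i\bigl||\partial_{i,x}\phi|^{\alpha/2}-|\partial_{i,y}\phi|^{\alpha/2}\bigr|^2$ by the mean value theorem (so that $s\mapsto s^{\alpha/2}$ is $\mathcal C^1$), not in a $(P_1)$-type estimate as you suggest. The exponents $\tau$ and $p$ are then chosen so that each product $\Gamma_i\Gamma_j$ is negligible with respect to $|\nabla_x\phi|^{2\beta}$; this is the genuine bookkeeping.
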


         Due to the results in the previous subsection, in the case $\beta\leq 2+ \alpha $, the existence and uniqueness of $u_\lambda$ for equation \eqref{eq3} has been proved, and the Lipschitz bound  on $u_\lambda$  depends on the $L^\infty$ norm of $u_\lambda$ (more precisely on the oscillation of $u_\lambda$).  The  strength  of Proposition \ref{lipunif}  is that it provides bounds  on $u_\lambda$ independent on $\lambda$, as soon as $f-\lambda |u_\lambda|^\alpha u_\lambda $ is bounded. This will  allow  to  pass to the limit when $\lambda$ goes to zero in the next sections.

As in \cite{CDLP}, \cite{BDL2} it is sufficient to do the case  $\Omega = B(0,1)$. 
     
Let us define a "distance" function $d$ which  equals $1-|x|$ near the boundary and is extended as a smooth 
function   which has the properties 
      $$\left\{ \begin{array}{ccc}
      d(x) = 1-|x|& {\rm if} &\ |x| > {1\over 2}\\
        {1-|x|\over 2} \leq d(x) \leq 1-|x| & {\rm  for\  all} \ x\in \bar B& \ \\
         |Dd (x) | \leq 1 &-c_1 Id \leq  D^2 d (x) \leq 0 & \ {\rm for \ all} \ x\in \bar B
         \end{array} \right.$$
         for some constant $c_1>0$.

   Let us define as in \cite{CDLP}    $\xi = {|x-y| \over d(x)}$ and the function 
     $$\phi(x, y) = {k\over d(y)^\tau} |x-y| \left(L+ \xi^p \right)+ \sup ( u-v) $$
where $L$ and $k$ will be chosen large later, as well as $p$ and   $\tau$.     It is clear that if we prove that for such $k$ and $L$ one has  for all $(x,y) \in  B^2$ 
      
      $$ u(x)-v (y) \leq \phi(x, y), $$
       we are done. 
       
        So we suppose by contradiction that  $u(x)-v(y)-\phi(x,y)  >0$ somewhere,  then necessarily
          the supremum is  achieved on a pair $(x,y)$ with   $d(x) >0$,  $d(y) >0$ and $x \neq y$. Using Ishi's lemma, \cite{I}, \cite{usr}, one gets that on such a point, one has  for all $\epsilon>0$ the existence of  two symmetric matrices $X_\epsilon $ and $Y_\epsilon $,  such that 
        
          $$(\nabla_x \phi, X_\epsilon ) \in \overline{J}^{2,+} u( x), \ (-\nabla_y\phi, -Y_\epsilon ) \in \overline{J}^{2,-} v(y)$$ with 
           
         \begin{equation} \label{D2}-\left( {1\over \epsilon } + |D^2 \phi|\right) I_{2N} \leq \left(\begin{array}{cc} 
           X_\epsilon   & 0\\
           0& Y_\epsilon \end{array} \right) \leq D^2\phi + \epsilon  (D^2 \phi)^2.
           \end{equation} 
                     Since $u$ is a viscosity subsolution one has 
            $$ -F(\Theta_\alpha ( \nabla _x \phi ) X_\epsilon \Theta_\alpha ( \nabla _x \phi ) ) + |\nabla _x \phi |^{\beta} +\lambda|u|^\alpha  u(x) \leq f(x), $$
             while 
             $$ -F(-\Theta_\alpha ( \nabla _y\phi )Y_\epsilon  \Theta_\alpha ( \nabla _y \phi )) + |\nabla _y \phi |^{\beta} +\lambda |v|^\alpha v(y) \geq f(y).$$
Let   us multiply \eqref{D2} on the right  by 
              $$\left( \begin{array} {cc} 
             \Theta_\alpha( \nabla _x \phi ) & 0\\
              0&   \Theta_\alpha( \nabla _y\phi)  \end{array} \right)  \left( \begin{array} {cc} 
              \sqrt{1+t} \ I_N& 0\\
              0& I_N\end{array} \right)$$ where $t>0$,  and $I_N$ denotes the identity in $\mathbb R^N$, 
              and on the left, then we obtain that 
              
              \begin{eqnarray}\label{eqXY}
               &&\left( \begin{array} {cc} 
             \sqrt{1+t} \  \Theta_\alpha( \nabla _x \phi ) & 0\\
              0&   \Theta_\alpha( \nabla _y\phi)  \end{array} \right) \left(\begin{array}{cc} 
           X_\epsilon   & 0\\
           0& Y_\epsilon \end{array} \right)  \left( \begin{array} {cc} 
             \sqrt{1+t}\   \Theta_\alpha( \nabla _x \phi ) & 0\\
              0&   \Theta_\alpha( \nabla _y\phi)  \end{array} \right)  \nonumber\\
              &\leq& \left( \begin{array} {cc} 
              \sqrt{1+t} \  \Theta_\alpha( \nabla _x \phi ) & 0\\
              0&   \Theta_\alpha( \nabla _y\phi)  \end{array} \right) \left(D^2\phi \right) \left( \begin{array} {cc} 
            \sqrt{1+t}\  \Theta_\alpha( \nabla _x \phi ) & 0\\
              0&   \Theta_\alpha( \nabla _y\phi) 
               \end{array} \right) \nonumber\\
             & +&\left( \begin{array} {cc} 
            \sqrt{1+t} \  \Theta_\alpha( \nabla _x \phi ) & 0\\
              0&   \Theta_\alpha( \nabla _y\phi)  \end{array} \right) \left(\epsilon (D^2\phi)^2 \right)\left( \begin{array} {cc} 
             \sqrt{1+t}\  \Theta_\alpha( \nabla _x \phi ) & 0\\
              0&   \Theta_\alpha( \nabla _y\phi)  \end{array} \right)                \end{eqnarray}
               
               Note that by \eqref{posh},  and since $u$ and $v$ are respectively sub-and super-solutions,   one has 
                \begin{eqnarray*}
                 F(t  \Theta_\alpha( \nabla _x \phi )X_\epsilon  \Theta_\alpha( \nabla _x \phi ) ) &-& F((1+ t)  \Theta_\alpha( \nabla _x \phi )X_\epsilon \Theta_\alpha( \nabla _x \phi ) )\\
                 &+ &F(- \Theta_\alpha( \nabla _y\phi) Y_\epsilon \Theta_\alpha( \nabla _y\phi)  ) + \lambda (|u|^\alpha u (x)-|v|^\alpha v (y)) + |\nabla_x \phi|^\beta  \\
                 &-&|\nabla _y \phi|^\beta -f(x)+ f(y)\\
                 &\leq &0
                 \end{eqnarray*}
                 and then  using $u(x)-v(y) >0$
                 \begin{eqnarray*}
                  t  |\nabla_x \phi|^\beta & \leq& F( \nabla _x\phi, t X_\epsilon ) - t \lambda |u|^\alpha u (x)+ tf(x)  \nonumber\\
                  &\leq &F(\nabla _x\phi,  (1+t) X_\epsilon ) -F(\nabla _y \phi, -Y_\epsilon ) +  |\nabla_y \phi|^\beta   - |\nabla_x \phi|^\beta  +  t (f(x)-\lambda |u|^\alpha u (x))^+ \nonumber \\
                  &+&f(x )-f(y)\nonumber\\
                  &\leq& \mathcal{ M}^+ ( (1+ t)  \Theta_\alpha( \nabla _x \phi ) X_\epsilon  \Theta_\alpha( \nabla _x \phi )   + \Theta_\alpha( \nabla _y\phi )  Y_\epsilon   \Theta_\alpha( \nabla _y \phi ) ) \\
                  &+&  |\nabla_y \phi|^\beta -|\nabla_x \phi|^\beta +  t (f(x)-\lambda |u|^\alpha u (x))^+  + f(x)-f(y )\nonumber
                  \end{eqnarray*}
                                     Suppose that we get  an estimate of the form 
                   \begin{eqnarray}
                   \label{eqesti}\mathcal{ M}^+ ( (1+ t)  \Theta_\alpha( \nabla _x \phi ) X_\epsilon  \Theta_\alpha( \nabla _x \phi )  && + \Theta_\alpha( \nabla _y\phi )  Y_\epsilon   \Theta_\alpha( \nabla _y \phi )) \nonumber\\
                   &\leq& \psi (t, x, y,  D \phi, D^2 \phi)  + c \epsilon(1+t)\left(  |\Theta_\alpha ( \nabla_x\phi)|^2 \right. \nonumber \\
                & +&\left.  |\Theta_\alpha ( \nabla _y\phi)|^2\right)   |D^2 \phi|^2, 
                   \end{eqnarray}
                    for some function $\psi$,   then we  will  derive that 
                    \begin{eqnarray*}
                      t  |\nabla_x \phi|^\beta &\leq&   \psi (t, x, y,  D \phi, D^2 \phi) + c \epsilon(1+t) (|\Theta_\alpha ( \nabla_x\phi)|^2+ |\Theta_\alpha ( \nabla_y\phi)|^2 |D^2 \phi|^2\\
                      &+&  |\nabla_y \phi|^\beta - |\nabla_x \phi|^\beta 
                      +  t (f-\lambda |u|^\alpha u (x))^+  + f(x)-f(y ), 
                      \end{eqnarray*}
                                        and 
                    then letting $\epsilon$ go to $0$,  one gets 
                    $$  t  |\nabla_x \phi|^\beta \leq   \psi (t, x, y,  D \phi, D^2 \phi)+  |\nabla_y \phi|^\beta  - |\nabla_x \phi|^\beta +  t (f-\lambda |u|^\alpha u (x))^+  + f(x)-f(y ). $$    
                                                   Let us recall   some  useful estimates on  $\psi$    introduced   in   (\ref{eqesti}) : Using the computations  and the estimates in   \cite{CDLP}, one has  : 
$$\nabla_x \phi =  {k\over d(y)^\tau}\left( (L+ (1+ p) \xi^p) \eta -p \xi^{p+1} Dd (x)\right) , $$
                    where $\eta = {x-y\over |x-y|}$
                     and 
$$\nabla_y \phi = - {k\over d(y)^\tau}( L+ (1+ p) \xi^p) \eta+ \tau { k |x-y| \over  d(y)^{\tau+1}}\left( L + \xi^p\right) Dd(y).$$
                                   Note that one has 
                    $$|\nabla_x \phi|, |\nabla_y \phi | \leq c k {L+ \xi^{p+1}\over d(y)^{\tau+1}}  $$
                     and 
                      always like in  \cite{CDLP}   we can choose $L >1$  and large enough in order that 
                      $|\nabla_x \phi | \geq c k {(L+ \xi^p) (1+ \xi) \over d(y)^\tau}$. 
                                             We can sum up $D^2 \phi$ as follows 
                        \begin{eqnarray}\label{D2phi}
                        D^2 \phi &=&\gamma_1 \left( \begin{array} {cc} B& -B \\
                        -B& B  \end{array} \right) +\gamma_2 \left( \begin{array}{cc} 
                        T&-T\\
                        -T& T \end{array} \right)\nonumber\\
                        &+ &\gamma_3 \left( \begin{array} {cc} 
                        -(C+ ^t C ) &  C \\
                        ^t C& 0\end{array} \right)+\gamma_4 \left(\begin{array}{cc}
                        0&-^tD\\- D& (D+ ^t D) \end{array} \right)\nonumber \\
                        &+ &\left( \begin{array} {cc}
                         X_1& X_2 \\
                        X_3& X_4\end{array} \right)
                         \end{eqnarray}
                          with 
                           $B = I-\eta \otimes \eta, T = \eta\otimes \eta, C = \eta \otimes Dd(x), D = \eta \otimes Dd(y)$, and 
                                                             where 
                           $$\gamma_1 ={k\over d(y)^\tau}  { L+ ( 1+ p) \xi^p\over |x-y|},\ 
                            \gamma_2 = {k\over d(y)^\tau}  p(1+ p) { \xi^p \over |x-y|} ,\ 
                           \gamma_3 =  {k\over d(y)^\tau}  p (1+ p ) {\xi^p \over d(x)},$$
                          $$ \gamma_4 =  {k\over d(y)^\tau}  \tau {( L + (1+ p ) \xi^p )\over d(y)} $$
                            and $$X_1 ={k\over d(y)^\tau} \left(  {p( p +1) \xi^{p+1} \over d(x)} Dd (x) \otimes Dd (x) - p \xi^{p+1} D^2 d(x)\right), $$ 
                           $$  X_2 = {k\over d(y)^\tau}  {\tau p \xi^{p+1} \over d(y)}  Dd(x) \otimes Dd(y), \ 
                          X_3 =  {k\over d(y)^\tau}  { \tau p \xi^{p+1} \over d(y)}  Dd(y) \otimes Dd(x)$$
                             $$X_4 =  {k\over d(y)^\tau}  \left({ \tau (\tau+1) (L + \xi^p) |x-y| \over d(y)^2} Dd(y) \otimes Dd (y) -{\tau |x-y| \over d(y) } (L+ \xi^p) D^2 d(y) \right).$$
                                              Then    multiplying  ( \ref{D2phi})  by 
                                   $\left( \begin{array} {cc} \sqrt{1+t}& 0\\
                                   0& 1\end{array}\right) \left( \begin{array} {cc} 
             \Theta_\alpha( \nabla _x \phi ) & 0\\
              0&   \Theta_\alpha( \nabla _y\phi)  \end{array} \right) $ on the left and the right, one obtains                                                               

                        \begin{eqnarray*}
                         \psi(t,x, y, D\phi, D^2 \phi)&:=                         &\gamma_1 \left( \begin{array} {cc}(1+t)  \Theta_\alpha( \nabla _x \phi )B  \Theta_\alpha( \nabla _x \phi ) &-\sqrt{1+t} \  \Theta_\alpha( \nabla _x \phi )B \Theta_\alpha( \nabla _y \phi )\\
                             -\sqrt{ 1+t} \  \Theta_\alpha( \nabla _y \phi )B \Theta_\alpha( \nabla _x \phi ) &  \Theta_\alpha( \nabla _y \phi )B  \Theta_\alpha( \nabla _y \phi )\end{array} \right) \\
                             &+& \gamma_2 \left( \begin{array} {cc}(1+t)  \Theta_\alpha( \nabla _x \phi )T \Theta_\alpha( \nabla _x \phi ) &-\sqrt{1+t} \  \Theta_\alpha( \nabla _x \phi ) T \Theta_\alpha( \nabla _y\phi )\\
                             -\sqrt{ 1+t} \  \Theta_\alpha( \nabla _y \phi )T \Theta_\alpha( \nabla _x \phi ) &  \Theta_\alpha( \nabla _y \phi )T  \Theta_\alpha( \nabla _y \phi ) \end{array} \right) \\
                         &+& \gamma_3 \left(\begin{array}{cc}
                              -(1+t)  \Theta_\alpha( \nabla _x \phi )(C+ ^t C)  \Theta_\alpha( \nabla _x \phi )& \sqrt{1+t}\  \Theta_\alpha( \nabla _x \phi )^t C \Theta_\alpha( \nabla _y \phi )\\
                               \sqrt{1+t}\   \Theta_\alpha( \nabla _y \phi )C \Theta_\alpha( \nabla _x \phi ) & 0 \end{array} \right) \\
                              & +& \gamma_4 \left( \begin{array}{cc}
                              0& - \sqrt{1+t}\  \Theta_\alpha( \nabla _x \phi )D \Theta_\alpha( \nabla _y \phi )\\
                              - \sqrt{1+t}\  \Theta_\alpha( \nabla _y \phi )^t D \Theta_\alpha( \nabla _x \phi )& \Theta_\alpha( \nabla _y \phi ) (D+ ^t D )  \Theta_\alpha( \nabla _x \phi )\end{array} \right)\\
                             &+&  \left( \begin{array}{cc}  (1+t)  \Theta_\alpha( \nabla _x \phi )X_1 \Theta_\alpha( \nabla _x \phi )&  \sqrt{1+ t}  \Theta_\alpha( \nabla _x \phi )\ X_2 \Theta_\alpha( \nabla _y \phi ) \\
                             \sqrt{1+t} \   \Theta_\alpha( \nabla _y \phi )X_3 \Theta_\alpha( \nabla _x \phi ) &  \Theta_\alpha( \nabla _y \phi )X_4\Theta_\alpha( \nabla _y \phi ) \end{array} \right)
                              \end{eqnarray*}
           Multiplying the inequality ( \ref{eqXY}) by $( ^t v, ^t v)$ on the left and $\left(\begin{array}{c}v\\v\end{array}\right)$  on the right, where $v$ is any unit vector,  one gets  defining 
 $w _t= (\sqrt{1+ t} \  \Theta_\alpha ( \nabla_x\phi)  -   \Theta_\alpha ( \nabla _y \phi) )(   v)$ 
                                                           \begin{eqnarray}\label{eigenvalue}
                           ^t v ( (1+t)  \Theta_\alpha( \nabla _x \phi )X \Theta_\alpha( \nabla _x \phi ) &&+ \Theta_\alpha( \nabla _y \phi ) Y \Theta_\alpha( \nabla _y \phi )) v \nonumber\\
                           &\leq&\gamma_1 
                            ^t w_t   B w_t  +
               \gamma_2 ^t w_t  T  w_t+ c \gamma_3 t (|C|+ |^t C |)( | \Theta_\alpha ( \nabla_x \phi) |^2+ | \Theta_\alpha ( \nabla_y \phi) |^2) \nonumber\\
                           &+&   \gamma_4 t (|D|+ |^t D|)( | \Theta_\alpha (  \nabla_x  \phi) |^2+ | \Theta_\alpha ( \nabla_y \phi)|^2 ) \nonumber\\
                           &+&  \gamma_3( |C|+ |^t C| ) | \Theta_\alpha (  \nabla_x  \phi) - \Theta_\alpha ( \nabla_y \phi)  | \    | \Theta_\alpha (  \nabla_x  \phi)| + | \Theta_\alpha ( \nabla_y \phi)  | |v|^2\nonumber\\
                           &+ &\gamma_4 (|D|+ |^t D|)  | \Theta_\alpha (  \nabla_x  \phi) - \Theta_\alpha ( \nabla_y \phi)  |  ( | \Theta_\alpha (  \nabla_x  \phi) + \Theta_\alpha ( \nabla_y \phi))  | v|^2\nonumber\\
                           &+ &(\sqrt{1+t} ( |X_1| + |X_2|) + |X_3|) + |X_4|)    ( | \Theta_\alpha (  \nabla_x  \phi) +  \Theta_\alpha ( \nabla_y \phi)  | |v|^2.
                                                    \end{eqnarray}
                                        Note that 
                                                                                                    \begin{eqnarray*}
                                                      |w_t|^2 & \leq &2 ( (\sqrt{1+t}-1)^2  |\Theta_\alpha (  \nabla_x  \phi) |^2 + 2|\sum_i  |\partial_{i, _x} \phi |^{ \alpha\over 2}- |\partial_{i, _y}  \phi |^{ \alpha \over 2}|^2\\
                                                      &\leq  & 2 t^2 |\Theta_\alpha ( \nabla_x \phi) |^2 + 2|\sum_i  |\partial_{i, _x} \phi |^{ \alpha\over 2}-|\partial_{i, _y}  \phi |^{ \alpha \over 2}|^2. 
                                                       \end{eqnarray*}
                                Using ( \ref{eigenvalue}), every eigenvalue of  $(1+ t) X + Y$ satisfies 
                              $  \lambda_i (( 1+ t) \Theta_\alpha ( \nabla_x \phi)X \Theta_\alpha( \nabla _x \phi )+\Theta_\alpha( \nabla _y\phi ) Y\Theta_\alpha( \nabla _y \phi )) \leq c t^2 \Gamma_1 + t \Gamma_2 +( \gamma_1 + \gamma_2) \Gamma_4+  \Gamma_3$
                               for some  universal constant $c$, 
                             where we have denoted                                 $$\Gamma_1 =( \gamma_1+ \gamma_2)  |\Theta_\alpha ( \nabla_x \phi) |^2  \leq  ck^{1+ \alpha} {( L+ \xi^{p+1})^{1+\alpha } \over d(y)^{ \tau+ (\tau+1)\alpha} |x-y|}.$$
                                 
                                                         $$\Gamma_2:=  (\gamma_3 + \gamma_4+ |X_1 | + |X_2| + |X_3| ) (|\Theta_\alpha ( \nabla_x \phi) |^2  + |\Theta_\alpha ( D_y \phi) |^2 ) \leq 
                                                         ck^{1+ \alpha} {(L+  \xi^{p+2})( L+ \xi^{p+1}) ^\alpha \over d(y)^{(\tau+1)(1+ \alpha)}} .$$

                                                         $$\Gamma_3 := \sum_1^4 |X_i | \leq k^{1+ \alpha} |x-y| {( L + \xi^{p+2}) ( L + \xi^{p+1})^\alpha \over d(y)^{2+\tau+ (\tau+1) \alpha}}.$$
                         Finally $$\Gamma_4 :=  |\sum_i  |D_{i, _x} \phi |^{ \alpha\over 2}-| |D_{i, _y}  \phi |^{ \alpha \over 2}|^2.$$
       To majorize $\Gamma_4$ observe that         If $\xi \leq 1$, 
$${ |\xi|^{ p+1} |Dd | k  \over d(y)^\tau} \leq { \xi^p |x-y| \over d(x) d(y)^\tau} k \leq 2k|x-y|{ \xi^p
\over d(y)^{ \tau+1}}$$
while if $\xi \geq 1$,
 $${ \xi^{p +1} k \over d(y)^\tau} \leq  2k (1+ \xi^{p+1}) { |x-y| \over d(y)^{ \tau+1}}$$
  As a consequence 
  $|D_{ix} \phi+  D_{iy} \phi | \leq  c k ( L+ \xi^{p+1}){ | x-y|\over d(y)^{ \tau+1}}$. 
    
  Then  since  $\alpha \geq 2$ ( this is the only point where this restriction is  required),  one has  by the mean value's theorem 
  \begin{eqnarray*}
 \Gamma_4 = \sum_i |  |D_{ix} \phi|^{\alpha\over 2 } -|D_{iy }\phi|^{\alpha\over 2 }|^2&\leq & c\sum_i |D_{ix} \phi + D_{iy} \phi |^2 ( |D_{ix} \phi |^{{\alpha\over 2} -1} +  |D_{iy}  \phi |^{{ \alpha\over 2}  -1}) ^2\\
                                      &\leq &c { k^{\alpha} |x-y| ^2\over  d(y)^{(\tau+1)\alpha }}\left( L+ \xi^{p+1}\right)^{\alpha } \\
 \end{eqnarray*}

 and then 
$$\Gamma_4( \gamma_1 + \gamma_2) = | \sum_i(|D_{ix} \phi|^{\alpha \over 2} - |D_{iy} \phi|^{\alpha\over 2 })|^2 )  k {(L + \xi^{p+1})\over d(y)^\tau  |x-y|  }\leq c k^{\alpha+1} |x-y|{(L+ \xi^{p+1})^{\alpha +1} \over d(y)^{\tau ( \alpha+1)   + \alpha }}.$$
We now choose 
$ \tau > {2( \alpha+1 ) \over \beta-1-\alpha}  $, and  $p > { 2\alpha+3 \over \beta-( \alpha+1)}$, which imply   in particular that    by taking $k$ and $L$ large enough one has 
                                                               $\Gamma_2 <{|\nabla_x \phi |^{\beta}\over 2} $.    We have obtained that                                                                  
                                  
                                     $${ t\over 2}  |\nabla_x \phi|^{\beta} \leq t(f(x)-\lambda |u|^\alpha u (x)) + |D_y\phi|^{\beta} -|\nabla_x \phi |^{\beta} + (\Gamma_1t^2  + \Gamma_3+( \gamma_1+ \gamma_2) \Gamma_4).$$
                                                                                        Note now that we can choose $t$ optimal or equivalently 
                                  $t_o = { | \nabla_x  \phi|^\beta \over 4 \Gamma_1} $ and with this value of 
                                  $t_o$ one has 
                                   $$   | \nabla_x  \phi|^{2\beta}   \leq  16 \Gamma_1 t(f(x)-\lambda |u|^\alpha u (x)) +  \Gamma_1(|\nabla_y\phi|^{\beta} -| \nabla_x  \phi |^{\beta} )+  \Gamma_3\Gamma_1+ \Gamma_1 ( \gamma_1+ \gamma_2)  \Gamma_4).$$
                                                                         There remains to see that from this one derives a contradiction, indeed, 
                                     the left hand side is greater than 
                                      $ \left( { L + \xi^{p+1}\over d(y) ^\tau}\right)^{2 \beta} $ while 
                                      $$\Gamma_1 \Gamma_3 \leq c k^{2(1+ \alpha)} { ( L^{2(1+ \alpha)} + \xi^{ 2(p+2)( \alpha+1)} )\over d(y)^{ 2(\tau+1) ( \alpha +1)}}$$
                                       which is negligeable w.r.t. $| D _x \phi | ^{ 2 \beta}$ by the choice of $\tau$ and $p$.                                         Furthermore 
                                       \begin{eqnarray*}
                                       ( \gamma_1+ \gamma_2) \Gamma_4 \Gamma_1 &\leq& c  {k\over d(y)^\tau}  { L+ ( 1+ p) ^2\xi^p\over |x-y|}k^{1+ \alpha} {( L+ \xi^{p+1})^{1+\alpha } \over d(y)^{ \tau+ (\tau+1)\alpha} |x-y|}
                                       k^{ \alpha }  { |x-y|^2  ( L+ \xi^{p+1})^{\alpha}  \over ( d(y)^{( \tau+1) \alpha  }}\\
                                       &\leq & c k^{2(1+ \alpha)}{ ( L+ \xi^{p+1})^{ 2(1+ \alpha)} \over d(y)^{ 2\tau( \alpha+1)+ 2\alpha}}
                                       \end{eqnarray*}
                                       which is small w.r. t. $| \nabla _x \phi | ^{ 2 \beta}$ by the choice of $\tau$,  $p$, $k$ and $L$ .
                                        Finally 
                                        $$16 \Gamma_1 t |f-\lambda |u|^\alpha |_\infty   \leq 16 |  \nabla_x  \phi |^\beta  |f-\lambda |u|^\alpha |_\infty $$ which is small with respect to $|  \nabla_x  \phi|^{2 \beta}$ as soon as $L$ and $k$ are chosen large.                                        
                                      Furthermore 
                                      $$ | | \nabla_x  \phi |^{ \beta }-| \nabla_y \phi |^{ \beta }|\leq c |  \nabla_x  \phi+ \nabla_y \phi | (k {L+ \xi^{ p+1} \over d(y)^{\tau+1} })^{ {\beta }-1} \leq ck^{ \beta }  
                                      { |x-y|  ( L+ \xi^{p+1})^{\beta }  \over  d(y)^{ (1+ \tau){ \beta } }}$$
                                         and then 
                                                      
                                         $$| | \nabla_x  \phi |^{ \beta }-| D_y \phi |^{ \beta }|\Gamma_1 \leq 
                                         k^{ \beta+ 1+ \alpha} {( L + \xi^{p+1})^{\beta + 1+ \alpha} \over d(y)^{ \tau(  \alpha+1)+ \alpha + (\tau+1)  \beta }  }$$
                                          which is also small with respect to $|  \nabla_x  \phi |^{2 \beta}$ using the assumptions on $p$ and $\tau$.  
                                       We have obtained a contradiction.  Finally 
                                       $\phi(x,y) \leq 0$, 
                                       which implies  some Lipschitz estimate.                                            
                                          Arguing as in \cite{CDLP}, one can obtain an optimal behaviour of the  gradient of $u$ when $u$ is a solution , in the form 
              $$ | \nabla u _\lambda| \leq  cd^{-\gamma-1}. $$
              Furthermore,  these estimates can easily be extended to a $\mathcal{C}^2$  domain in place of a ball, using  the interior sphere property, and replacing of course 
               $1-|x|$ by $d(x, \partial \Omega)$.

 \section{Existence and behaviour near the boundary of ergodic function.}\label{sexiergo}
 In this section we prove the existence of solutions for  equation \eqref{eq3},  blowing up at the boundary, which will be used
in the proof of  existence of  ergodic pairs. In what follows we drop the assumption on the boundedness of the right hand side $f$, and we  consider continuous functions in $\Omega$, possibly unbounded as $d(x)\to 0$.  This  extension will be needed in particular to prove the  exact blow up estimate on the ergodic function in Theorem \ref{exiergo}.

\begin{theorem}\label{exilambda} Let  $\alpha \geq 0$,  $\beta  \in (\alpha+1,\alpha+2]$,  $\lambda >0$ and  let $F$ satisfy \eqref{fnl}, \eqref{posh}  and \eqref{defF}. Let further $f\in \mathcal{C}(\Omega)$  be  bounded from below  and such  that  
 \begin{equation}\label{f1}
\lim_{d(x)\to 0} f(x) d(x)^{\frac{\beta}{\beta-1-\alpha }} = 0\, .
\end{equation} 
 Then,   the infinite boundary value problem
                     \begin{equation}\label{eqlambdainfini}\left\{ \begin{array}{cl}
                     -| \nabla u |^\alpha F(D^2 u) + |\nabla u |^\beta   + \lambda |u|^\alpha u  = f &  \hbox{ in } \ \Omega \, ,\\
                     u=+\infty  & \hbox{ on } \ \partial \Omega\, ,
                     \end{array}\right.\end{equation}
admits solutions, and  any its solution $u$  satisfies, for all $x\in \Omega$,
\begin{equation}\label{bou}
\begin{array}{c}
\displaystyle \frac{c_0}{d(x)^\gamma}-\frac{D_1}{\lambda^{\frac{1}{\alpha+1}}} \leq u(x)\leq \frac{C_0}{d(x)^\gamma}+\frac{D_1}{\lambda^{\frac{1}{\alpha+1}}}
\qquad  \hbox{ if } \gamma>0\, ,\\[3ex]
\displaystyle c_0 |\log d(x)|-\frac{D_1}{\lambda^{\frac{1}{\alpha+1}}} \leq u(x)\leq  C_0 |\log d(x)|+\frac{D_1}{\lambda^{\frac{1}{\alpha+1}}}
\qquad  \hbox{ if } \gamma=0\, ,
\end{array}
\end{equation}
for positive constants $c_0, C_0$ and $D_1$ depending only on $\alpha, \beta, a, A, |d|_{\mathcal{C}^2(\Omega)}$ and on $|f|_\infty$.
\end{theorem}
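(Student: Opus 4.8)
\medskip
\noindent\emph{Strategy.} Set $\gamma=\dfrac{\alpha+2-\beta}{\beta-\alpha-1}\ge 0$ (so $\gamma>0$ iff $\beta<\alpha+2$) and record the identity $(\gamma+1)\beta=\gamma(\alpha+1)+\alpha+2=\dfrac{\beta}{\beta-\alpha-1}$, the last number being the exponent in \eqref{f1}. The first task is to build a global supersolution $\overline\psi$ and a global subsolution $\underline\psi$ of \eqref{eqlambdainfini}, both $\equiv+\infty$ on $\partial\Omega$: for $\gamma>0$ I would take $\overline\psi=C_0 d^{-\gamma}+D_1\lambda^{-1/(\alpha+1)}$ and $\underline\psi=c_0 d^{-\gamma}-D_1\lambda^{-1/(\alpha+1)}$, with $d^{-\gamma}$ replaced by $|\log d|$ when $\gamma=0$; these are $\mathcal{C}^2$ in $\Omega$ because $d$ is. Inserting $\psi=\chi(d)$ into the equation and using \eqref{posh}, $|\nabla d|=1$ and $|D^2d|$ bounded near $\partial\Omega$, the second–order part dominates there and the left side of \eqref{eqlambdainfini} equals
$$\bigl(\kappa_1 C_0^{\beta}-\kappa_2 C_0^{\alpha+1}\bigr)\,d^{-(\gamma+1)\beta}+(\text{lower order as }d\to 0),$$
with $\kappa_1,\kappa_2>0$ universal, the remaining terms being the zero–order term (of order $d^{-\gamma(\alpha+1)}$) and $f$ (which is $o(d^{-(\gamma+1)\beta})$ by \eqref{f1}). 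Since $\beta>\alpha+1$, for $C_0$ large the bracket is positive, so $C_0 d^{-\gamma}$ is a supersolution in a fixed neighbourhood $\{d<\delta_0\}$, $\delta_0$ depending only on $a,A,|d|_{\mathcal{C}^2}$; for $c_0$ small it is negative, giving a subsolution. As $s\mapsto\lambda|s|^\alpha s$ is increasing, adding a positive constant preserves supersolutions and subtracting one preserves subsolutions; hence, for $D_1$ large — using the local boundedness of $f$ on $\overline{\Omega_{\delta_0}}$ for $\overline\psi$, the lower bound of $f$ for $\underline\psi$, and the fact that $\lambda\bigl(D_1\lambda^{-1/(\alpha+1)}\bigr)^{\alpha+1}=D_1^{\alpha+1}$ is $\lambda$–free — $\overline\psi$ and $\underline\psi$ are global super/sub-solutions of \eqref{eqlambdainfini}, with $c_0,C_0,D_1$ as in the statement and independent of $\lambda$. (For the sharper estimate of Theorem \ref{exiergo} one repeats this keeping the exact leading constant $C(x)$ of \eqref{C(x)}.)

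\emph{Existence.} For $n$ large enough that $\Omega_{1/n}$ is a $\mathcal{C}^2$ domain, let $u_n$ solve \eqref{eq45} in $\Omega_{1/n}$ with $u_n\equiv\underline\psi$ (a constant) on $\partial\Omega_{1/n}=\{d=1/n\}$; this exists and is unique by Theorem \ref{exidir} and Remark \ref{nonhom} ($b\equiv1$, and $f$ is bounded on $\overline{\Omega_{1/n}}\subset\Omega$). As $\underline\psi,\overline\psi$ are bounded on $\overline{\Omega_{1/n}}$ and the zero–order term is increasing, Theorem \ref{thcompar} yields $\underline\psi\le u_n\le\overline\psi$ in $\Omega_{1/n}$; moreover, since $\overline{\Omega_{1/n}}\subset\Omega_{1/(n+1)}$, one gets $u_{n+1}\ge\underline\psi=u_n$ on $\partial\Omega_{1/n}$, hence $u_{n+1}\ge u_n$ in $\Omega_{1/n}$. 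So $(u_n)$ is nondecreasing and $u:=\lim_n u_n$ satisfies $\underline\psi\le u\le\overline\psi$ in $\Omega$; being locally equi-bounded, $(u_n)$ is locally equi-Lipschitz by the interior estimate of Theorem \ref{theolip} (in its extension to subdomains), so the convergence is locally uniform and, by stability of viscosity solutions, $u$ solves \eqref{eqlambdainfini} in $\Omega$, with $u\to+\infty$ on $\partial\Omega$ since $u\ge\underline\psi$. This produces a solution, which already satisfies \eqref{bou}.

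\emph{A priori bounds; the main obstacle.} Let $w$ be any solution of \eqref{eqlambdainfini}. For the upper bound, compare $w$ on $\Omega_\delta$ with the shifted supersolution $\overline\psi^{(\delta)}(x):=C_0(d(x)-\delta)^{-\gamma}+D_1\lambda^{-1/(\alpha+1)}$ ($|\log(d-\delta)|$ variant if $\gamma=0$): it is still a supersolution in $\Omega_\delta$, the relevant bounds on $d$ being uniform, and it blows up on $\partial\Omega_\delta$, while $w$ is finite and continuous up to $\partial\Omega_\delta$; therefore $w-\overline\psi^{(\delta)}\to-\infty$ at $\partial\Omega_\delta$, its supremum is attained at an interior point, and the doubling-of-variables argument of Theorem \ref{thcompar}, applied there, gives $w\le\overline\psi^{(\delta)}$ in $\Omega_\delta$; letting $\delta\to0$ yields the upper bound in \eqref{bou}. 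The lower bound is the step I expect to be the main difficulty: one wants $w\ge\underline\psi$, but $w$ and $\underline\psi$ both blow up on $\partial\Omega$, so they cannot be compared directly, and the trick used for the upper bound has no symmetric counterpart. Following \cite{LL}, \cite{CDLP}, \cite{BDL2}, the plan is to combine the upper bound just obtained, the interior gradient estimate $|\nabla w|\le c\,d^{-\gamma-1}$ (legitimate now that $w$ is locally bounded, cf. the remark after Proposition \ref{lipunif}), and the strict monotonicity of the zero–order term to show that the blow-up of $w$ at $\partial\Omega$ must occur at rate at least $c_0 d^{-\gamma}$; this gives $\min_{\{d=\delta\}}w\ge\underline\psi|_{\{d=\delta\}}$ for all small $\delta$, and then comparing $w$ with $\underline\psi$ on $\Omega_\delta$ (Theorem \ref{thcompar}) and letting $\delta\to0$ yields the lower bound in \eqref{bou}. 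Confining all $\lambda$–dependence to the additive term $D_1\lambda^{-1/(\alpha+1)}$ is precisely what keeps the constants in \eqref{bou} independent of $\lambda$.
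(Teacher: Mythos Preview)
Your construction of the global sub- and supersolutions is correct and in fact slightly cleaner than the paper's (you avoid the exponential cutoff by letting the additive constant $D_1\lambda^{-1/(\alpha+1)}$ and the zero-order term do all the work in the interior). Your existence scheme via an increasing sequence of solutions on $\Omega_{1/n}$ is a legitimate variant of the paper's approach, which instead takes $u_R$ solving the Dirichlet problem in all of $\Omega$ with boundary value $R$ and truncated right-hand side $f_R=\min(f,R)$; both yield a monotone, locally bounded, locally Lipschitz sequence converging to the desired blow-up solution.

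There is, however, a genuine gap in your treatment of the a priori \emph{lower} bound, and it is precisely the part you flag as the ``main obstacle''. Your proposed route through the interior gradient estimate $|\nabla w|\le c\,d^{-\gamma-1}$ relies on Proposition~\ref{lipunif} (or the remark after it), but that estimate is established only for $\alpha\ge 2$ and $f$ locally Lipschitz, neither of which is assumed in Theorem~\ref{exilambda}. The paper explicitly notes after its proof that the uniform Lipschitz estimates of Section~\ref{secLipuni} are \emph{not} used here, so that the full range $\alpha\ge 0$ is covered.

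The correct device, which you missed, is symmetric to your upper-bound trick but with the shift going the other way: use the subsolution $\underline\psi^{(s)}(x)=c_0(d(x)+s)^{-\gamma}-D_1\lambda^{-1/(\alpha+1)}$ (resp.\ $c_0|\log(d+s)|-D_1\lambda^{-1/(\alpha+1)}$ when $\gamma=0$). For each $s>0$ this function is \emph{bounded} on $\overline\Omega$, while any solution $w$ of \eqref{eqlambdainfini} tends to $+\infty$ at $\partial\Omega$; hence $w\ge\underline\psi^{(s)}$ on $\partial\Omega$ in the obvious limiting sense, and the comparison principle of Theorem~\ref{thcompar} gives $w\ge\underline\psi^{(s)}$ in $\Omega$. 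Letting $s\to 0$ yields the lower bound in \eqref{bou}. No gradient estimate is needed, and no restriction on $\alpha$ arises.
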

 
   When $F$ satisfies furthermore   \eqref{C(x)} one has a better estimate : 

\begin{theorem}\label{precest} Let  $\beta  \in (\alpha+1,\alpha+2]$,  $\lambda >0$ and  let $F$ satisfy \eqref{defF}, \eqref{fnl}  and \eqref{C(x)}. Let further $f\in \mathcal{C}(\Omega)$  be  bounded from below  and such  that  
\begin{equation}\label{f}
\lim_{d(x)\to 0} f(x) d(x)^{\frac{\beta}{\beta-1-\alpha }-\gamma_0} = 0\, ,
\end{equation} 
for some $ \gamma_0\geq 0$.
 Then,  any solution $u$ of \eqref{eqlambdainfini} satisfies:  for any $\nu >0$ and for any $0\leq \gamma_1\leq \gamma_0$, with $\gamma_1< \inf (1, \alpha) $, and $\gamma_1<\gamma$ when $\gamma>0$, there exists $D=\frac{D_1}{\lambda^{1/(\alpha+1)}}$, with $D_1>0$ depending on $\nu, \gamma_1, \alpha, \beta, a, A, |d|_{\mathcal{C}^2(\Omega)}, |C(\cdot)|_{\mathcal{C}^2(\Omega)}$ and on $|f|_\infty$, such that, for all $x\in \Omega$,
\begin{equation}\label{bouest}
\begin{array}{c}
\displaystyle \frac{C(x)}{d(x)^\gamma}-\frac{\nu}{d(x)^{\gamma-\gamma_1}}-D\leq u(x)\leq \frac{C(x)}{d(x)^\gamma}+\frac{\nu}{d(x)^{\gamma-\gamma_1}}+D
\ \hbox{ if } \gamma>0\, ,\\[2ex]
|\log d(x)|\left( C(x) - \nu d(x)^{\gamma_1}\right)-D\leq u(x)\leq  |\log d(x)|\left( C(x) +\nu d(x)^{\gamma_1}\right)+D
\ \hbox{ if } \gamma=0\, .
\end{array}
\end{equation}
 Furthermore, the solution $u$ is unique.

\end{theorem}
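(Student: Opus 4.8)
\textbf{Proof strategy for Theorem \ref{precest}.}
The plan is to refine the two-sided bound of Theorem \ref{exilambda} by sharpening the barriers near $\partial\Omega$ so that the leading coefficient becomes exactly $C(x)$ rather than a pair of constants $c_0<C_0$, and then to deduce uniqueness from a comparison argument. First I would work in a tubular neighborhood $\{d(x)<\delta_0\}$ where $d$ is $\mathcal C^2$, $|\nabla d|=1$, and where $C(\cdot)$ is $\mathcal C^2$ by \eqref{C(x)}. Following the construction already used for the crude estimate, I would look for a supersolution of the form
$$
\Phi^+(x)=\frac{C(x)+\nu d(x)^{\gamma_1}}{d(x)^{\gamma}}+D
$$
(and the analogous subsolution $\Phi^-$ with $-\nu$ and $-D$), with $\gamma=\frac{2+\alpha-\beta}{\beta-1-\alpha}$ the exponent already identified, and plug it into the operator $-F(\nabla u,D^2u)+|\nabla u|^\beta+\lambda|u|^\alpha u$. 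The key point is that the leading term of $-F(\nabla\Phi^+,D^2\Phi^+)+|\nabla\Phi^+|^\beta$, which is of order $d^{-\gamma\beta-\beta}=d^{-(\gamma+1)\beta}$, cancels exactly when the coefficient is $C(x)$: this is precisely the algebraic content of the definition \eqref{C(x)}, namely $(\gamma+1)F(\nabla d,\nabla d\otimes\nabla d)=(\gamma C(x))^{\beta-\alpha-1}$, so that the homogeneous problem $-F(\nabla(Cd^{-\gamma}),D^2(Cd^{-\gamma}))+|\nabla(Cd^{-\gamma})|^\beta=0$ holds to leading order. The correction term $\nu d^{\gamma_1}/d^\gamma=\nu d^{\gamma_1-\gamma}$ produces, upon differentiation, a term of strictly lower order (by a factor $d^{\gamma_1}$ against the leading order, using $\gamma_1>0$) whose sign can be made favorable by choosing the sign of the $\pm\nu$ correction appropriately; the hypotheses $\gamma_1<\inf(1,\alpha)$ and $\gamma_1<\gamma$ are exactly what is needed to control the cross terms coming from the diagonal matrix $\Theta_\alpha$ and the $D^2 d$, $D^2 C$ contributions, via the properties $(P_1)$, $(P_2)$ of $F$. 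The constant $D=D_1\lambda^{-1/(\alpha+1)}$ absorbs the right-hand side $f$: here assumption \eqref{f} with exponent $\gamma_1$ guarantees that $f(x)d(x)^{\gamma\beta}\to 0$ fast enough, while the zero-order term $\lambda|u|^\alpha u$ evaluated on $\pm D$ contributes $\mp\lambda D^{\alpha+1}=\mp D_1^{\alpha+1}$, which dominates $|f|_\infty$ once $D_1$ is large, exactly as in the proof of Theorem \ref{exilambda}.

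Once the barriers $\Phi^\pm$ are constructed on $\{0<d<\delta_0\}$ with the correct boundary behavior ($\Phi^+=+\infty$, $\Phi^-=+\infty$ on $\partial\Omega$; bounded on $\{d=\delta_0\}$), I would compare them with a given solution $u$ of \eqref{eqlambdainfini}. On the inner boundary $\{d=\delta_0\}$ the solution $u$ is finite and, by the interior estimates and the crude bound \eqref{bou}, stays in a fixed compact range, so enlarging $D$ makes $\Phi^-\leq u\leq\Phi^+$ there; on $\partial\Omega$ all three are $+\infty$. Then I would invoke the comparison principle — the version in Theorem \ref{thcompar} applied with $\gamma(s)=\lambda|s|^\alpha s$ (which is increasing, so comparison holds even with equal right-hand sides) on the annular region $\{0<d<\delta_0\}$, after the standard truncation to handle the infinite boundary datum, letting the truncation level go to $+\infty$ — to conclude $\Phi^-\leq u\leq\Phi^+$ throughout the tubular neighborhood. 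This yields \eqref{bouest} on $\{d<\delta_0\}$; away from the boundary the bound \eqref{bouest} reduces to the trivial statement that $u$ is locally bounded, which follows from Theorem \ref{exilambda} together with the interior Lipschitz estimate.

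For uniqueness, suppose $u_1,u_2$ are two solutions of \eqref{eqlambdainfini}. The estimates \eqref{bouest} just obtained show that $u_1-u_2\to 0$ as $d(x)\to 0$: indeed both are squeezed between $\frac{C(x)}{d^\gamma}\pm\frac{\nu}{d^{\gamma-\gamma_1}}\pm D$, so $|u_1-u_2|\leq \frac{2\nu}{d^{\gamma-\gamma_1}}+2D$ near the boundary — this is not quite enough by itself, but since $\nu>0$ is arbitrary we actually get that for every $\varepsilon>0$ there is a neighborhood of $\partial\Omega$ on which $|u_1-u_2|\leq\varepsilon\,d^{\gamma_1-\gamma}+2D$; dividing by the diverging factor is the wrong direction, so instead I would argue as follows: fix $\varepsilon>0$, note $u_1-\varepsilon$ is a subsolution of the equation with $f$ replaced by $f-\lambda(|u_1-\varepsilon|^\alpha(u_1-\varepsilon)-|u_1|^\alpha u_1)\le f$ up to controlled error, compare $u_1-\varepsilon$ against $u_2$ on $\Omega$ using that on $\partial\Omega$ both blow up at the identical rate $C(x)d^{-\gamma}$ so the difference $u_1-\varepsilon-u_2$ is bounded above near $\partial\Omega$, apply the comparison principle of Theorem \ref{thcompar} (with the strictly increasing zero-order term $\gamma(s)=\lambda|s|^\alpha s$), and let $\varepsilon\to 0$ to get $u_1\le u_2$; by symmetry $u_1=u_2$. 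The main obstacle — and the step deserving the most care — is the barrier computation: one must verify that after the multiplication by $\Theta_\alpha(\nabla\Phi^\pm)$ the genuinely degenerate operator still sees the cancellation encoded in \eqref{C(x)}, and that all error terms (those from $D^2 C$, $D^2 d$, the cross terms between the $C(x)$ part and the $d^{-\gamma}$ part, and the subprincipal $\nu d^{\gamma_1-\gamma}$ term) are of strictly lower order than $d^{-(\gamma+1)\beta}$; this is where the constraints $\gamma_1<\inf(1,\alpha)$, $\gamma_1<\gamma$, and the form \eqref{f} of the decay of $f$ are consumed, and it is essentially the anisotropic analogue of the barrier analysis in \cite{CDLP}, \cite{BDL2}, carried out coordinate by coordinate because $\Theta_\alpha$ is only diagonal.
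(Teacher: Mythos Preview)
Your barrier construction and the identification of the key cancellation via the definition of $C(x)$ are exactly what the paper does: same test functions $\Phi^\pm=(C(x)\pm\nu d^{\gamma_1})d^{-\gamma}\pm D$, same role for $(P_1),(P_2)$ and for the constraints $\gamma_1<\inf(1,\alpha)$, $\gamma_1<\gamma$. The paper carries out the computation in detail and isolates the decisive subprincipal term $(C(x)\gamma)^{\beta-1}\nu\frac{(\gamma-\gamma_1)\gamma_1}{\gamma+1}\,d^{-(\gamma+1)(\alpha+1)-1+\gamma_1}$, whose strict positivity (this is where $0<\gamma_1<\gamma$ enters) beats $f$ under \eqref{f}; your description captures this at the strategic level.

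Two points need tightening. First, to compare an \emph{arbitrary} solution $u$ with the barriers you must desingularise one side: the paper uses the shifted subsolution $(C(x)-\nu d^{\gamma_1})(d+\delta)^{-\gamma}-D$, which is bounded on $\overline\Omega$ and hence can be compared to $u$ via Theorem \ref{thcompar}, then lets $\delta\to0$; for the upper bound one argues symmetrically (or, as the paper does, goes through the approximants $u_R$). Your ``standard truncation, let the level go to $+\infty$'' is not the right mechanism here --- it is the shift in $d$ that does the work.

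Second, your uniqueness sketch has a genuine gap. The estimate \eqref{bouest} only gives $u_1-u_2\le 2\nu\,d^{\gamma_1-\gamma}+2D_\nu$, which still blows up since $\gamma_1<\gamma$; subtracting a constant $\varepsilon$ from $u_1$ does not make $u_1-\varepsilon-u_2$ bounded near $\partial\Omega$, and $u_1-\varepsilon$ is not a subsolution of the same equation because the zero--order term $\lambda|s|^\alpha s$ is nonlinear. The argument that works (and is what \cite{BDL2}, to which the paper defers, actually does) is multiplicative: for $\theta\in(0,1)$ the precise asymptotics give $\theta u_1-u_2\le (\theta-1)C(x)d^{-\gamma}+o(d^{-\gamma})\to-\infty$ as $d\to0$, so the supremum of $\theta u_1-u_2$ is attained inside $\Omega$; then one exploits the homogeneities of $F(\nabla\cdot,D^2\cdot)$, $|\nabla\cdot|^\beta$ and $\lambda|\cdot|^\alpha\cdot$ together with the strict monotonicity of the zero--order term to derive a contradiction, and finally lets $\theta\uparrow1$.
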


   \begin{proof} of Theorem \ref{exilambda} : 
   
    In all the  proofs which follow, we will just detail the case $\gamma>0$ and leave the case $\gamma=0$ to the reader. 
Let $\delta$ be small enough in order that in $d < 2 \delta$ the distance is $ \mathcal {C}^2$. We define 

$$\varphi (x)= C_0 \, d(x)^{-\gamma} \, ,$$             then we have, by an easy computation, using the fact that $d$ is $ \mathcal {C}^2$ near the boundary and the properties of $F$ : 
$$
| F( \nabla \varphi , D^2 \varphi) - \gamma^{1+\alpha} ( \gamma+1) d^{-(\gamma+1) \alpha -\gamma-2} C_o^{1+\alpha} F( \nabla d, \nabla d \otimes \nabla d) | \leq c d^{-(\gamma+1) \alpha -\gamma-1}     , 
$$
and 
  $$ | \nabla \varphi |^\beta = C_o^\beta d^{-(\gamma+1)\beta} | \nabla d |^\beta $$
   so taking $C_o$ conveniently large  and using \eqref{f1}, 
   $$ -F( \nabla \varphi, D^2 \varphi)+ | \nabla \varphi |^\beta \geq f^+.$$
    In particular for any positive constant $D$ , $\varphi_1 : = \varphi+ D$ is also a supersolution of 
   $$  -F( \nabla \varphi_1, D^2 \varphi_1)+ | \nabla \varphi_1 |^\beta+ \lambda |  \varphi_1|^\alpha  \varphi_1 \geq f^+. $$
   We extend $\varphi_1$ inside $\Omega$ by taking 
   
    $$\overline{w} = \left\{ \begin{array}{cc}
    \varphi_1& {\rm in} \ d< \delta\\
     \frac{C_0}{\delta^\gamma} e^{\frac{1}{d(x)-2\delta}+\frac{1}{\delta}}+D\, ,& {\rm if} \ \delta < d \leq 2 \delta \\
     D & {\rm if} \ d > 2 \delta. 
     \end{array}\right.$$
      where, 
      since $\varphi_2$ is $\mathcal{C}^2$,  $K_1$   being  defined by 
      $$|F( \nabla \varphi_1, D^2 \varphi_1)|+ | \nabla \varphi_1 |^\beta \leq K_1, $$
  we have chosen  
      $D \geq \left( { |f|_\infty +K_1 \over \lambda } \right)^{1\over 1+\alpha}$. 
       Then $\overline{w}$ is a convenient  super-solution ( note that on $\delta < d \leq 2 \delta$ we use the fact that the infimum of two super-solutions is still a  super-solution). 
      
      Let us exhibit a convenient sub-solution : 
       For $s>0$, $c_0=\gamma^{-1}\left( \frac{(\gamma+1) a}{2}\right)^{\frac{1}{\beta-\alpha-1}}$ and $x\in \Omega\setminus \Omega_{2\delta}$, let us consider the function
 $\varphi ^s (x)= c_0 (d(x)+ s)^{-\gamma}\, .$
One has 
$$
-  F (\nabla \varphi^s,  D^2 \varphi^s)+ |\nabla \varphi^s|^\beta +\lambda (\varphi^s)^{1+\alpha} 
 \leq f(x)\, ,
$$
for $\delta$ and $s$ sufficiently small, since $f$ is bounded from below.

Moreover, for $D\geq c_0 \delta^{-\gamma}+\left(\frac{|f^-|_\infty}{\lambda}\right)^{\frac{1}{\alpha+1}}$,  the constant function 
$c_0 (\delta+s)^{-\gamma}-D$ is also a sub solution in $\Omega$.

Therefore, the function
             $$ \underline{w}^s(x) = \left\{ \begin{array} {ll}
     \varphi^s(x)-D & {\rm in } \   \Omega\setminus \Omega_\delta\\[2ex]
     c_0 ( \delta+s)^{-\gamma} -D  & { \rm in } \ \Omega_\delta
     \end{array} \right.
     $$
   is a convenient  sub-solution.

  Using Remark \ref{nonhom} after the existence Theorem  \ref{exidir},     let $u_R$ which satisfies 
  \begin{equation}\label{uR}\left\{ \begin{array}{lc}
  - F(  D u_R, D^2 u_R) + | \nabla u_R |^\beta + \lambda |u_R|^\alpha u_R = f _R& {\rm in} \ \Omega\\
   u_R=R & {\rm on} \ \partial \Omega, 
   \end{array}
   \right.
   \end{equation} 
   where $f_R = \inf ( f, R)$. 
    Let us  observe,   using the comparison principle    in Theorem \ref{thcompar},   that 
     $\underline{w}^s \leq u_R \leq \overline{w}, $   that $(u_R)_R$ is non decreasing,   and since it is trapped between  $\underline{w}^s $ and $\overline{w}^s $, $(u_R)_R$  is locally uniformly bounded, hence locally uniformly Lipschitz.  By classical results for uniformly  Lipschitz  viscosity solutions,  it converges  to a solution of ( \ref{eqlambdainfini}) inside $\Omega$. The boundary behaviour follows by letting $s$ go to zero. 
      \medskip
      
       Note that here we did not use the uniform Lipschitz estimates, so  neither $\alpha \geq 2$, nor  "$f$ Lipschitz continuous"   is  needed. 
       However, we do not have the precise estimate at the boundary, and then we cannot ensure  the uniqueness.

\end{proof}   

 \begin{proof} of Theorem \ref{precest}
 
            We     introduce for $\delta >0$ small 
                       $$ \varphi_1 = \left(  \left({ F( \nabla d, \nabla d \otimes \nabla d) ( \gamma+1)\over \gamma^{\beta-\alpha-1}} \right)^{1\over \beta-\alpha-1}+\nu d^{\gamma_1} \right) d^{-\gamma} 
                       + D$$
                         and 
                        $$ \underline{ w}_{\epsilon, \delta}= \left( \left({ F( \nabla d, \nabla d \otimes \nabla d)  ( \gamma+1)\over \gamma^{\beta-\alpha-1} }\right)^{1\over \beta-\alpha-1} -\nu d^{\gamma_1}  \right) (d+ \delta) ^{-\gamma} - D$$
                        where $D  $  is  some constant to be chosen later . 
                        Recall that $C(x)=\left({ F( \nabla d, \nabla d \otimes \nabla d) ( \gamma+1)\over \gamma^{\beta-\alpha -1}} \right)^{1\over \beta-\alpha-1}$.  
                        
We  prove that $ \varphi_1 $ is a supersolution in $\Omega_{2\delta} = \{ x \in \Omega, d(x, \partial \Omega \leq 2\delta\}$ for $\delta $ small enough. We denote   $ \varphi (x) =  C(x) d^{ -\gamma}+ \nu d^{-\gamma+\gamma_1}$.   Then $D \varphi = -\gamma  C(x) d^{-\gamma-1} \nabla d (1+\nu { \gamma-\gamma_1 \over \gamma C(x)} d^{\gamma_1}) + DC(x)d^{-\gamma}$, 
$$  \begin{array}{rl}
  & D^2 \varphi (x)=   \gamma\,  ( \gamma+1) C(x) d^{-\gamma-2} \left( 1+ \nu \frac{( \gamma-\gamma_1) ( \gamma-\gamma_1+1)}{ \gamma \,( \gamma+1) C(x)} d^{ \gamma_1}\right ) \nabla d \otimes \nabla d\\
  & -\gamma\,  C(x)  d^{-\gamma-1} \left( 1+ \nu \frac{(\gamma-\gamma_1)}{ \gamma\, C(x)} d^{ \gamma_1} \right) D^2 d 
  -\gamma \, d^{-\gamma-1} \left( \nabla d \otimes \nabla C+ \nabla C \otimes \nabla d \right) + d^{-\gamma}D^2C\, .
   \end{array}
   $$
 In particular 
 
 $$
 |D^2\varphi- C(x) \gamma ( \gamma+1)  \ d^{-\gamma-2} (\nabla d \otimes \nabla d)(1+\nu {( \gamma-\gamma_1 )(\gamma+1-\gamma_1 )\over \gamma( \gamma+1)C(x)}d^{\gamma_1}) | \leq c d^{-\gamma-1}
$$
 which implies 
  \begin{eqnarray}\label{majFD2}
  \left\vert F( D\varphi, D^2 \varphi)\right.&-&\left. C(x) \gamma ( \gamma+1)  \ d^{-\gamma-2} (1+\nu {( \gamma-\gamma_1 )(\gamma+1-\gamma_1 )\over \gamma( \gamma+1)C(x)}d^{\gamma_1}) F( D \varphi, \nabla d \otimes \nabla d)\right\vert\nonumber
  \\ &\leq &c d^{-(\gamma+1)(1+\alpha) }= o( d^{-(\gamma+1) \alpha -\gamma-2 + \gamma_1}).
  \end{eqnarray}
  Let $\tau \in ]{ \gamma_1 \over \alpha } ,1[$,  by Property $(P_2)$ ,  one has  ( in the computations below $c$ denotes always some universal constant which varies from one line to another) :
  \begin{eqnarray}\label{majphi}
 \left\vert  F( D \varphi,  \nabla d \otimes \nabla d) \right.&-&
   \left. F( -\gamma  C(x) d^{-\gamma-1} \nabla d (1+\nu { \gamma-\gamma_1 \over \gamma C(x)} d^{\gamma_1}), \nabla d \otimes \nabla d) \right\vert \nonumber\\
    &\leq & c | \nabla d |^2 d^{-(\gamma +1)\alpha} \sum_{i=1}^{i= N}  \left\vert   \left\vert \gamma  C(x)  \partial_i d (1+\nu { \gamma-\gamma_1 \over \gamma C(x)} d^{\gamma_1}) \right\vert^\alpha \right.\nonumber\\
    &-&\left.  \left\vert  \gamma  C(x)  \partial_i d (1+\nu { \gamma-\gamma_1 \over \gamma C(x)} d^{\gamma_1})+ \partial_i C d\right\vert ^\alpha  \right\vert\nonumber \\
    \end{eqnarray}
    \begin{eqnarray}    & \leq&   c  d^{-(\gamma +1)\alpha}   d^{ \tau \alpha} 
    +  c\sum_{i, |\partial_i d| > d^\tau} | \partial_i d |^\alpha\left( \left(1+ { \partial_i C \over \gamma C \partial_i d (1+\nu { \gamma-\gamma_1 \over \gamma C(x)} d^{\gamma_1})} d\right)^\alpha - 1\right)\nonumber\\
       &\leq & c  d^{-(\gamma +1)\alpha}  \left( d^{ \tau \alpha} + \sum_{i,  |\partial_i d| > d^\tau} | \partial_i d |^\alpha  {cd \over | \partial_i d |  } \right) \nonumber\\&\leq & 
    c  d^{-(\gamma +1)\alpha} (d^{ \tau\alpha} + d^{1- \tau ( 1-\alpha)^+})
    \leq   c  d^{-(\gamma +1)\alpha} d^{  \tau \alpha} = o( d^{-( \gamma+1) \alpha + \gamma_1}).
    \end{eqnarray}
 Now observe that by using a Taylor expansion at order $2$ and the properties of $F$ 
 \begin{eqnarray}\label{TE}
&& F( -\gamma  C(x) d^{-\gamma-1} \nabla d (1+\nu { \gamma-\gamma_1 \over \gamma C(x)} d^{\gamma_1}), \nabla d \otimes \nabla d)\nonumber\\
& = & 
   (\gamma  C(x) d^{-\gamma-1})^\alpha (1+\nu \alpha { \gamma-\gamma_1 \over \gamma C(x)} d^{\gamma_1}) F( \nabla d, \nabla d \otimes \nabla d)
   + O( d^{-( \gamma+1)\alpha + 2 \gamma_1}).
   \end{eqnarray}
Gathering \eqref{majFD2}  \eqref{majphi}  and \eqref{TE} one obtains 
\begin{eqnarray*}\label{Fphi}
&&\left\vert F( \nabla \varphi, D^2 \varphi)\right.\\
&-&\left.   (C(x)\gamma)^{1+\alpha}  ( \gamma+1)   \ d^{-\gamma-2} (1+\nu \alpha { \gamma-\gamma_1 \over \gamma C(x)} d^{\gamma_1}) (1+\nu {( \gamma-\gamma_1 )(\gamma+1-\gamma_1 )\over \gamma( \gamma+1)C(x)}d^{\gamma_1}) F( \nabla d, \nabla d \otimes \nabla d) \right\vert \nonumber\\
&=& o( d^{-(\gamma+1)(1+\alpha) -1+ \gamma_1}).
\end{eqnarray*}
    On the other hand  one has, using a Taylor expansion at the order $2$ and the mean value's Theorem 
  \begin{equation}\label{beta}
  \left\vert   | \nabla \varphi |^\beta -  \left( d^{-\gamma-1} C(x) \gamma\right)^\beta  (1+\nu { \beta (\gamma-\gamma_1) \over \gamma C(x)} d^{\gamma_1}) | \nabla d |^\beta \right\vert
  \leq  c (d^{-( \gamma+1) \beta+2\gamma_1}+ d^{-(\gamma+1) \beta +1} ).
  \end{equation}
Considering the term in  $d^{ \gamma_1}d^{-\gamma-2-( \gamma+1)\alpha}  $  in $(C(x)\gamma)^{1+\alpha}  ( \gamma+1)^\alpha   \ d^{-\gamma-2} ((1+\nu\alpha  { \gamma-\gamma_1 \over \gamma C(x)} d^{\gamma_1}) ) (1+\nu {( \gamma-\gamma_1 )(\gamma+1-\gamma_1 )\over \gamma( \gamma+1)C(x)}d^{\gamma_1})$   and in the left hand side of  (\ref{beta}), and  using  the definition of $C(x)$, one has 
 \begin{eqnarray*}
  -(C(x) \gamma)^{1+\alpha}( \gamma+1) \left( \nu\alpha { \gamma-\gamma_1 \over \gamma C(x)} \right. &+&\left.  \nu {( \gamma-\gamma_1) ( \gamma+1-\gamma_1 \over \gamma( \gamma+1) C(x)} \right) F( \nabla d \otimes \nabla d )  \\
  &+ &  (C(x)\gamma)^\beta  \beta \nu { \gamma-\gamma_1 \over \gamma C(x)} \\
  &> &    (C(x)\gamma)^{\beta-1} \nu { (\gamma-\gamma_1)  \gamma_1\over ( \gamma+1) } , 
  \end{eqnarray*}
               and then               
                          \begin{eqnarray*}
                          - F(\nabla \varphi,  D^2 \varphi) + | \nabla \varphi |^\beta &\geq&  d^{ -( \gamma+1)\alpha -\gamma-2 + \gamma_1} (C(x)\gamma)^{\beta-1}\nu { (\gamma-\gamma_1)  \gamma_1\over  ( \gamma+1) } + o( d^{ -( \gamma+1)\alpha -\gamma-2 + \gamma_1} ) .
                                                   \end{eqnarray*}
                                                    Taking $\delta$ small enough one gets  by the assumption on $f$  and $\gamma_1$,  
                         $$- F(\nabla \varphi,  D^2 \varphi) + | \nabla \varphi |^\beta > f^+\geq f. $$
                                                  We then consider in $\delta < d \leq 2 \delta $, as in the proof of Theorem  \ref{exilambda}
              $$\varphi_2  = {C(x)(1+ \nu \delta^{ \gamma_1} )\over \delta ^\gamma} e^{{1 \over d-2\delta}+{1\over \delta}} + D$$
             where,  if $K_1$  is so that $| F( \nabla \varphi_2, , D^2 \varphi_2) | + | \nabla \varphi_2 |^\beta \leq K_1$,  we have denoted $D$ some constant so that 
               $D  \geq \left( { |f|_\infty + K_1 \over \lambda }\right)^{1\over 1+ \alpha}$.
              We have obtained that for $\delta$ small and $d < \delta $, the function 
             $$  \overline{ w}, = \left\{ \begin{array}{lc}
              \varphi + D  & {\rm in} \ d < \delta\\
              \varphi_2 & {\rm in} \ \delta < d < 2\delta\\
              D & {\rm in} \ d > 2 \delta 
              \end{array} \right.$$is a convenient super-solution. 
               
        In the same manner  let 
 $$ \underline{ w}_{ \delta}= \left( \left({  F(  \nabla d , \nabla d \otimes \nabla d) ( \gamma+1)\over \gamma^{\beta-\alpha-1} }\right)^{1\over \beta-\alpha-1} -\nu d^{ \gamma_1} \right) (d+ \delta) ^{-\gamma} - D. $$
 Then    \begin{eqnarray*}
                                        - F( \nabla  \underline{ w}_{ \delta},  D^2 \underline{ w}_{ \delta})&+& |\nabla  \underline{ w}_{ \delta}|^\beta  + \lambda   |\underline{ w}_{ \delta}|^\alpha  \underline{ w}_{ \delta}-f\\
                                        &\leq & - (d+\delta)^{ -( \gamma+1)\alpha -\gamma-2 + \gamma_1} (C(x)\gamma)^{\beta-1}\nu { (\gamma-\gamma_1)  \gamma_1\over  ( \gamma+1) }   \left(1+ o(1)\right)\\
                                        &&+ c \lambda ( d+ \delta)^{-\gamma(1+\alpha)}  -f \\
                                        &\leq & 0,  
                                        \end{eqnarray*}
                                         when $d$ is small, and 
                                        arguing  as previously one can choose $D$ in order that $\underline{ w}_{ \delta}$ be a sub-solution also in $d> \delta$. 
                                        
                                          Now arguing as in the proof of the previous Theorem,  more precisely taking $u_R$  a solution of  \eqref{uR} one gets the existence of $u$ which blows up on the boundary and now is so that 
                                          $ u(x) \sim C(x) d^{-\gamma}$ near the boundary. The uniqueness can be shown as in \cite{BDL2}.

                                        \end{proof}
                                        
We can now prove the Theorem 

\begin{theorem}\label{exiergo}
Under the assumptions of Theorem \ref{exilambda},  and assuming  in addition that $\alpha \geq 2$ and $f$ is Lipschitz continuous, there exists an ergodic  pair  $(u, c)$ , furthermore $u(x) \sim C(x) d^{-\gamma}$ near the boundary.
\end{theorem}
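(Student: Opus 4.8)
The strategy is to build the ergodic pair $(u,c)$ by passing to the limit, along a subsequence $\lambda_n\to 0$, in a renormalisation of the solutions $u_\lambda$ of the penalised infinite boundary value problem \eqref{eqlambdainfini}, whose existence and a priori bounds are provided by Theorems \ref{exilambda} and \ref{precest} (the sharp boundary behaviour invoking in addition \eqref{C(x)}). The three ingredients are: the $\lambda$-independent interior Lipschitz estimate of Proposition \ref{lipunif}; the boundary bounds \eqref{bou}--\eqref{bouest}; and a compactness/homogeneity argument showing that the penalisation term converges to a \emph{constant}. Concretely, fix $x_0\in\Omega$ and, for $\lambda\in(0,1]$, let $u_\lambda$ solve \eqref{eqlambdainfini}; set $v_\lambda:=u_\lambda-u_\lambda(x_0)$, so that $v_\lambda(x_0)=0$ and $-F(\nabla v_\lambda,D^2 v_\lambda)+|\nabla v_\lambda|^\beta=g_\lambda:=f-\lambda|u_\lambda|^\alpha u_\lambda$ in $\Omega$. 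By \eqref{bou}, $|u_\lambda|\le C_0\delta^{-\gamma}+D_1\lambda^{-1/(\alpha+1)}$ on $\Omega_\delta$, hence $\|g_\lambda\|_{L^\infty(\Omega_\delta)}\le |f|_\infty+2^\alpha\big(C_0^{\alpha+1}\delta^{-\gamma(\alpha+1)}+D_1^{\alpha+1}\big)$, a bound independent of $\lambda\in(0,1]$. Restricting $u_\lambda$ to balls $B\subset\subset\Omega$, where it solves \eqref{eq45}, Proposition \ref{lipunif} (using $\alpha\ge 2$, $\beta>\alpha+1$ and $f$ Lipschitz), applied with $\sup_B(u_\lambda-u_\lambda)^+=0$, gives an interior Lipschitz bound for $u_\lambda$ — equivalently for $v_\lambda$ — depending only on the data and on $\|g_\lambda\|_\infty$, hence uniform in $\lambda$; as recalled at the end of Section \ref{secLipuni} this yields $|\nabla v_\lambda(x)|\le c\,d(x)^{-\gamma-1}$ uniformly in $\lambda$, and integrating from $x_0$, $|v_\lambda(x)|\le c'\big(1+d(x)^{-\gamma}\big)$ uniformly. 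Thus $(v_\lambda)$ is locally uniformly bounded and Lipschitz, so along some $\lambda_n\to0$, $v_{\lambda_n}\to v$ locally uniformly, with $v$ locally Lipschitz and $|v(x)|\le c'(1+d(x)^{-\gamma})$.

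To identify the limit, observe that \eqref{bou} gives $|\lambda^{1/(\alpha+1)}u_\lambda(x_0)|\le D_1+C_0 d(x_0)^{-\gamma}$, so up to a further subsequence $\lambda_n^{1/(\alpha+1)}u_{\lambda_n}(x_0)\to\ell\in\R$; since $(v_{\lambda_n})$ is locally bounded, $\lambda_n^{1/(\alpha+1)}v_{\lambda_n}\to0$ and therefore $\lambda_n^{1/(\alpha+1)}u_{\lambda_n}\to\ell$ locally uniformly in $\Omega$. As $t\mapsto|t|^\alpha t$ is a homeomorphism of $\R$, it follows that $\lambda_n|u_{\lambda_n}|^\alpha u_{\lambda_n}=|\lambda_n^{1/(\alpha+1)}u_{\lambda_n}|^\alpha\big(\lambda_n^{1/(\alpha+1)}u_{\lambda_n}\big)\to|\ell|^\alpha\ell=:-c$ locally uniformly, $c$ a constant. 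By stability of viscosity solutions under locally uniform convergence of the solutions and of the right-hand sides, $v$ is a viscosity solution of $-F(\nabla v,D^2 v)+|\nabla v|^\beta=f+c$ in $\Omega$.

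There remains to prove $v(x)\to+\infty$ as $d(x)\to0$, with $v(x)\sim C(x)d(x)^{-\gamma}$. For the blow-up, pick $\epsilon>0$ small and set $\psi_\epsilon=\epsilon|\log d|-C_\epsilon$: since $\beta\le\alpha+2$ and $F(\nabla d,\nabla d\otimes\nabla d)>0$ near $\partial\Omega$, a direct computation gives $-F(\nabla\psi_\epsilon,D^2\psi_\epsilon)+|\nabla\psi_\epsilon|^\beta\to-\infty$ as $d\to0$; hence, on a region $\{d_n^*<d<\delta_1\}$ with $d_n^*\to0$ suitably chosen (possible since $g_{\lambda_n}\ge-|f|_\infty-c\lambda_n d^{-\gamma(\alpha+1)}-cD_1^{\alpha+1}$ by \eqref{bou}, and $\gamma(\alpha+1)>\alpha+2$ for $\alpha\ge2$), $\psi_\epsilon$ is a strict sub-solution of the equation super-solved by $v_{\lambda_n}$. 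On $\{d=\delta_1\}$, $v_{\lambda_n}$ is bounded below uniformly, so $\psi_\epsilon\le v_{\lambda_n}$ there once $C_\epsilon$ is large; on $\{d=d_n^*\}$, \eqref{bou} gives $v_{\lambda_n}\ge c_0(d_n^*)^{-\gamma}-D_1\lambda_n^{-1/(\alpha+1)}-u_{\lambda_n}(x_0)$ which, $(d_n^*)^{-\gamma}$ dominating $\lambda_n^{-1/(\alpha+1)}$, exceeds $\psi_\epsilon$ for $n$ large. By the comparison principle (as in Theorems \ref{theolip} and \ref{thcompar}), $v_{\lambda_n}\ge\psi_\epsilon$ on $\{d_n^*<d<\delta_1\}$, and letting $n\to\infty$ gives $v(x)\ge\epsilon|\log d(x)|-C_\epsilon$ for $d(x)<\delta_1$, so $v\to+\infty$ on $\partial\Omega$. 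Knowing this, and that $v$ is bounded on $\{d=\delta_1\}$, the sharp rate follows exactly as in the proof of Theorem \ref{precest} and as in \cite{BDL2}: the barriers $\overline w$ and $\underline w_\delta$ built there are, for $\delta$ small, super- and sub-solutions of $-F(\nabla\cdot,D^2\cdot)+|\nabla\cdot|^\beta=f+c$ in a neighbourhood of $\partial\Omega$ (their leading terms make the first member tend to $\pm\infty$, beating the bounded right-hand side), they are ordered with $v$ on $\{d=\delta_1\}$ upon choosing the additive constants large, and the comparison principle for infinite boundary data yields $\underline w_\delta\le v\le\overline w$ near $\partial\Omega$; letting $\delta\to0$ and $\nu\to0$ gives $v(x)d(x)^\gamma\to C(\bar x)$ as $x\to\bar x\in\partial\Omega$. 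Finally $\min_\Omega v$ is attained since $v\to+\infty$ on $\partial\Omega$; the equation being invariant under addition of constants to the unknown, $(u,c):=(v-\min_\Omega v,\,c)$ is the desired ergodic pair, with $u\sim C(x)d^{-\gamma}$ near $\partial\Omega$.

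The genuine difficulty is that the a priori bounds \eqref{bou}--\eqref{bouest} on $u_\lambda$ carry an additive error $O(\lambda^{-1/(\alpha+1)})$ which blows up as $\lambda\to0$, so the boundary behaviour of the limit cannot be read off directly from them; this is precisely why the $\lambda$-uniform Lipschitz and gradient estimates of Proposition \ref{lipunif} — the deepest ingredient, and the origin of the restriction $\alpha\ge2$ — are indispensable to control the oscillation of $v_\lambda$ on compact subsets, and why the blow-up of $v$ is proved by comparing $v_{\lambda_n}$ (which genuinely diverges at the boundary) with a slowly diverging sub-solution on a region shrinking to $\partial\Omega$, rather than by working with $v$ itself.
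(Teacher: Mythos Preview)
Your overall strategy matches the paper's exactly: take $u_\lambda$ from Theorem \ref{exilambda}, renormalise by $v_\lambda=u_\lambda-u_\lambda(x_0)$, invoke Proposition \ref{lipunif} for $\lambda$-uniform local Lipschitz bounds (this is where $\alpha\ge 2$ and the Lipschitz hypothesis on $f$ enter), extract a converging subsequence, show the penalisation term freezes to a constant $-c$, pass to the limit in the equation by stability, and finally prove boundary blow-up by comparison with an explicit barrier. Your argument that $\lambda_n|u_{\lambda_n}|^\alpha u_{\lambda_n}\to -c$ via the factorisation $\lambda_n^{1/(\alpha+1)}u_{\lambda_n}\to\ell$ is a legitimate variant of the paper's mean-value-theorem computation
\[
\lambda\big||U_\lambda(x)|^\alpha U_\lambda(x)-|U_\lambda(x_0)|^\alpha U_\lambda(x_0)\big|\le \lambda\,\frac{K}{\lambda^{\alpha/(\alpha+1)}}\,|U_\lambda(x)-U_\lambda(x_0)|\to 0,
\]
and the sharp-rate step (barriers of Theorem \ref{precest}) is handled the same way in both.

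The one place where your argument is weaker than the paper's is the boundary blow-up step. First, the claim ``$\gamma(\alpha+1)>\alpha+2$ for $\alpha\ge 2$'' is simply false: with $\gamma=\frac{\alpha+2-\beta}{\beta-\alpha-1}$ one has $\gamma(\alpha+1)>\alpha+2$ iff $\beta<\frac{2(\alpha+1)(\alpha+2)}{2\alpha+3}$, which excludes e.g.\ $\beta=\alpha+2$ (then $\gamma=0$). Your shrinking-domain argument with the logarithmic barrier $\psi_\epsilon=\epsilon|\log d|-C_\epsilon$ can still be salvaged by a case split (when $\gamma(\alpha+1)\le\alpha+2$ the sub-solution inequality actually holds on all of $\{0<d<\delta_1\}$ once $\lambda_n$ is small, and the inner boundary comparison only needs $d_n^*\ll\lambda_n^{1/(\gamma(\alpha+1))}$; when $\gamma(\alpha+1)>\alpha+2$ one must take $d_n^*$ between $\lambda_n^{1/(\gamma(\alpha+1)-\alpha-2)}$ and $\lambda_n^{1/(\gamma(\alpha+1))}$, which is a non-empty interval), but as written the justification is incorrect and the balancing of $d_n^*$ is left implicit.

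The paper avoids all of this. It uses directly the barrier
\[
\phi(x)=\frac{\sigma}{(d(x)+s)^\gamma}-\frac{\sigma}{(\delta_0+s)^\gamma}\qquad\Big(\text{resp. }-\sigma\log(d+s)+\sigma\log(\delta_0+s)\ \text{if }\gamma=0\Big),
\]
already constructed in the proof of Theorem \ref{exilambda} as a sub-solution of the \emph{full} $\lambda$-equation, and compares in $\Omega\setminus\Omega_{\delta_0}$ via Theorem \ref{thcompar} (with the increasing zero-order term $\lambda|\cdot|^\alpha\cdot$) to obtain $V_\lambda\ge\phi+\min_{d=\delta_0}V_\lambda$. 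Letting $\lambda,s\to 0$ gives $V\ge \sigma d^{-\gamma}+\text{const}$ near $\partial\Omega$, hence $V\to+\infty$. This is both shorter and avoids any shrinking-domain bookkeeping; you should replace your $\psi_\epsilon$ paragraph with this direct barrier comparison.
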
 
 \begin{proof} of Theorem \ref{exiergo} :
 By Theorem \ref{exilambda}, for $\lambda>0$ there exists a solution $U_\lambda$ of problem \eqref{eqlambdainfini}, which satisfies estimates \eqref{bou}.  It then follows that $\lambda |U_\lambda|^\alpha U_\lambda$ is locally  bounded in $\Omega$, uniformly with respect to $0<\lambda<1$. Let us fix an arbitrary point $x_0\in \Omega$. Then,   there exists $c\in \mathbb R$ such that, up to a sequence $\lambda_n\to 0$, 
$$\lambda |U_\lambda (x_0)|^\alpha U_\lambda(x_0)\to -c\, .$$ 
On the other hand, Proposition \ref{lipunif} yields that $(U_\lambda)$ is locally uniformly Lipschitz continuous. Therefore, for $x$   in a compact subset of  $\Omega$, one has
 using again  \eqref{bou} and the mean value's Theorem,
$$
\lambda \left| |U_\lambda(x)|^\alpha U_\lambda(x)- |U_\lambda(x_0)|^\alpha U_\lambda(x_0)\right|\leq \lambda \frac{K}{\lambda^\frac{\alpha}{\alpha+1}}|U_\lambda (x)- U_\lambda(x_0)| \to 0 .
$$
It then follows that $c$ does not depend on the choice of $x_0$ and, up to a sequence and locally uniformly in $\Omega$, one has
$$
\lambda |U_\lambda|^\alpha U_\lambda\to -c\, .
$$
Moreover, the function $V_\lambda(x)= U_\lambda (x)-U_\lambda(x_0)$ is locally uniformly bounded,  locally uniformly Lipschitz continuous  and satisfies
$$
- F(\nabla V_\lambda, D^2 V_\lambda)+|\nabla V_\lambda|^\beta=f-\lambda |U_\lambda|^\alpha U_\lambda\ \hbox{ in } \Omega\, .
$$
If $V$ denotes the local uniform limit of $V_\lambda$ for a sequence $\lambda_n\to 0$, then one has
$$
- F(\nabla V, D^2 V)+|\nabla V|^\beta=f+c\ \hbox{ in } \Omega\, .
$$
Let us define for arbitrary $s>0$ : 
$$
\begin{array}{l}
\displaystyle \phi(x)=\frac{\sigma}{(d(x)+s)^\gamma}- \frac{\sigma}{(\delta_o+s)^\gamma} \quad \hbox{ if } \gamma>0\, ,\\[2ex]
\phi(x)=-\sigma\, \log (d(x)+s) + \sigma\, \log (\delta_o+s) \quad \hbox{ if } \gamma=0\, ,
\end{array}
$$and   $\sigma=  \left((\gamma+1)\frac{a}{2} \right)^{\frac{1}{\beta-\alpha-1}}\gamma^{-1}$ if $\gamma>0$,  
$\sigma= \frac{a}{2}$ if $\gamma=0$. Using the computations in Theorem  \ref{exilambda} and Theorem \ref{thcompar}, we have that, for some $\delta_0>0$ sufficiently small,
$$
V_\lambda \geq \phi +\min_{d(x)=\delta_0}V_\lambda\quad \hbox{ in }\Omega\setminus \Omega_{\delta_0}\, ,
$$
 Letting $\lambda, s\to 0$   we deduce that $V(x)\to +\infty$ as $d(x)\to 0$. This shows that $(c,V)$ is an ergodic pair and concludes the proof.
The asymptotic behaviour  can be proved as in \cite{BDL2}. 
  \end{proof}

\section{Proof of Theorem \ref{sympaetpas}.}

\begin{proof}[Proof of Theorem \ref{sympaetpas}] 
Let $u_\lambda$ be a solution of \eqref{eq45}. We begin by giving a bound that will be useful in the whole proof.
Observe that $u_\lambda^+$ is a sub solution of
$$- F( \nabla u_\lambda^+ ,  D^2 u_\lambda^+) \leq |f|_\infty.$$
 By the existence's Theorem in \cite{BD2} let $V$ be a  solution of 
 $$\left\{ \begin{array}{lc} 
 -F( \nabla V, D^2 V) = 2& {\rm in} \ \Omega \\
  V = 0 & {\rm on} \ \partial \Omega\end{array}\right.$$
  Then $V$ is bounded, 
  $|f|^{1\over 1+\alpha } V$ is then a super-solution and  the comparison Theorem  in \cite{BD2} implies that \begin{equation}\label{upper}|u_\lambda ^+|_\infty \leq |V|_\infty  |f|_\infty ^{\frac{1}{1+ \alpha}}\leq c |f|_\infty ^{\frac{1}{1+ \alpha}}.\end{equation}
Let us consider first the case  where  there exists a sub-solution  $\varphi$  for  \eqref{eq3}. 
Then,  $\varphi-| \varphi |_\infty$ is  a sub-solution of  equation \eqref{eq45}, and by the 
comparison principle   we deduce
 $u_\lambda \geq \varphi-| \varphi|_\infty$. Thus, 
in this case $(u_\lambda)$ is uniformly bounded in $\Omega$. The Lipschitz estimates in Theorem \ref{theolip}  then yield that $u_\lambda$ is uniformly converging up to a sequence to a Lipschitz solution of  problem (\ref{eq3}). Note that in this case we did not use $\alpha \geq 2$. 

\medskip
We now treat the second case, i.e. we suppose that  \eqref{eq3} has no  sub-solutions.
In particular $|u_\lambda|_\infty $ diverges, since otherwise by the first part of the proof,  we could extract from $(u_\lambda)$ a subsequence 
 converging to a solution of \eqref{eq3}.  

On the other hand,  since  $-\left(\frac{ |f|_\infty }{\lambda}\right) ^{\frac{1}{1+ \alpha}}$ is a sub solution of \eqref{eq45}, by the comparison principle we obtain 
$u_\lambda^- \leq \left(\frac{|f|_\infty}{\lambda}\right) ^{\frac{1}{1+ \alpha}}$, which,    
jointly with  \eqref{upper}, yields $\lambda |u_\lambda|_\infty^{1+ \alpha} \leq  c_1|f|_\infty.$ 
Hence,   there exists  $(x_\lambda)\subset \Omega$ such that $u_\lambda (x_\lambda)=-|u_\lambda|_\infty\rightarrow -\infty$ and there exists a constant $c_\Omega\geq 0$ such that, up to a subsequence, $\lambda |u_\lambda|_\infty^{1+ \alpha}\rightarrow c_\Omega$.

The rest of the proof follows the lines in \cite{BDL2}.

     \end{proof}

    \begin{proof} of Theorem \ref{ABCDE}
     We do not give the proof which follows the lines in \cite{BDL2}.
     \end{proof}

    \section{A comparison principle  for degenerate  non linear elliptic equations without zero order terms}     \label{compzero} :               
  \begin{theorem}\label{2.4}
Suppose that $b$ is continuous and bounded on $\Omega$  and that either $\alpha = 0$ or $\alpha \neq 0$ and  $f$ is a continuous function such that $f \leq -m<0$.   Suppose that $u$  and $v$ are respectively  a sub-solution  and a super-solution of 
    $$-F( \nabla u,  D^2 u) +b(x) | \nabla u |^\beta = f.$$
Suppose that $u$  or  $v$  is  Lipschitz  and  both the two are bounded  on  $\overline{ \Omega}$,  that $u \leq v$ on $\partial\Omega$. Then $u\leq v$ in $\Omega$. 
\end{theorem}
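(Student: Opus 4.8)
The plan is to run the classical doubling-of-variables argument; the only genuine difficulty is the absence of a zeroth-order term, which is repaired by a change of unknown, exactly as in \cite{LP} and \cite{BDL2}. Suppose, for contradiction, that $M:=\sup_{\overline{\Omega}}(u-v)>0$; since $u\leq v$ on $\partial\Omega$ this supremum is attained at an interior point. Doubling with $\psi_j(x,y)=u(x)-v(y)-\frac{j}{2}|x-y|^2$ and calling $(x_j,y_j)$ a maximizer, both points lie for $j$ large in a fixed $\Omega'\subset\subset\Omega$ and converge to a common interior maximum point of $u-v$; inserting the diagonal in $\psi_j\geq\psi_j(x_j,y_j)$ and using the \emph{global} Lipschitz bound of $u$ or $v$ (this is exactly where that hypothesis enters, and it is what lets us replace ``$b$ H\"older'' of Theorem \ref{theolip} by ``$b$ continuous'', cf. the proof of Theorem \ref{thcompar}) one gets that $j|x_j-y_j|$ stays bounded. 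By Ishii's lemma there are $X_j,Y_j\in S$ with $(p_j,X_j)\in\overline{J}^{2,+}u(x_j)$, $(p_j,-Y_j)\in\overline{J}^{2,-}v(y_j)$, $p_j=j(x_j-y_j)$, and $X_j+Y_j\leq 0$. The two jets carry the \emph{same} gradient slot $p_j$, so the first-order terms contribute only $(b(y_j)-b(x_j))|p_j|^\beta\to 0$, and subtracting the two viscosity inequalities and using \eqref{posh}, \eqref{fnl} together with $\Theta_\alpha(p_j)(-(X_j+Y_j))\Theta_\alpha(p_j)\geq 0$ gives
$$
0\leq \mathcal{M}^-\bigl(-\Theta_\alpha(p_j)(X_j+Y_j)\Theta_\alpha(p_j)\bigr)\leq f(x_j)-f(y_j)+o(1)\to 0 ,
$$
so there is no contradiction: no gain is available.

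The gain is manufactured by replacing $u$ by a \emph{strict} subsolution $\tilde u=g(u)$ of the same equation, for a well-chosen smooth increasing $g$ (in the model case $\beta=\alpha+2$ this is a Hopf--Cole type substitution). Writing $q_i:=|\partial_i u|^{\alpha/2}\partial_i u$, so that $\Theta_\alpha(\nabla u)(\nabla u\otimes\nabla u)\Theta_\alpha(\nabla u)=q\otimes q$ with $|q|^2=\sum_i|\partial_i u|^{\alpha+2}\sim|\nabla u|^{\alpha+2}$, and using \eqref{posh} together with $F(A+B)\geq F(A)+\mathcal{M}^-(B)$, a direct computation (the scalar factor $g'>0$ passes cleanly through $\Theta_\alpha$) shows that $\tilde u$ satisfies, in the viscosity sense,
$$
-F(\nabla\tilde u,D^2\tilde u)+b(x)|\nabla\tilde u|^\beta\leq (g')^{\alpha+1}f+\bigl((g')^\beta-(g')^{\alpha+1}\bigr)b\,|\nabla u|^\beta-a\,(g')^\alpha g''\,|q|^2 ,
$$
with $g',g''$ evaluated at $u$. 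Choosing $g$ strictly convex makes the last term strictly negative of size $|\nabla u|^{\alpha+2}$, and since $\beta\leq\alpha+2$ a Young inequality lets it absorb the middle term up to a bounded quantity; the homogeneity degree $\alpha+1>1$ then leaves over $(g')^{\alpha+1}f$, and here, when $\alpha\neq 0$, the hypothesis $f\leq-m<0$ is decisive, for $(g')^{\alpha+1}f\leq f-m\bigl((g')^{\alpha+1}-1\bigr)$ supplies the missing strictly negative contribution once the parameters in $g$ are suitably chosen. The case $\alpha=0$ is handled by the same scheme, the degree-$1$ homogeneity making it more flexible (see \cite{LP}, \cite{BDL2}). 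In all cases one obtains $\eta>0$ with $-F(\nabla\tilde u,D^2\tilde u)+b|\nabla\tilde u|^\beta\leq f-\eta$ in $\Omega$, while, after normalizing $g$ and adding a constant (which does not change the equation), one may arrange $\tilde u\leq u\leq v$ on $\partial\Omega$ and still $\sup_{\overline{\Omega}}(\tilde u-v)>0$.

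It then remains to repeat the doubling argument of the first paragraph with $\tilde u$ in place of $u$: it now terminates with $0\leq f(x_j)-\eta-f(y_j)+o(1)\to-\eta<0$, a contradiction; hence $\tilde u\leq v$ in $\Omega$, and letting $g\to\mathrm{id}$ (e.g. the convexity parameter tending to $0$) yields $u\leq v$. The delicate part of the scheme is thus the second step: the explicit construction of $g$ and the verification that the lower-order gradient term $\bigl((g')^\beta-(g')^{\alpha+1}\bigr)b\,|\nabla u|^\beta$ is dominated, via Young's inequality, by the good term $-a(g')^\alpha g''|q|^2$ — which is exactly where $\alpha+1<\beta\leq\alpha+2$ and the equivalence $|q|^2\sim|\nabla u|^{\alpha+2}$ are used — so that the residual is absorbed by the gain on $(g')^{\alpha+1}f$; this last point is where the strict negativity of $f$ is essential when $\alpha\neq 0$. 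The complete computations being those of \cite{LP} and \cite{BDL2}, we only sketch them.
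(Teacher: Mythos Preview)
Your scheme is correct and rests on the same convex change of unknown as the paper, but the two proofs are organized differently. The paper applies the change of variable \emph{symmetrically}: it writes $u=\varphi(z)$ and $v=\varphi(w)$ with a single fixed concave $\varphi(s)=-\gamma_1(\alpha+1)\log\bigl(\delta+e^{-s/(\alpha+1)}\bigr)$, derives the transformed equation, and shows that the effective Hamiltonian $H(x,s,p)=\dfrac{-a\varphi''}{\varphi'}\sum_i|p_i|^{\alpha+2}+b(x)(\varphi')^{\beta-\alpha-1}|p|^\beta-\dfrac{f(x)}{(\varphi')^{\alpha+1}}$ is strictly increasing in $s$ once $\gamma_1$ is chosen small (this is exactly where $f\le -m<0$ is used when $\alpha\neq 0$). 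The doubling is then performed on $z-w$; since $\varphi$ is a bijection, $z\le w$ is equivalent to $u\le v$, so the monotonicity of $H$ in $s$ yields the contradiction in one stroke, with no limiting family.

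Your asymmetric version---transform only $u$ to a strict subsolution $\tilde u=g(u)$ of the \emph{original} equation, compare with $v$, and let $g\to\mathrm{id}$---is valid, but carries an extra layer: you must take $g_\epsilon=\mathrm{id}+\epsilon h$ with $h$ fixed satisfying $h''/h'$ large enough on the range of $u$ (this is the constraint that makes Young's inequality close, i.e.\ that the bounded residual from $|\nabla u|^\beta\le \delta|q|^2+C_\delta$ is beaten by $(\alpha+1)h'm$), verify that the resulting gain $\eta_\epsilon\sim\epsilon$ survives uniformly in the gradient variable and for all small $\epsilon$, and then pass to the limit. The paper's symmetric formulation sidesteps this perturbative limit entirely; on the other hand, your formulation has the virtue of reducing everything to the strict comparison already available in Theorem~\ref{thcompar}.
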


\begin{proof}Without loss of generality,
we will suppose that $u$ is Lipschitz continuous. 

The case $\alpha = 0$ is quite standard, and is done in \cite{BDL2}, for the sake of shortness we do not reproduce it here.
\medskip

For the case $\alpha \neq 0$ and $f<0$,
we use the change of function $u = \varphi (z)$, $v = \varphi (w)$ 
with $$ \varphi(s) = -\gamma_1 ( \alpha+1) \log \left( \delta + e^{-\frac{s}{\alpha+1}}\right).$$
This function is  used in \cite{BB}, \cite{BM}, \cite{BP},  \cite{LP}, \cite{LPR}.

We choose $\delta$  
small enough in order that the range of $\varphi$  covers the ranges of $u$ and $v$.
The constant $\gamma_1$ will be chosen small enough depending only on $a$, $\alpha$, $\beta$, $\inf_\Omega (-f)$ and $|b|_\infty$; in this proof, any constant of this type will be called universal .  Observe 
that $\varphi^\prime>0$ while $\varphi^{\prime \prime} <0$. 
      Let $Z = \sum_i | \partial_i z |^{\alpha\over 2}  \partial_i z $, $W =  \sum_i | \partial_i w |^{\alpha\over 2}  \partial_i w $. 
In the viscosity sense,   $z$ and $w$  are respectively sub- and super- solution of 
         \begin{equation}\label{eq1}- F( \varphi^\prime (z)^{1+\alpha}\Theta_\alpha (\nabla z)  D^2 z\Theta_\alpha (\nabla z)  + \varphi^\prime(z)^{ \alpha} \varphi^{\prime \prime} (z) ( Z\otimes Z) )   +b(x) \varphi^\prime (z)^{ \beta} | \nabla z |^\beta -f \leq  0.
        \end{equation} 
      
                  \begin{equation}\label{eq1}- F(\varphi^\prime (w)^{1+\alpha}\Theta_\alpha (\nabla w)  D^2 w\Theta_\alpha (\nabla w) +\varphi^\prime(w)^{ \alpha}\varphi^{\prime \prime} (w)( W\otimes W) )    +b(x) \varphi^\prime (w)^{ \beta} | \nabla w |^\beta -f \geq  0.
        \end{equation} 
         We define 
  $$H(x, s, p) =  \frac{-a\varphi^{\prime \prime}(s)}{\varphi^\prime (s)} \sum_i |p_i|^{2+ \alpha} + b(x) \varphi^\prime (s)^{ \beta-\alpha-1} | p |^\beta + \frac{-f(x)}{\varphi^\prime(s) ^{ \alpha+1}}. $$
The point is to prove that  at $\bar x$, a maximum point  of $ z-w$, $\frac{ \partial H ( \bar x, s,p)}{\partial s} >0$ for all 
$p$. This will be sufficient to get a contradiction. 
A simple computation gives
           $$\varphi^\prime =\frac{ \gamma_1  e^{-\frac{s}{\alpha+1}}}{\delta + e^{-\frac{s}{\alpha+1}}},\ 
 \varphi^{\prime \prime} =\frac{ -\gamma_1\delta e^{-\frac{s}{\alpha+1}}}{(\alpha+1) ( \delta + e^{-\frac{s}{\alpha+1}})^2}.$$
Hence 
              $$\left(  \frac{ -\varphi^{\prime \prime}}{ \varphi^\prime } \right) ^\prime = \frac{\delta}{(\alpha +1)^2} 
              \frac{e^{-\frac{s}{\alpha+1}}}{(\delta + e^{-\frac{s}{\alpha+1}})^2}
   \ \mbox{i.e.}\  \left(  \frac{ -\varphi^{\prime \prime }}{\varphi^\prime } \right) ^\prime =  -\frac{\varphi^{ \prime \prime }}{( \alpha +1) \gamma_1}>0.$$
Differentiating $H$ with respect to $s$ gives: 
        $$ \partial _s H = a\sum_i |p_i|^{ \alpha+2}  \frac{ -\varphi^{\prime \prime }}{( \alpha + 1) \gamma_1} 
        + (-f)\frac{ -(\alpha+1) \varphi^{\prime \prime}}{( \varphi^\prime)^{ \alpha+2}} +b(x)  |p|^\beta ( \beta-\alpha-1)( \varphi^\prime )^{ \beta-\alpha-2} \varphi^{\prime \prime}. $$
Since $-\varphi^{ \prime \prime}$  is positive, we need to prove that 
         
$$K:=\frac{ a \sum_i|p_i|^{ \alpha+2}}{( \alpha+1) \gamma_1}+ (-f) {\frac{\alpha+1}{( \varphi^\prime )^{ \alpha+2} }} -|b|_\infty |p|^\beta ( \beta-\alpha-1) (\varphi^\prime )^{ \beta-\alpha-2}>0.$$
We start by treating the case $\beta<\alpha+2$.

Observe first that the boundedness of $u$ and $v$, implies that there exists universal  positive constants $c_o$ and $c_1$ such that 
$$\label{gammab}c_o\gamma_1\leq\varphi^\prime\leq c_1\gamma_1.
$$ 
Hence, it is easy to see that there exist three positive  universal constants $C_1^\prime,$ $C_i$, $i=2,3$  such that
     $$K>\frac{ C_1^\prime \sum_i |p_i|^{ \alpha+2}}{\gamma_1}+\frac{C_2}{\gamma_1^{\alpha+2}}-\frac{C_3(\sum_i |p_i|^2)^{\beta\over 2} }{\gamma_1^{\alpha+2-\beta}}.$$
     
      We now observe that since $\alpha >0$
       $C_1^\prime  \sum_i |p_i|^{\alpha+2} \geq C_1^\prime N^{-\alpha \over 2} (\sum_i p_i^2)^{\alpha+2 \over 2}:=C_1 (\sum_i p_i^2)^{\alpha+2 \over 2}= C_1 |p|^{ \alpha+2} $. 
We choose $\gamma_1=\min \left\{ 1, (\frac{C_3}{C_2})^\beta, (\frac{C_3}{C_1})^\frac{1}{\alpha+1-\beta}\right\}$.    With this choice of $\gamma_1$, 
for $|p|\leq 1$,
$$
\frac{ C_1 |p|^{ \alpha+2}}{\gamma_1}+\frac{C_2}{\gamma_1^{\alpha+2}}-\frac{C_3|p|^\beta}{\gamma_1^{\alpha+2-\beta}} \geq  \frac{C_2}{\gamma_1^{\alpha+2}}-\frac{C_3}{\gamma_1^{\alpha+2-\beta}} >0;
$$
while for $|p|\geq 1$,
$$\frac{ C_1 |p|^{ \alpha+2}}{\gamma_1}+\frac{C_2}{\gamma_1^{\alpha+2}}-\frac{C_3|p|^\beta}{\gamma_1^{\alpha+2-\beta}} \geq   \frac{(C_1) |p|^{ \alpha+2}}{\gamma_1}-\frac{C_3|p|^\beta}{\gamma_1^{\alpha+2-\beta}} >0.
$$
If $\beta = \alpha+2$, just take $\gamma_1 < \frac{a}{( \alpha+1) | b|_\infty}$. 
               
This gives  that for $\gamma_1$ small enough depending only on $\min (-f)$ , $\alpha$, $|b|_\infty$  and $\beta$ one has, for some universal constant $C$,
\begin{equation}\label{gamma}
\partial_s H( x, s, p)\geq C>0.
\end{equation}
We now conclude the proof of the comparison principle.  Suppose by contradiction that $\sup ( z-w) >0$.

We introduce 
$ \psi_j(x, y) = z(x)-w(y)-\frac{j}{2} |x-y|^2$;     , $$  (p_j, X_j)\in \overline{J}^{2,+} z(x_j),
                  \ (p_j, -Y_j)\in \overline{J}^{2,-} w(y_j),\ \mbox{ with}\ p_j=j(x_j-y_j) $$ 
 and 
$$\left( \begin{array}{cc} X_j & 0 \\0& Y_j\end{array} \right) \leq 3 j \left( \begin{array} {cc}I&-I\\-I&I\end{array}\right). $$
On $(x_j, y_j)$, by a continuity argument,  for $j$ large enough one has 
                $$ z(x_j) > w(y_j) +  \frac{\sup(z-w)}{2}.$$ 
Note for later purposes that since $z$ or $w$ are Lipschitz, $ p_j = j ( x_j-y_j)$ is bounded.
Observe that the monotonicity of $\frac{ \varphi^{ \prime \prime }}{\varphi^\prime}$ implies that 
$$N =p_j \otimes p_j \left( \frac{ \varphi^{ \prime \prime } (z(x_j))}{\varphi^\prime ( z(x_j))}- \frac{ \varphi^{ \prime \prime }(w(y_j))}{\varphi^\prime(w(y_j))}\right)\leq 0.$$ 
Using the fact that $z$ and $w$ are respectively sub and super solutions of the equation  (\ref{eq1}), the estimate  \eqref{gamma} and that $H$ is decreasing in the second variable,  one obtains:  
                       \begin{eqnarray*}
     0& \geq & \frac{ -f(x_j)}{\varphi^\prime (z(x_j))^{ \alpha+1} } 
                        -  F\left(p_j, X_j + \frac{ \varphi^{ \prime \prime } (z(x_j))}{\varphi^\prime ( z(x_j))}p_j \otimes p_j\right) + b(x_j)|p_j |^\beta \varphi^\prime ( z(x_j))^{\beta-\alpha-1} \\
     & \geq &\frac{ -f(x_j)}{\varphi^\prime (z(x_j))^{ \alpha+1} }  -F(p_j, -Y_j+  \frac{\varphi^{ \prime \prime } (w(y_j))}{\varphi^\prime ( w(y_j))} p_j \otimes p_j )  \\
   &&+   a \sum_i|(p_j)_i|^{2+ \alpha} \left( \frac{ \varphi^{ \prime \prime } (w(y_j))}{\varphi^\prime ( w(y_j))} -\frac{ \varphi^{ \prime \prime } (z(x_j))}{\varphi^\prime ( z(x_j))}\right)+  |p_j |^\beta  b(x_j)\varphi^\prime ( z(x_j))^{ \beta-\alpha-1}\\
    & \geq & \frac{f(y_j) -f(x_j)}{ \varphi^\prime ( w(y_j))^{ \alpha+1} }+ (b(x_j)-b(y_j)) |p_j |^\beta  \varphi^\prime ( w(y_j)))^{ \beta-\alpha-1}\\
&&+ H(x_j, z(x_j), p_j)- H(x_j, w(y_j),p_j)\\                        
 &\geq&C(z(x_j)-w(y_j))+  \frac{o(1)}{\gamma_1^{\alpha+1}}.          \end{eqnarray*}
Here we have used the continuity of $f$ and $b$, the boundedness of $p_j$ and  that 
$$\psi(x_j,y_j)\geq \sup (\psi(x_j,x_j), \psi(y_j, y_j)).$$ 
Passing to the limit one gets a contradiction, since $(x_j,y_j)$ converges to $(\bar x,\bar x)$ such that $z(\bar x)>w(\bar z)$.  
      \end{proof}
    Theorem \ref{compzero} enables us to pove,  arguing as in \cite{BDL2},  Theorem \ref{ABCDE}.


\begin{thebibliography}{99}
\bibitem{BB} G. Barles, J. Busca{ \em Existence  and  comparison results for fully  non linear degenerate  elliptic equations without zeroth order terms}, Communications in Partial Differential Equations , 26(11-12), (2001), 2323-2337.

\bibitem{BCI} G. Barles,  E. Chasseigne, C. Imbert, {\it H\"older continuity of solutions of second-order non-linear elliptic integro-differential equations} J. Eur. Math. Soc. 
{\bf 13} (2011),  1-26.
\bibitem{BM} G. Barles,  F. Murat, {\em Uniqueness and the maximum principle for quasilinear elliptic equations with quadratic growth conditions.} Archive for rational mechanics and analysis 133.1 (1995), 77-101.
\bibitem{BP} G.Barles, Porretta {\em Uniqueness  for unbounded solutions to stationary viscous Hamilton-Jacobi equation} Ann. Scuola  Norm. Sup Pisa, Cl Sci (5) , Bol V (2006), 107-136. 
\bibitem{BPT}G. Barles, Porretta, A, Tabet, {\em On the large time behavior of solutions of the Dirichlet problem for subquadratic viscous Hamilton-Jacobi equations}  Journal de math\'ematique pures et appliqu\'ees,   94, (2010), 497-519. 

\bibitem{BBr}  P. Bousquet, L.  Brasco,  {\em $\mathcal{C}^1$  regularity of orthotropic p-harmonic functions in the plane}. Anal. PDE 11 (2018), no. 4, 813-854.
\bibitem{BB2} P. Bousquet, L.  Brasco,{\em  Lipschitz regularity for orthotropic Functionals with non standard growth conditions} , Arxiv 1810. 0337v. 
 \bibitem{BBJ} P. Bousquet, L.  Brasco, V. Julin {\em  Lipschitz regularity for local minimizers of some widely degenerate problems}
 . Ann. Sc. Norm. Super. Pisa Cl. Sci. (5) 16 (2016), no. 4, 1235-1274.
\bibitem{BC} L. Brasco, G. Carlier, {\em On certain anisotropic elliptic equations arising in congested optimal transport: Local gradient bounds}  Adv. Calc. Var. 7 (2014), no. 3, 379-407.
\bibitem{BD1}I. Birindelli, F. Demengel \emph{First eigenvalue and Maximum principle for fully nonlinear singular
operators} ,   Advances in Differential equations, Vol
(2006) \textbf{11} (1), 91-119. 

\bibitem{BD2}    I. Birindelli, F. Demengel,\emph
      {Existence and regularity results for fully nonlinear operators on the model of  the  pseudo  Pucci's operators },  J. Elliptic Parabol. Equ. 2 (2016), no. 1-2, 171-187.
      \bibitem{BDcocv} I. Birindelli, F. Demengel, \emph{$\mathcal{C}^{1, \beta} $ regularity for Dirichlet problems associated to fully nonlinear degenerate elliptic equations}. ESAIM Control Optim. Calc. Var. 20 (2014), no. 4, 1009-1024. 
\bibitem{BDL1}I. Birindelli, F. Demengel, F. Leoni, \emph{Dirichlet problems for fully nonlinear  equations with "subquadratic" Hamiltonians}  Contemporary research in elliptic PDEs and related topics, 107–127, Springer INdAM Ser., 33, Springer, Cham, 2019.

\bibitem{BDL2} I. Birindelli, F. Demengel, F. Leoni, \emph{Ergodic pairs for singular or degenerate fully nonlinear operators}  ESAIM Control Optim. Calc. Var. 25 (2019), Art. 75.
\bibitem{BDL3} I. Birindelli, F. Demengel, F. Leoni, { \em On the $\mathcal{C}^{1, \gamma}$ regularity for Fully non linear  singular or degenrate equations  with  a subquadratic hamiltonian},    NoDEA Nonlinear Differential Equations Appl. 26 (2019), no. 5.

\bibitem{CDLP}I. Capuzzo Dolcetta, F. Leoni, A. Porretta, {\em H\"older's estimates for degenerate elliptic equations with coercive Hamiltonian}, Transactions of the american society, Vol 362, n$^o$9, September 2010, pp. 4511-4536. 

\bibitem{I} {H. Ishii},  \emph{ Viscosity solutions  of fully nonlinear equations},  Sugaku Expositions , Vol 9, number 2, December 1996. 
\bibitem{usr} { M.G. Crandall,H. Ishii, P.L. Lions} {\em User's guide to
viscosity solutions of second order partial differential equations}. Bull.
Amer. Math. Soc. (N.S.) 27 (1992), no. 1, 1--67.
\bibitem{D1}F. Demengel, {\em Lipschitz interior  regularity for the  viscosity  and weak solutions of the Pseudo $p$-Laplacian Equation.}
Advances in Differential equations, Vol. 21, Numbers 3-4, March April 2016. 


 \bibitem{D2} F.  Demengel  : {\em Regularity properties  of Viscosity Solutions for Fully Non linear Equations  on the model of the anisotropic $\vec p$Laplacian}.  
 Asymptotic Analysis, vol. 105, no. 1-2, pp. 27-43, 2017. 
 

 \bibitem{FFM} I. Fonseca, N. Fusco, P. Marcellini,   {\em An existence result for a non convex variational problem via regularity}, ESAIM: Control, Optimisation and Calculus of Variations, Vol. 7, ( 2002),  p 69-95.  
\bibitem{IL}{H. Ishii, P.L. Lions}, {\it  Viscosity solutions of Fully-Nonlinear Second  Order Elliptic Partial Differential Equations}, {J. Differential Equations},  \textbf{83},  (1990), 26--78.
\bibitem{LL} J.M. Lasry, P. L. Lions {\em Nonlinear Elliptic Equations with Singular Boundary Conditions and Stochastic Control with state Constraints, } Math. Ann. 283,  (1989), 583-630. 
 \bibitem{LP}T. Leonori, A. Porretta  {\em Large solutions and gradient bounds for quasilinear elliptic equations}, Comm. in Partial Differential Equations 41, 6 (2016), 952-998.
\bibitem{LPR} T. Leonori, A. Porretta, G. Riey,  {\em Comparison principles for p-Laplace equations with lower order terms}, Annali di Matematica Pura ed Applicata 196(3), (2017), 877-903. 
\bibitem{LR} P. Lindqvist, Ricciotti   {\em Regularity for an anisotropic equation in the plane},  Non Linear Analysis, TMA, Vol.177, part B, 2018,  628-636.        
\bibitem{P} A. Porretta, {\em The ergodic limit for a viscous Hamilton- Jacobi equation with Dirichlet conditions},  Rend. Lincei Mat. Appl. 21 (2010). 

\bibitem{UU} N. Uraltseva, N. Urdaletova, {\em The boundedness of the gradients of generalized solutions of degenerate  quasilinear nonuniformly elliptic equations,} Vest. Leningr. UniV. Math, 16, (1984), 263-270, 2,3.  
\end{thebibliography}
          \end{document}